\def\BibTeX{{\rm B\kern-.05em{\sc i\kern-.025em b}\kern-.08em
    T\kern-.1667em\lower.7ex\hbox{E}\kern-.125emX}}
\newtheorem{prop}{Proposition}
\newtheorem{theorem}{Theorem}
\newtheorem{lemma}{Lemma}
\newtheorem{coro}{Corollary}
\newtheorem{assump}{Assumption}
\newtheorem{definition}{Definition}
\newtheorem{example}{Example}
\theoremstyle{remark}
\newtheorem{remark}{Remark}
\newcommand{\dist}{\textup{dist}}
\newcommand{\lmin}{\lambda_{\min}}
\newcommand{\lmax}{\lambda_{\max}}
\newcommand{\tr}{\textup{trace}}
\newcommand{\bR}{\mathbb{R}}
\newcommand{\diag}{\textup{diag}}
\newcommand{\MK}{M^{(H)}}
\newcommand{\HK}{J^{(H)}}
\newcommand{\LK}{L^{(H)}}
\newcommand{\LambdaK}{\mathbf\Lambda^{(H+1)}}
\newcommand{\BK}{\mathbf{B}^{(H)}}
\newcommand{\RK}{\mathbf{R}^{(H)}}
\newcommand{\QK}{\mathbf{Q}^{(H)}}
\newcommand{\Ftrunc}{K_{\textup{trunc}}^{\kappa}}
\newcommand{\AK}{{\bf A}_k^{(H+1)}}
\newcommand{\XiK}{\mathbf{\Xi}^{(H)}}
\newcommand{\SK}{\mathbf{S}^{(H)}}
\newcommand{\gammasys}{\gamma_{\textup{sys}}}
\newenvironment{talign*}
 {\csname align*\endcsname}
 {\endalign}
\title{On the Optimal Control of Network LQR with Spatially-Exponential Decaying Structure
}
\author{Runyu (Cathy) Zhang, Weiyu Li, Na Li 
\thanks{The work is
supported by NSF AI institute: 2112085, ONR YIP: N00014-19-1-2217, NSF CNS: 2003111. The authors are with the John A. Paulson School of Engineering and Applied Sciences, Harvard University (e-mail: runyuzhang@fas.harvard.edu, weiyuli@g.harvard.edu, nali@seas.harvard.edu). (Corresponding author Runyu (Cathy) Zhang)}}
\begin{document}

\maketitle

\begin{abstract}%
This paper studies network LQR problems with system matrices being spatially-exponential decaying (SED) between nodes in the network. The major objective is to study whether the optimal controller also enjoys a SED structure, which is an appealing property for ensuring the optimality of decentralized control over the network.  We start with studying the open-loop asymptotically stable system and show that the optimal LQR state feedback gain $K$ is `quasi'-SED in this setting, i.e. $\|[K]_{ij}\|\sim O\left(e^{-\frac{c}{\textup{poly}\ln(N)}\dist(i,j)}\right)$. The decaying rate $c$ depends on the decaying rate and norms of system matrices and the open-loop exponential stability constants. Then the result is further generalized to unstable systems under a stabilizability assumption (Assumption \ref{assump:stabilizability}). Building upon the `quasi'-SED result on $K$, we give an upper-bound on the performance of $\kappa$-truncated local controllers, suggesting that distributed controllers can achieve near-optimal performance for SED systems. We develop these results via studying the structure of another type of controller, \emph{disturbance response control}, which has been studied and used in recent online control literature; thus as a side result, we also prove the `quasi'-SED property of the optimal disturbance response control, which serves as a contribution on its own merit.

\end{abstract}

\section{Introduction}
Multi-agent systems has been actively studied in recent years. In many real-life applications such as robotic swarms \cite{liu2018}, power grids \cite{pipattanasomporn2009multi}, smart buildings \cite{zhang16decentralized} etc., the system consists of a group of interactive agents whose dynamics are affected by each other, especially its local neighbors. In general, especially for large scale systems, because of the limited communication among agents, agents needs to take control actions only  based on local observations, motivating the study of distributed learning and control synthesis \cite{witsenhausen1968counterexample, saeks1979decentralized, d2003distributed,rotkowitz2005characterization, cogill2006approximate, rantzer2011distributed,tanaka2011bounded, feng2019exponential, li2021distributed}, where it seeks to find optimal distributed controllers that respect agents' local information constraints.




However, although for certain types of systems (such as quadratic invariant system \cite{rotkowitz2005characterization}) there exist efficient algorithms to find such optimal distributed controllers, not much is known about how the optimum within this information-constrained controller subset differs from the global optimal controller that could access to global information; that is, how much optimality we sacrifice by constraining the controller to be distributed. Answering this important question requires more detailed understanding of the structure of the global optimal controller. If the information pattern of the global optimal controller is distributed or close to distributed,  then it can be expected that the optimal distributed controller could achieve a near-optimal global performance as the global optimal controller. 


The work by \cite{Bamieh02} first proposes to study this question under the setting of spatially invariant systems, where they leveraged the fact that spatially invariant systems can be decoupled under Fourier transform to show that the optimal linear quadratic regulator (LQR) controller is a convolution operator whose kernel has exponentially decaying structure, suggesting that the impact of far-way agents on the optimal control strategy decays exponentially with distance. However, the spatially invariant assumption is relatively restrictive and only holds for graphs with special patterns such as grid or lattice. Later \cite{motee2008optimal} seeks to generalize the result to a broader class which they define as `spatially distributed' class. The paper aims to show that the optimal LQR control gain also lies in the same spatially distributed class given that the matrices $A,B,Q,R$ of the LQR problem are inside the spatially distributed class. However, there exist counter-examples (see e.g. \cite{curtain2009comments} and Section \ref{sec:preliminaries} of this paper) such that $A,B,Q,R$ are spatially distributed, while the optimal controller is not, suggesting that additional conditions are needed in order for the optimal controller to preserve spatially decaying structure.

 There is another line of literature that is related to characterizing the information pattern of optimal LQR controller. We can view the problem as  a purely linear algebraic problem, where the objective is to show whether the LQR operator ${\sf LQR}(A,B,Q,R)$ that calculates the optimal controller preserves the same information structure of the matrix inputs $A,B,Q,R$. There are many existing works in the field of matrix theory that study similar questions for different matrix operators $f$ \cite{Benzi07,Benzi15,haber2016sparse}, such as matrix exponential, matrix inverse and Lyapunov operators, etc. Typically these works try to understand the entrywise pattern of $f(A)$ given that $A$ is a banded or sparse matrix. As far as we know, existing literature mostly consider analytic $f$ which is a function on a single matrix $A$. It remains unclear how the results can be extended to LQR where the operator is a function on multiple system matrices $A,B,Q,R$.

\noindent\textbf{Our Contributions.}
In this paper we consider the standard infinite-horizon discrete time network LQR problem, where there are $N$ agents in the network and each agents has its own local state and local control actions. We focus on the case where $A,B,Q,R$ are spatially exponential decaying (SED, Definition \ref{def:SED}) and $A$ is exponentially stable, and show that the optimal LQR state feedback gain $K$ is `quasi'-SED in this setting, i.e. $\|[K]_{ij}\|\sim O\left(e^{-\frac{c}{\textup{poly}\ln(N)}\dist(i,j)}\right)$ (Theorem \ref{theorem:spatial-decay-F}). The rate $c$ is written out explicitly in terms of the SED rate and norms of system parameters $A,B,Q,R$ and the exponential stability of $A$. Our result can also be extended to the case where matrix $A$ is unstable under the SED stabilizable assumption (Assumption \ref{assump:stabilizability}). As far as we know, we are the first that give concrete decaying rate analysis on the optimal controller for LQR problem with spatially decaying structure. Our result not only gives an answer to the theoretical question of whether spatially distributed LQR also obtains a spatially distributed optimal controller, but also sheds light on how the decaying rates depends on different factors, which provides insights in real applications on how to design the pattern of the information constraints so that the distributed controller can approximate the global optimal as much as possible. 

Building upon the `quasi'-SED result on $K$, we give an upper-bound on the performance of $\kappa$-truncated local  controllers, suggesting that for SED systems, agents can achieve $\epsilon$-optimal performance by only knowing the state information of their $O(\textup{poly}\ln(N)\ln(1/\epsilon))$-neighbors.

Additionally, our proof approaches the problem via \emph{disturbance response parameterization}, thus as a side result, we also prove the `quasi'-SED property of the optimal disturbance response controller. Given that disturbance response controller has its own advantage compared with state feedback controller and is gaining more and more attention in the online learning and control community \cite{simchowitz2020improper,li2021safe,agarwal2019online,agarwal2019logarithmic,goulart2006optimization}, we also believe that it is a contribution on its own merit.%

The work that is the most relevant to our paper is one recent Arxiv preprint \cite{shin2022near}, where they proved the spatially exponential decaying property of the optimal controller $K$ for network LQR problem where the matrix $A$ is sparse and $B,Q,R$ are block-diagonal. Our work differs from theirs in the following aspects: (i) the setting considered in this paper is broader compared with \cite{shin2022near} (ii) The proof techniques are very different. Results in \cite{shin2022near} are derived by leveraging KKT condition, while our proofs are mainly based on disturbance response parameterization. As a result, we also characterize the spatial decaying structure for disturbance response controllers, which is not considered in \cite{shin2022near}.


\noindent\textbf{Notations:} Throughout the paper, we use $\|\cdot\|$ to denote the $\ell_2$ norm of a vector as well as the induced $\ell_2$ norm of a matrix. $\lmax(X), \lmin(X)$ denotes the maximum and minimum eigenvalue of a square matrix $X$ respectively.

\section{Problem Settings and Preliminaries}\label{sec:preliminaries}
We consider the infinite-horizon discrete time network linear quadratic regulator (LQR) problem with $N$ agents $[N]= \{1,2,\dots,N\}$ that are embedded in a network. We assume that there's a distance function defined on the $N$-agent network, i.e. $\dist(\cdot,\cdot): [N]\times[N]\rightarrow \mathbb{R}^{\ge 0}$, such that $\dist(i,j) = \dist(j,i)$, and that triangle inequality holds $\dist(i,j)\le \dist(i,k) + \dist(k,j),~\forall i,j,k\in [N]$. For example, if the $N$ agents are embedded on an un-directed graph, then $\dist(i,j)$ can be taken as the graph distance, which is the length of the shortest path from agent $i$ to $j$. Without further explanation,  $\dist(\cdot,\cdot)$ refers specifically to the \textit{graph distance} throughout the paper, but our results also hold for other types of distances such as Euclidean distance. 

At time step $t$, each agent $i$ has its local state $x_t^i\in \bR^{n_x^i}$ and local control action $u_t^i\in \bR^{n_u^i}$. We use $x_t\in \bR^{n_x}, u_t\in \bR^{n_u}$ to denote the joint state and control action of the $N$ agents, i.e., $n_x = \sum_{i=1}^N n_x^i, n_u = \sum_{i=1}^N n_u^i$ and
\begin{align*}
   \textstyle x_{t} = [x_{t}^{1}; x_{t}^{2}; \cdots; x_t^{N}],\quad 
    u_t = [u_t^{1}; u_t^{2\top}; \cdots; u_t^{N}],
\end{align*}
The dynamic of agent $i$'s state $x_t^i$ is governed by the following linear equation
\begin{equation}\label{eq:dynamic-x-i}
    \textstyle x_{t+1}^i= \sum_{j\in [N]}[A]_{ij}x_{t}^j + [B]_{ij}u_t^j.
\end{equation}
Here the submatrix notation $[X]_{ij}$ for a matrix $X$ denotes the submatrix of $X$ where its row indexes correspond to the role indexes of agent $i$ and its column indexes correspond to the indexes of agent $j$.\footnote{By indexes of agent $i$, we mean that, if the total index length is $n_x$, then the indexes of agent $i$ is of range $\big[\sum_{j=1}^{i-1} n_x^i+1, \sum_{j=1}^{i} n_x^i\big]$. And the same definition also applies for total index length of $n_u$.}

Each agent $i$ also has its local stage cost $$\textstyle c_t^i:=\sum_{j\in[N]}x_t^{i\top} [Q]_{ij}x_t^j + u_t^{i\top} [R]_{ij}u_t^j + 2u_t^{i\top} [S]_{ij}x_t^j$$
which is a quadratic function on $x_t$ and $u_t$. The total stage cost is the summation of all local costs, i.e.,
\begin{align*}
    &\textstyle x_t^\top Qx_t + u_t^\top Ru_t + 2 u_t^\top S x_t = \\
    &\textstyle \qquad \sum_{i,j=1}^N x_t^{i\top} [Q]_{ij}x_t^j + u_t^{i\top} [R]_{ij}u_t^j + 2u_t^{i\top} [S]_{ij}x_t^j.
\end{align*}


Using the system matrices $A,B$ and the cost matrices $Q,R,S$, the network LQR problem is same as the classical LQR formulation shown as below,
\begin{equation}\label{eq:LQR}
\begin{split}
 \min_{\{u_t\}_{t=0}^{\infty}}&\lim_{T\to\infty} \frac{1}{T}\mathbb{E}\sum_{t=0}^{T-1}x_t^\top Q x_t + u_t^\top Ru_t + 2 u_t^\top S x_t\\
 s.t.~&~ x_{t+1} = Ax_t + Bu_t + w_t, ~~w_t\sim\mathcal{N}(0,I).
\end{split}
\end{equation}
Throughout the paper, we make the following assumption on the LQR problem.
\begin{assump}\label{assump:LQR-controllability}
$Q\succ 0, ~~R - SQ^{-1}S^\top \succ 0$.
\end{assump}

The solution to problem \eqref{eq:LQR} is known to be solved by a linear state feedback controller
\vspace{-5pt}
\begin{equation*}
    u_t = -Kx_t\vspace{-10pt}
\end{equation*}
where 
\vspace{-5pt}
\begin{equation}\label{eq:F-expr}
    \textstyle K = (R+B^\top P B)^{-1}(B^\top P A+S).
\end{equation}
Here, the cost-to-go matrix $P$ is the solution to the algebraic Ricatti equation
\begin{equation}\label{eq:ricatti-eq}
    \textstyle P \!=\! A^\top P A - (A^\top PB+S^\top)(R + B^\top PB)^{-1}(B^\top PA+S) + Q.
\end{equation}
Without any special structure of the matrices $A,B,Q,R,S$, the controller $K$ is often a ``dense'' matrix in the sense that for each agent $i$, each $[K]_{ij}$ plays an important role, meaning that for each controller $u^i$, it would need the global state information $x_t^1,\cdots,x_t^N$ of the system. In this paper, we would like to study the information pattern of the global optimal $K$ for special classes of system matrices $A,B, Q,R,S$ and hope the corresponding optimal $K$ would have some structure (close to) being distributed/local which, hence, will ensure near-optimal performance of optimal distributed/local controllers.

\subsection{Spatially exponential decaying structure} 
In this paper, the special class of network LQR problem we consider are the problems with matrices $A,B,Q,R,S$ satisfying the following special decaying property. 
\begin{definition}[Spatially exponential decaying (SED)]\label{def:SED}
A matrix $X\in\bR^{n'\times n''}$ ($n',n'' = n_x$ or $n_u$) is $(c, \gamma)$-spatially exponential decaying (SED) if
\begin{equation*}
\textstyle    \|[X]_{ij}\|\le c\cdot e^{-\gamma \dist(i,j)}, ~~\forall~ i,j\in [N].
\end{equation*}
\end{definition}

We make the following assumption. 
\begin{assump}[SED system]\label{assump:exponential-decay}
There exists $\gammasys  >0$ and constant $a,b,q,r,s>0$ such that $A,B,Q,R,S$ are $(a,\gammasys ), (b,\gammasys ), (q,\gammasys ), (r,\gammasys ), (s,\gammasys )$-SED respectively. Here for convenience, we assume without loss of generality that
$a,b,q,r\ge 1$.
\end{assump}

Assumption \ref{assump:exponential-decay} implies that both the system dynamics and the quadratic cost are \textit{decoupled} across agents in the sense that if $\dist(i,j)$ is large, then $x_j, u_j$ have exponentially small effect on the dynamics of $x_i$ and on the cost of $x_i$.

Below we give one system example that satisfy the SED structure. In Section~\ref{subsec:numerical-practical}, we also provide two practical examples with SED structure. More motivation of studying SED structure is also provided in Appendix \ref{apdx:why-SED}.

\begin{example}[Heat equation on the cyclic graph $\mathbb{Z}_N$]\label{example:circle}
Consider the case where the $N$ agents are located on the cyclic graph $\mathbb{Z}_N$, where agent $i$ is only connected with $i-1$ and $i+1$ (modular $N$). The heat equation dynamic is defined as 
\begin{equation*}
     \textstyle x_{t+1}^i = x_t^i + \eta\left(-2x_t^i + x_t^{i+1} + x_t^{i-1}  + b_i u_t^i\right),
\end{equation*}
thus $A = I-\eta L$, where $L$ is the graph Laplacian of $\mathbb{Z}_N$, and $B = \eta \diag(\{b_i\}_{i=1}^N)$ is a diagonal matrix whose diagonal entries are $\eta b_i$'s.
The control objective is to let the state vector goes to zero while keeping the control energy low, which can be modeled as
\begin{equation}\label{eq:heat-discrete}
    \textstyle \min \lim_{T\to\infty}\frac{1}{T} \sum_{t=1}^T\sum_{i=1}^N q_i {x_t^i}^2 + r_i {u_t^i}^2,
\end{equation}
thus $Q, R$ matrices in this setting are $Q = \diag(\{q_i\}_{i=1}^N)$, $R = \diag(\{r_i\}_{i=1}^N)$.
In this example with small $\eta<1$, we can set $\gammasys = \ln(\frac{1}{\eta})$, $a=1$, $b=\max\{\eta\max_i b_i,1\}$, $q=\max\{\max_i q_i,1\}$ and $r=\max\{\max_i r_i,1\}$ in Assumption \ref{assump:exponential-decay}. \qed
\end{example}

As mentioned in the introduction, one major motivation of our work is to answer the important question of whether an SED system that satisfies Assumption \ref{assump:exponential-decay} still obtains the optimal LQR control gain $K$ which is also SED. Unfortunately, this is not necessarily true without additional conditions, as shown in the counter-example below.

\subsection{A Counter-example}
We can construct the following example, where $A,B,Q,R,S\in \mathbb{R}^{N\times N}$ which are all SED, whereas the optimal control gain $K$ is not.
\vspace{-10pt}
\begin{align}\label{eq:counter-example}
    \!\!\!\!A \!=\! 1.1 I, ~Q \!=\!R\! = \!I,~ S \!=\! 0,~
    B \!=\!\! 
    \begin{bmatrix}
         1& 1 &&& \\
         & 1 & 1&&\\
         &  &\ddots&\ddots&\\
         & & & 1 & 1\\
         & & &  & 1
    \end{bmatrix}\!\!.
\end{align}
 Since $A,Q,R$ are scalar matrices, and $B$ is a banded matrix, clearly they are SED matrices with respect to the graph distance defined by $\mathbb{Z}_N$, hence the above system satisfies Assumption \ref{assump:exponential-decay}.
\vspace{-8pt}
\begin{figure}[htbp]
    \centering
    \begin{minipage}{.40\linewidth}
    \includegraphics[width=.9\textwidth]{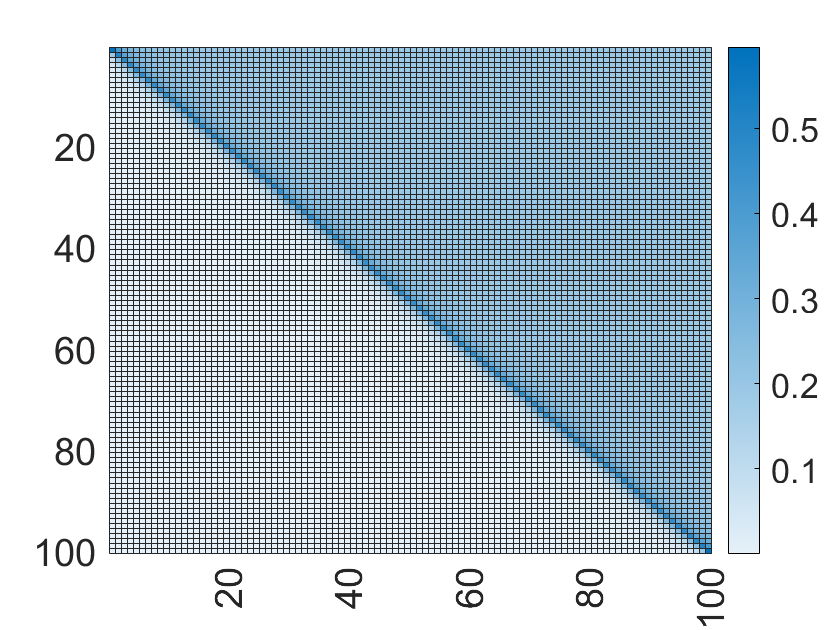}
    \vspace{-5pt}
    \caption{\small Heatmap of $\text{abs}(K)$. An entry is closer to zero if its color is whiter.}
    \label{fig:counter-example-1}
    \end{minipage}
    \hspace{20pt}
    \begin{minipage}{.40\linewidth}
    \includegraphics[width=.9\textwidth]{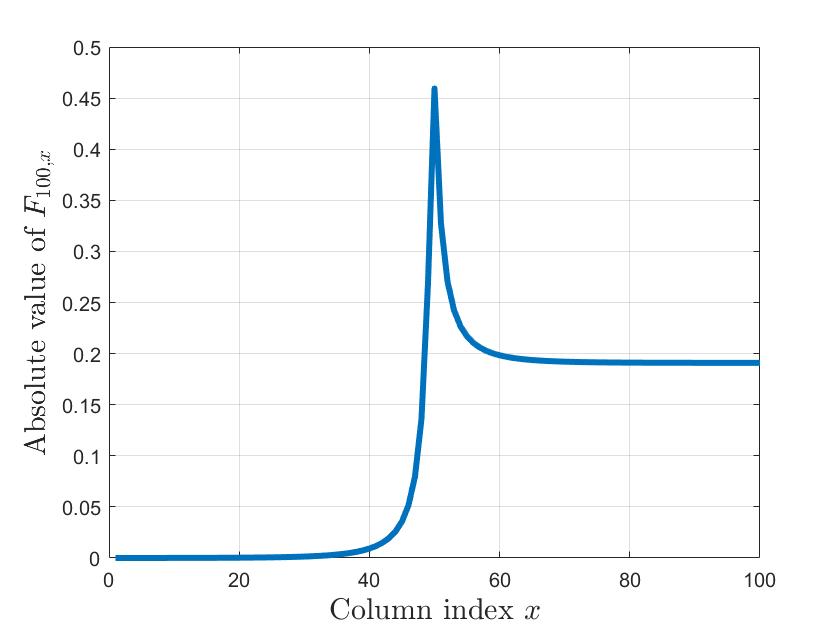}
    \vspace{2pt}
    \caption{\small Entries of the $50$-th row of $\textup{abs}(K)$.}
    \label{fig:counter-example-2}
    \end{minipage}
\end{figure}
We pick $N=100$ and directly call the {\sf dlqr} function in MATLAB to solve for the optimal controller. To visualize the structure of the optimal controller $K$, Figure \ref{fig:counter-example-1} plots the heatmap of the absolute value of entries in $K$; Figure \ref{fig:counter-example-2} plots the absolute value of the entries of a row of $K$. From the two figures we can see that the upper-triangular entries have non-negligible absolute values, implying that $K$ is not SED. Thus this serves as a counter-example showing that the optimal controller $K$ is not SED even if $A,B,Q,R, S$ are SED.

\section{Main Results}\label{sec:main-result}
The previous counter-example suggests that additional condition is needed in order for $K$ to also be SED. In this section, we first look into the open-loop exponential stable system (Assumption \ref{assump:A-exponential-stable}) and then extend our results to unstable systems, but with an additional assumption on SED stabilizability (Assumption \ref{assump:stabilizability}). Lastly, we illustrate how our results ensure a small optimality gap between  the `$\kappa$-truncated local controllers' and the global optimal controller $K$. 

\subsection{For open-loop exponentially stable systems}\label{sec:open-loop-stable}
In the counter-example in Equation~\eqref{eq:counter-example}, if we replace $A = 1.1I$ to $A = 0.9I$, the optimal control gain $K$ immediately becomes a SED matrix (See more details in Section \ref{sec:toy-example}, figure \ref{fig:counter-example-stable-heatmap}). This observation suggests that stability indeed plays an important role in the decaying structure of $K$, and inspire us to first consider the open-loop stable systems. We focus on systems that are asymptotically stable, which is equivalent to exponentially stable for linear systems \cite{anderson2007optimal}. 
To characterize the exact decaying rate of the optimal controller $K$, we provide the following definition of exponential stability. 
\begin{definition}[$(\tau,e^{-\rho})$-stability] For $\tau\ge1, \rho>0$, we define a matrix $A$ as $(\tau, e^{-\rho})$-stable if
    $\|A^k\|\le \tau e^{-\rho k}.$
\end{definition}

\begin{assump}[Open-loop Stability]\label{assump:A-exponential-stable}
The linear system considered in \eqref{eq:LQR} is open-loop asymptotically stable.
\end{assump}

Additionally, for LQR problems with Assumption~\ref{assump:LQR-controllability} and \ref{assump:A-exponential-stable}, the optimal $K$ always asymptotically stabilizes the system (c.f. \cite{anderson2007optimal}). Thus without loss of generality we assume that there exist some $\tau\ge1, \rho>0$ such that both $A$ and $A-BK$ are $(\tau, e^{-\rho})$-stable, i.e.
\begin{equation}\label{eq:open-closed-loop-stable}
    \textstyle \|A^k\|, \|(A-BK)^k\|\le \tau e^{-\rho k}.
\end{equation}

With Assumption \ref{assump:A-exponential-stable}, we are able to prove the following theorem. For compactness, here we only state the main result and briefly discuss the implication. The intuition and proof sketches are provided in the later sections. The full proof can be found in Appendix \ref{apdx:proof-SED-F}.
\begin{theorem}\label{theorem:spatial-decay-F}
Under Assumption \ref{assump:LQR-controllability}, \ref{assump:exponential-decay}, and \ref{assump:A-exponential-stable}, the optimal control gain $K$ for problem \eqref{eq:LQR} is $(c_K,\gamma_K)$-SED, where 
\begin{align*}
    c_K &\lesssim O\bigg(\textup{poly}\Big(N, \tau,\frac{1}{1-e^{-2\rho}}, \frac{1}{\lmin(R\!-\!SQ^{-1}S^\top)},\\
    &\qquad \qquad\qquad\qquad\|B\|,\|Q\|,\|S\|,\|K\|, b, q, s\Big)\bigg), \\
    \gamma_K &\gtrsim  \gammasys \rho\bigg\slash O\bigg(\textup{poly}\Big(\tau,\frac{1}{1-e^{-2\rho}},\frac{1}{\lmin(R\!-\!SQ^{-1}S^\top)},\\
    &\qquad\!\!\lmax(R),\|B\|,\|Q\|,\|S\|,\ln\left\{N,a,b,q,r,s,\gammasys \right\}\Big)\bigg)\!,
\end{align*}
where $\tau,\rho$ are defined as in \eqref{eq:open-closed-loop-stable}.
\end{theorem}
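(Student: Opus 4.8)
The plan is to prove Theorem~\ref{theorem:spatial-decay-F} via the \emph{disturbance-response parameterization}, which is available because Assumption~\ref{assump:A-exponential-stable} makes $A$ Hurwitz. The optimal policy $u_t=-Kx_t$ of \eqref{eq:LQR} is realized in disturbance-response form by the coefficients $M^{(k)}=-K(A-BK)^{k-1}$, $k\ge1$, so $K=-M^{(1)}$ and it suffices to show that $M^{(1)}$ is quasi-SED. I would work with the horizon-$H$ truncation of the disturbance-response problem, allowing only $M^{(1)},\dots,M^{(H)}$ to be nonzero; this is a strongly convex quadratic program in finitely many matrix variables, with minimizer $\MK=(M^{(H),1},\dots,M^{(H),H})$. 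Using the stability bounds \eqref{eq:open-closed-loop-stable} one shows $\|M^{(H),1}+K\|$ decays geometrically in $H$, with a prefactor polynomial in $\tau$, $\|K\|$, $1/(1-e^{-2\rho})$, $\|Q\|$, $\lmax(R)$ and $1/\lmin(R-SQ^{-1}S^{\top})$ (the last because strong convexity converts the cost-suboptimality of the truncated-then-tail-dropped optimal policy into a distance bound). Hence it is enough to prove a spatial-decay bound for the first block $M^{(H),1}$ that is \emph{uniform in $H$}, and then let $H\to\infty$.

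For the truncated problem I would write the minimizer in closed form. Stacking the state-response coefficients as $\mathbf N=\LambdaK+\BK\mathbf M$, where $\LambdaK$ collects the powers $\{A^{k-1}\}$ and $\BK$ is the block-lower-triangular convolution operator built from $\{A^jB\}$, and writing the averaged cost as $\tr(\mathbf N^{\top}\QK\mathbf N)+\tr(\mathbf M^{\top}\RK\mathbf M)+2\tr(\mathbf M^{\top}\SK\mathbf N)$ with $\QK,\RK,\SK$ block-diagonal, the normal equations give
\[
\MK=-\big((\BK)^{\top}\QK\BK+\RK+(\BK)^{\top}(\SK)^{\top}+\SK\BK\big)^{-1}\big((\BK)^{\top}\QK+\SK\big)\LambdaK .
\]
Two facts then drive everything. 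First, each building block — $\LambdaK,\BK,\QK,\RK,\SK$, and finite products thereof — inherits an SED estimate from Assumption~\ref{assump:exponential-decay}, where a product of two $(c,\gamma)$-SED matrices is $(c^2N,\gamma)$-SED: the SED \emph{rate} is unchanged because $\dist(i,\ell)+\dist(\ell,j)\ge\dist(i,j)$, and only the \emph{constant} picks up a factor $\le N$ from the at most $N$ intermediate indices; in particular $A^{j}$ is $(a^jN^{j-1},\gammasys)$-SED. Second, the Gram matrix $(\BK)^{\top}\QK\BK+\RK+(\BK)^{\top}(\SK)^{\top}+\SK\BK$ equals $\Phi^{\top}\mathsf{C}\,\Phi$, where $\mathsf{C}$ is the joint stage-cost matrix $[\,Q,\,S^{\top};\,S,\,R\,]$ and $\Phi$ stacks $\BK$ over $I$, so by Assumption~\ref{assump:LQR-controllability} it is bounded below by $\lmin(R-SQ^{-1}S^{\top})\,I$ and, using $\|\BK\|\le\tau\|B\|/(1-e^{-\rho})$ from \eqref{eq:open-closed-loop-stable}, bounded above by a polynomial in $\|B\|,\|Q\|,\|S\|,\lmax(R),\tau,1/(1-e^{-2\rho})$ — all uniformly in $H$.

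The technical heart is a lemma that converts these ingredients into a uniform quasi-SED estimate for $M^{(H),1}$ through one recurring device: balancing spatial decay against a ``$\textup{poly}(N)$-per-term'' blow-up in an operator series. Concretely, (i) for a matrix power series $Y=\sum_{j\ge0}\alpha_j A^j$ in which $A$ is both $(a,\gammasys)$-SED and $(\tau,e^{-\rho})$-stable, split the sum at a threshold $t^{\star}$ proportional to the target distance $d$; bound the $j\le t^{\star}$ terms using that $A^j$ is $(a^jN^{j-1},\gammasys)$-SED and the $j>t^{\star}$ terms using $\|A^j\|\le\tau e^{-\rho j}$, then optimize $t^{\star}$ to get quasi-SED at rate $\approx\gammasys\rho/(\rho+\ln(aN))$ with a constant that may grow polynomially in $N$ (since at large distances $t^{\star}$ can be as large as the graph diameter). (ii) The matrix inverse in the closed form is expanded as a scaled Neumann series $G^{-1}=\mu^{-1}\sum_{k\ge0}(I-G/\mu)^{k}$ with $\mu\asymp\lmax(G)$; by the conditioning bound $\|I-G/\mu\|\le1-\delta$ with $\delta\asymp\lmin(R-SQ^{-1}S^{\top})/\lmax(G)$, and $I-G/\mu$ is quasi-SED, so the same split-at-$t^{\star}$ argument applies and costs one further $\ln N$ factor in the denominator of the rate. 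Composing the bounded number of products and the single inversion in the closed form leaves a rate of the form $\gammasys\rho/\textup{poly}(\tau,1/(1-e^{-2\rho}),1/\lmin(R-SQ^{-1}S^{\top}),\lmax(R),\|B\|,\|Q\|,\|S\|,\ln\{N,a,b,q,r,s,\gammasys\})$ and a constant of the form $\textup{poly}(N,\tau,1/(1-e^{-2\rho}),1/\lmin(R-SQ^{-1}S^{\top}),\|B\|,\|Q\|,\|S\|,\|K\|,b,q,s)$; adding the geometric $H\to\infty$ truncation error from the first paragraph and letting $H\to\infty$ yields the stated $(c_K,\gamma_K)$-SED bound on $K=-M^{(1)}$.

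The step I expect to be the main obstacle is the composition bookkeeping: one must show that each matrix product degrades the decay rate only by a \emph{bounded} multiplicative factor — not by a further factor of $\ln N$ — so that iterating through the closed form does not collapse the rate; this rests on the fact that ``quasi-SED with rate $\gammasys/\textup{poly}\ln N$'' is closed under products (products preserve the rate outright and only inflate constants), so that only the finitely many inversions contribute $\ln N$ factors to the denominator of $\gamma_K$. Carrying the explicit polynomial dependence on $\tau$, $1/(1-e^{-2\rho})$, $1/\lmin(R-SQ^{-1}S^{\top})$ and the operator norms through every one of these steps — the products, the Neumann expansions, and the truncation limit — is precisely what produces the quantitative forms of $c_K$ and $\gamma_K$ in the theorem.
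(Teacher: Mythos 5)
Your outline follows essentially the same route as the paper's proof of Theorem~\ref{theorem:spatial-decay-F}: the disturbance-response (Youla) parameterization with a horizon-$H$ truncation, the normal equations expressing the optimizer as a Gram-matrix inverse applied to an SED right-hand side (Lemma~\ref{lemma:optimal-L-K}), SED estimates for the building blocks obtained by splitting the Lyapunov-type series $\sum_t (A^t)^\top Q A^{t+m}$ at a distance-dependent threshold (Lemma~\ref{lemma:spatially-decay-M-H}, rate $\gammasys\rho/(\rho+\ln(aN))$), the $H$-uniform eigenvalue bounds $\lmin(R-SQ^{-1}S^\top)I\preceq\MK\preceq\big(\lmax(R)+O(\tau^2(\|B\|^2\|Q\|+\|B\|\|S\|)/(1-e^{-2\rho})^2)\big)I$ (Lemma~\ref{lemma:bound-wM}), a scaled Neumann expansion of $(\MK)^{-1}$ balanced against the exponential blow-up of the SED constants of its partial sums (Lemma~\ref{lemma:main-lemma-spatially-decay-L}), and finally the transfer $\|K+\LK_1\|\lesssim e^{-H\rho}$ (Lemma~\ref{lemma:relationship-K-L}). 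Your Schur-complement argument for the lower eigenvalue bound and your geometric-in-$H$ approximation of $K$ by the first truncated coefficient are the same ingredients the paper uses.

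The one step that would fail as written is the plan to ``prove a spatial-decay bound for $M^{(H),1}$ that is uniform in $H$, and then let $H\to\infty$.'' The Neumann-series balancing cannot produce an $H$-uniform rate: the $t$-th partial sum is SED with constant of order $(c_M N H+1)^{t}$, because each block product sums over $H$ block columns, so after optimizing the truncation point the decay rate carries a factor $1/\ln(c_M N H+1)$, which degrades as $H$ grows and vanishes in the limit $H\to\infty$. The paper never takes this limit; instead it fixes $H=\lfloor\gammasys N\rfloor+1$, verifies that with this choice the truncation error $e^{-H\rho}$ is itself dominated by $e^{-\gamma_L^{(H)}\dist(i,j)}$, and absorbs $\ln H\approx\ln(\gammasys N)$ into the $\textup{poly}\ln$ denominator of $\gamma_K$ (this is precisely where the $\ln\gammasys$ in the theorem statement comes from). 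Replacing your final limiting step with this finite, $N$-dependent choice of $H$ makes your argument match the paper's proof.
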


\begin{remark}[Discussion on the SED rate $\gamma_K$]\label{rmk:SED-rate-validness} Since this paper considers the finite dimension problem, any matrix is $(c,\gamma)$-SED as long as $c$ is picked to be arbitrarily large and $\gamma$ arbitrarily close to zero. For example, for the cyclic graph $\mathbb{Z}_n$, if $\gamma_K = 1/N$, then Theorem \ref{theorem:spatial-decay-F} only gives that $\|[K]_{ij}\|$ is upper-bounded by $c_K e^{-\frac{\dist(i,j)}{N}}\ge c_K e^{-1}$, which is a non-negligible constant and thus the bound is invalid and not informative. Therefore in order for the SED rate to be valid, $\gamma_K$ should not `scale with $N$'. Intuitively this means that if we enlarge the dimension of the problem, these factors have upper/lower-bounds that do not increase/decrease accordingly. Hence we would like to emphasize that, Theorem \ref{theorem:spatial-decay-F} needs to be applied with caution. Before concluding that $K$ is SED, it is necessary to check that all factors in Theorem \ref{theorem:spatial-decay-F}, i.e. $\gammasys ,\rho,\tau, \lmin(R),\lmax(R),\|B\|,\|Q\|\dots$, do not scale with $N$. To make this more concrete, we calculate these factors explicitly for a toy example (Example \ref{example:toy}) in the next section.

Additionally, apart from these system factors, there's one term that always scales with $N$, which is the $O(\textup{poly}\ln(N))$ showing up in the denominator, making the bound worse as $N$ grows larger, thus more accurately speaking the decaying rate of the entries $[K]_{ij}$ is `quasi'-SED, i.e., $O\left(\exp\left({-\frac{c\cdot \dist(i,j)}{\textup{poly}\ln(N)}}\right)\right)$, which is slightly worse than exponential decay. It is currently unclear to us whether the $O(\textup{poly}\ln(N))$ factor is fundamental or a proof artifact. It remains future work to answer whether this term could be removed. However, we remark that the `quasi'-exponential bound is good enough because $\ln(N)$ grows very slowly with $N$, so as long as $\dist(i,j)\gtrsim O(N^\epsilon)$ given any fixed constant $\epsilon >0$, we can still get that $\|[K]_{ij}\|$ is close to zero since $\lim_{N\to+\infty} \frac{N^\epsilon}{\textup{poly}\ln(N)} = +\infty$. Thus the bound works for a wide range of systems, such as the cyclic graph $\mathbb{Z}_N$ in Example \ref{example:circle}, or lattice graph etc. More generally speaking, the bound is valid as long as the graph is not well-connected. \qed 
\end{remark}

\subsection{Extension to unstable systems}\label{sec:unstable-system}
Theorem \ref{theorem:spatial-decay-F} can be extended to the case where $A$ is unstable or marginally stable under an additional mild assumption on the stabilizability.
\begin{assump}[SED Stabilizability]\label{assump:stabilizability}
There exists a $K_0 \in \mathbb{R}^{n_u\times n_x}$ that is $(k_0, \gammasys )$-SED such that $A-BK_0$ is $(\tau, e^{-\rho})$-stable.
\end{assump}
\begin{coro}[of Theorem~\ref{theorem:spatial-decay-F}, extension to unstable case]\label{coro:spatial-decay-F-unstable-A}
Under Assumption \ref{assump:LQR-controllability}, \ref{assump:exponential-decay}, and \ref{assump:stabilizability}, and that the system is closed-loop stable, i.e. $A-BK$ is $(\tau,e^{-\rho})$-stable, the optimal control gain $K$ of problem \eqref{eq:LQR} is $(c_K,\gamma_K)$-SED, where 
\begin{align*}
    c_K &\!\lesssim\! O\!\bigg(\!\textup{poly}\Big(\!N\!, \tau\!,\frac{1}{1-e^{-2\rho}}, \frac{1}{\lmin(R\!-\!SQ^{-1}S^\top)},\frac{1}{\lmin(Q)},\\
    &\qquad \quad\|B\|,\|Q\|,\|R\|,\|S\|,\|K\|,\|K_0\|^2, b, q, r, s, k_0\Big)\bigg), \\
    \gamma_K &\!\gtrsim\!  \gammasys \rho\!\!\bigg\slash\!\! O\!\bigg(\!\textup{poly}\Big(\!\tau\!,\frac{1}{1\!-\!e^{\!-\!2\rho}},\!\frac{1}{\lmin\!(R\!-\!SQ^{-1}S^\top)},\!\frac{1}{\lmin\!(Q)},\\
    &\hspace{-12pt}\lmax\!(R)\!,\|B\|,\|Q\|,\|R\|,\|S\|,\|K_{\!0}\|^2\!\!,\ln\!\left\{\!N\!,a\!,b\!,q\!,r\!,s\!,k_0\!,\gammasys \!\right\}\!\!\Big)\!\!\bigg)\!.
\end{align*}
\end{coro}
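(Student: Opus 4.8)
The plan is to reduce Corollary~\ref{coro:spatial-decay-F-unstable-A} to Theorem~\ref{theorem:spatial-decay-F} through a \emph{pre-stabilization} change of variables. Let $K_0$ be the SED stabilizing gain of Assumption~\ref{assump:stabilizability} and introduce the new input $v_t := u_t + K_0 x_t$. Substituting $u_t = v_t - K_0 x_t$ turns \eqref{eq:LQR} into an equivalent LQR problem with dynamics $x_{t+1} = \tilde A x_t + B v_t + w_t$, where $\tilde A := A - BK_0$, and stage cost $x_t^\top \tilde Q x_t + v_t^\top \tilde R v_t + 2 v_t^\top \tilde S x_t$, where $\tilde Q := Q + K_0^\top R K_0 - K_0^\top S - S^\top K_0$, $\tilde R := R$, $\tilde S := S - R K_0$. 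Because $v\mapsto u=v-K_0x$ is a bijection between policies producing the same trajectory at the same cost, the two problems have the same optimal value, and if $\tilde K$ is the optimal gain of the transformed problem then $K = \tilde K + K_0$.

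Next I would check that the transformed problem satisfies the three hypotheses of Theorem~\ref{theorem:spatial-decay-F}. \emph{(Assumption~\ref{assump:LQR-controllability}.)} With $M := \left[\begin{smallmatrix} Q & S^\top \\ S & R\end{smallmatrix}\right]$ and $T := \left[\begin{smallmatrix} I & 0 \\ -K_0 & I\end{smallmatrix}\right]$ (invertible, $\sigma_{\min}(T)\ge (1+\|K_0\|)^{-1}$), a block computation gives $\left[\begin{smallmatrix}\tilde Q & \tilde S^\top \\ \tilde S & \tilde R\end{smallmatrix}\right] = T^\top M T$; since Assumption~\ref{assump:LQR-controllability} is equivalent to $M\succ 0$, the congruence yields $\tilde Q\succ0$ and $\tilde R-\tilde S\tilde Q^{-1}\tilde S^\top\succ0$. \emph{(Assumption~\ref{assump:exponential-decay}.)} Using that a sum of SED matrices is SED with the same rate and summed constants, and that a product of a $(c_1,\gammasys)$- and a $(c_2,\gammasys)$-SED matrix is $(Nc_1c_2,\gammasys)$-SED (triangle inequality for $\dist$), the matrices $\tilde A, B, \tilde Q, \tilde R, \tilde S$ are SED at rate $\gammasys$ with constants polynomial in $N,a,b,q,r,s,k_0$. \emph{(Assumption~\ref{assump:A-exponential-stable} and \eqref{eq:open-closed-loop-stable}.)} Here $\tilde A = A - BK_0$ is $(\tau,e^{-\rho})$-stable by Assumption~\ref{assump:stabilizability}, while the transformed closed loop is $\tilde A - B\tilde K = A - B(K_0+\tilde K) = A - BK$, which is $(\tau,e^{-\rho})$-stable by the hypothesis of the corollary; hence \eqref{eq:open-closed-loop-stable} holds for the transformed system with the same $\tau,\rho$.

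Applying Theorem~\ref{theorem:spatial-decay-F} to the transformed problem then gives that $\tilde K$ is $(c_{\tilde K},\gamma_{\tilde K})$-SED, where $c_{\tilde K},\gamma_{\tilde K}$ are the expressions in Theorem~\ref{theorem:spatial-decay-F} with $B,Q,R,S,K$ replaced by $B,\tilde Q,\tilde R,\tilde S,\tilde K$. To conclude I would translate these back: $\|\tilde Q\|,\|\tilde S\|\lesssim \mathrm{poly}(\|Q\|,\|R\|,\|S\|,\|K_0\|)$, $\|\tilde K\|\le\|K\|+\|K_0\|$, $\lmax(\tilde R)=\lmax(R)$; the curvature constant obeys $\tilde R-\tilde S\tilde Q^{-1}\tilde S^\top\succeq\lmin(T^\top M T)\,I\succeq\sigma_{\min}(T)^2\lmin(M)\,I$ (using that the Schur complement of the $(1,1)$ block of a PD matrix dominates $\lmin$ of the whole matrix), combined with $\sigma_{\min}(T)\ge(1+\|K_0\|)^{-1}$ and a standard lower bound (complete the square) on $\lmin(M)$ in terms of $\lmin(Q)$, $\lmin(R-SQ^{-1}S^\top)$, $\|S\|$. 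Since the logarithm of a polynomial in $N,a,b,q,r,s,k_0$ is $O(\ln\{N,a,b,q,r,s,k_0\})$, substituting these bounds into $c_{\tilde K},\gamma_{\tilde K}$ and using $K=\tilde K+K_0$ — a sum of two SED matrices, hence $(\max(c_{\tilde K},k_0),\min(\gamma_{\tilde K},\gammasys))$-SED — reproduces the claimed $c_K,\gamma_K$.

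The reduction itself is clean; the main obstacle is the constant-tracking in the middle step: verifying that the SED constants of $\tilde A,\tilde Q,\tilde S$ blow up only polynomially (the $N$ factors coming from the products $BK_0$ and $K_0^\top R K_0$) and, crucially, enter $\gamma_K$ only through logarithms, and establishing the quantitative lower bound on $\lmin(\tilde R-\tilde S\tilde Q^{-1}\tilde S^\top)$ so that the final rate can be stated in terms of $\lmin(Q)$ and $\lmin(R-SQ^{-1}S^\top)$ rather than their transformed analogues.
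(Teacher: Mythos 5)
Your proposal is correct and takes essentially the same route as the paper's own proof: pre-stabilize with the SED gain $K_0$ via $v_t=u_t+K_0x_t$, verify that the transformed problem $(\tilde A, B,\tilde Q,\tilde R,\tilde S)$ satisfies the hypotheses of Theorem~\ref{theorem:spatial-decay-F} (with $\tilde A-B\tilde K=A-BK$ supplying the closed-loop stability), apply that theorem, and pass back through $K=\tilde K+K_0$. Your explicit congruence $T^\top M T$ and the resulting lower bound on $\lmin(\tilde R-\tilde S\tilde Q^{-1}\tilde S^\top)$ (the source of the $1/\lmin(Q)$ and $\|K_0\|^2$ factors) simply spell out constant-tracking that the paper leaves implicit, and your $\tilde S=S-RK_0$ is the dimensionally consistent form of the paper's $\bar S$.
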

The proof of Corollary \ref{coro:spatial-decay-F-unstable-A} is deferred to Appendix \ref{apdx:proof-SED-F}. We would also like to note that the counter-example provided in Section \ref{sec:preliminaries} is not in conflict with Corollary \ref{coro:spatial-decay-F-unstable-A}, because we can show that it does not satisfy Assumption \ref{assump:stabilizability} (detailed verification see Appendix \ref{apdx:counter-example}). Here we provide one toy example to justify the usefulness of the result.

\begin{example}[Heat equation revisited]\label{example:toy}
We consider the same cyclic graph $\mathbb{Z}_N$ and the heat equation as in example \ref{example:circle}. For the sake of simplicity, here we set $b_i = q_i = 1, r_i = \alpha$, i.e.,
\vspace{-5pt}
\begin{align*}
&\textstyle \min \lim_{T\to\infty}\frac{1}{T} \sum_{t=1}^T\sum_{i=1}^N  {x_t^i}^2 + \alpha {u_t^i}^2,\\
     &\textstyle x_{t+1}^i =  x_t^i + \eta\left(-2x_t^i + x_t^{i+1} + x_t^{i-1}  + u_t^i\right),
\end{align*}
Thus $A = I-\eta L, B =\eta I, Q = I, R = \alpha I, S=0$, where $L$ is graph Laplacian of  $\mathbb{Z}_N$. From now on we assume that $\eta$ is small enough such that $A$ is a positive-semi-definite matrix, i.e., $\eta \le \frac{1}{4}$.

Note that $A$ is not exponentially stable as it has $1$ as its eigenvalue. Thus Theorem \ref{theorem:spatial-decay-F} is not directly applicable. Yet we could set $K_0 = I$, and $A-BK_0 = (1-\eta) I-\eta L \Longrightarrow \|A-BK_0\| \le e^{-\eta}$, i.e., $A-BK_0$ is $(1,e^{-\eta})$-stable, hence Assumption \ref{assump:stabilizability} is satisfied and Corollary \ref{coro:spatial-decay-F-unstable-A} can be applied.

When applying Corollary \ref{coro:spatial-decay-F-unstable-A}, we can set $\gammasys  = \ln(\frac{1}{\eta}), a=1, b=1, q=1, r=\max\{\alpha,1\}$ in Assumption \ref{assump:exponential-decay}, and set $\tau = 1$, $\rho = \eta$, $\|K_0\| = 1$, $k_0 = 1$ in Assumption \ref{assump:stabilizability}. Additionally, the system norms are bounded by $\|B\|=\eta, \|Q\|=1, \lmax(R) = \lmin(R)=\alpha$. Note that all these terms are constants that does not depend on the number of agents $N$, thus for any $\mathbb{Z}_N$, $\gamma_K$ in Theorem \ref{theorem:spatial-decay-F} is on the scale $\gamma_K \gtrsim \frac{c}{\textup{poly}\ln(N)}$, where $c$ is some constant that does not depend on $N$, which implies that the optimal $K$ for this example is indeed quasi-SED.\qed

\end{example}

\subsection{Performance of the truncated local controller}\label{sec:performance-truncated-controller}
The above results shed light on how well can distributed controllers approximate the global optimal performance for SED systems. This section looks into the performance of the $\kappa$-truncated local controller defined as follows:
\begin{equation*}
    [\Ftrunc]_{ij} = \left\{
    \begin{array}{ll}
        [K]_{ij} & \textup{if } \dist(i,j)\le \kappa-1 \\
         0& \textup{otherwise} 
    \end{array}
    \right..
\end{equation*}
The $\kappa$-truncated local controller is desirable for distributed control because each node only requires the state information from its $\kappa$-hop neighborhood to calculate its own control actions.
We measure the performance in terms of the LQR cost, i.e., for any exponentially stable controller $K'$, we can define its cost $C(K')$ as:
\vspace{-5pt}
\begin{equation*}
\begin{split}
C(K') :=\min_{\{u_t\}_{t=0}^{\infty}}&\lim_{T\to\infty} \frac{1}{T}\mathbb{E}\sum_{t=0}^{T-1}x_t^\top Q x_t + u_t^\top Ru_t + 2u_t^\top Sx_t\\
 s.t.~~ x_{t+1} &= Ax_t + Bu_t + w_t, ~~w_t\sim\mathcal{N}(0,I),\\
u_t &= K'x_t
\end{split}
\end{equation*}

\begin{theorem}[Appendix \ref{apdx:performance-truncated}]\label{theorem:performance-truncated} 
Assume that the optimal control gain $K$ is $(c_K, \gamma_K)$-SED, then for $\kappa \ge \frac{\ln\left(\frac{2\tau c_K\sqrt{N}\|B\|}{1-e^{-\rho}}\right)}{\gamma_K},$
we have that
\begin{equation*}
\begin{split}
&\textstyle\quad C(\Ftrunc) - C(K) \\
&\textstyle \le \frac{2\tau}{1-e^{-\rho}} \|R+B^\top P B\|\sqrt{N\min\{n_x,n_u\}} c_K e^{-\gamma_K\kappa},
\end{split}
\end{equation*}
where $P$ is defined in \eqref{eq:ricatti-eq}.
\end{theorem}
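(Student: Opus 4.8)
The plan is to bound the suboptimality of any stabilizing gain $K'$ relative to the optimal $K$ by a standard "cost difference / performance difference" identity, and then specialize to $K' = \Ftrunc$, controlling the perturbation $K - \Ftrunc$ via the $(c_K,\gamma_K)$-SED property. First I would establish that $\Ftrunc$ is itself stabilizing: since $\|K - \Ftrunc\|$ is small (each truncated block is of size $\le c_K e^{-\gamma_K \kappa}$, and summing over at most $N$ blocks and using $\|\cdot\| \le \sqrt{N}\max_{ij}\|[\cdot]_{ij}\|$-type bounds gives $\|K-\Ftrunc\| \le \sqrt{N}\, c_K e^{-\gamma_K\kappa}$ up to graph-dependent constants), the closed loop $A - B\Ftrunc = (A-BK) + B(K-\Ftrunc)$ is a small perturbation of the $(\tau,e^{-\rho})$-stable matrix $A-BK$; a Neumann-series / Gelfand-type argument shows it remains exponentially stable once $\frac{\tau}{1-e^{-\rho}}\|B\|\|K-\Ftrunc\| < 1$, which is exactly what the stated lower bound on $\kappa$ guarantees. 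This makes $C(\Ftrunc)$ finite and the performance-difference identity applicable.

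Next I would invoke the exact LQR performance-difference lemma: for a stabilizing $K'$, writing $\Delta = K' - K$, one has
\[
C(K') - C(K) = \tr\!\big(\Sigma_{K'}\, \Delta^\top (R + B^\top P B)\, \Delta\big),
\]
where $\Sigma_{K'}$ is the stationary state covariance of the closed loop under $K'$ (the gradient of the LQR cost vanishes at $K$, so the first-order term drops out and only the quadratic term survives; $P$ is the Riccati solution from \eqref{eq:ricatti-eq}). Then I bound the three factors: $\|\Sigma_{K'}\| \le \sum_{k\ge 0}\|(A-BK')^k\|^2 \le \frac{\tau'^2}{1-e^{-2\rho'}}$ using the closed-loop stability constants established in the previous step (with $\tau',\rho'$ close to $\tau,\rho$ for the $\kappa$ in the hypothesis), $\|\Delta\| = \|K - \Ftrunc\| \le \sqrt{N}\,c_K e^{-\gamma_K\kappa}$ from SED, and $\|R+B^\top P B\|$ appears directly. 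Combining via $\tr(XY) \le \|X\|\,\tr(Y)$ and $\tr(\Delta^\top M \Delta) \le \|M\|\,\|\Delta\|_F^2 \le \|M\|\min\{n_x,n_u\}\|\Delta\|^2$ yields a bound of the form $\frac{\tau^2}{1-e^{-2\rho}}\|R+B^\top P B\|\,N\min\{n_x,n_u\}\,c_K^2 e^{-2\gamma_K\kappa}$.

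The remaining work is to match this to the sharper stated bound, which is linear rather than quadratic in $c_K e^{-\gamma_K\kappa}$ and has $\frac{1}{1-e^{-\rho}}$ rather than $\frac{1}{1-e^{-2\rho}}$. I expect the main obstacle is precisely this sharpening: the naive quadratic bound is too lossy, so instead of treating $\Sigma_{\Ftrunc}$ as $O(1)$ and $\Delta$ as small, I would use the hypothesis $e^{-\gamma_K\kappa} \le \frac{1-e^{-\rho}}{2\tau\sqrt{N}\|B\|\,c_K}$ to show $\|A - B\Ftrunc\|^k$ is controlled with a modified constant, and more importantly exploit a tighter identity — e.g. comparing the cost-to-go operators directly, writing $C(\Ftrunc) - C(K)$ as a single infinite sum $\sum_{t\ge0}$ of the excess control cost along the $\Ftrunc$-trajectory and bounding each term by $\|B\|\|\Delta\|$ times the decaying trajectory norm, which produces one power of $e^{-\gamma_K\kappa}$ and a geometric series $\sum \tau e^{-\rho t} = \frac{\tau}{1-e^{-\rho}}$. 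The bookkeeping to get the exact constants $2\tau$, $\sqrt{N\min\{n_x,n_u\}}$, and $\|R+B^\top P B\|$ is routine once the right telescoping identity is chosen; I would defer the full computation to Appendix \ref{apdx:performance-truncated} as the statement indicates.
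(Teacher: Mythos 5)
Your route is essentially the paper's: (i) the SED hypothesis gives $\|K-\Ftrunc\|\le\sqrt{N}c_Ke^{-\gamma_K\kappa}$ and $\|K-\Ftrunc\|_F\le\sqrt{N\min\{n_x,n_u\}}\,c_Ke^{-\gamma_K\kappa}$; (ii) the condition on $\kappa$ makes $K-\Ftrunc$ small enough that $A-B\Ftrunc$ stays exponentially stable (the paper invokes Lemma B.1 of \cite{tu2019gap}, which is exactly your perturbation argument, and records the degraded constants $(\tau,\frac{1+e^{-\rho}}{2})$); (iii) the cost gap is controlled by the standard LQR cost-difference bound $C(K')-C(K)\lesssim \|R+B^\top PB\|\,\|K'-K\|_F^2$ (Lemma 12 of \cite{fazel2018global}, Lemma C.3 of \cite{krauth2019finite}), which is the $\tr\big(\Sigma_{K'}\Delta^\top(R+B^\top PB)\Delta\big)$ identity you wrote down.

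Where you stall, however, is a misdiagnosis rather than a missing idea, and your proposed fix (a telescoping identity ``producing one power of $e^{-\gamma_K\kappa}$'') is not how the proof closes. The bound quadratic in $\|K-\Ftrunc\|_F$ is \emph{stronger} than the stated linear bound, not weaker: under the hypothesis on $\kappa$ one has $\sqrt{N}c_Ke^{-\gamma_K\kappa}\le\frac{1-e^{-\rho}}{2\tau\|B\|}$, so the paper simply drops one factor of $\|K-\Ftrunc\|_F$ (implicitly treating it as at most one) and keeps a single power of $c_Ke^{-\gamma_K\kappa}$; no sharper first-order identity is available, since the gradient of $C$ vanishes at $K$ and the cost difference is genuinely second order in $K'-K$, so the theorem's linear form is just a relaxation of what you already have. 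Similarly, $\frac{2\tau}{1-e^{-\rho}}$ versus your $\frac{\tau^2}{1-e^{-2\rho}}$ is not a sharpening to be recovered: the cost-difference lemma must be applied with the truncated closed loop's decay factor $\frac{1+e^{-\rho}}{2}$ (your $\tau',\rho'$, which you should not silently replace by $\tau,\rho$ when bounding $\Sigma_{\Ftrunc}$), and then $1-\big(\frac{1+e^{-\rho}}{2}\big)^2\ge\frac{1-e^{-\rho}}{2}$ yields exactly the stated constant. With these two adjustments your argument finishes as in Appendix \ref{apdx:performance-truncated}; the deferred ``different identity'' is unnecessary.
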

Theorem \ref{theorem:performance-truncated} suggests that for SED systems that satisfies Assumption \ref{assump:A-exponential-stable} or \ref{assump:stabilizability}, in order to achieve $\epsilon$-optimal performance in terms of the LQR cost, we only need to set $\kappa \sim \textup{poly}\ln(N)\ln(1/\epsilon)$, i.e., each agent only need to know the information of its $\kappa$-hop neighbors, which is only a negligible proportion of the total number of agents (for graphs that are not well-connected), to achieve near-optimal performance.




\section{Proof Enabler: Disturbance Response Controller}\label{sec:disturbance-response}


The rest of the paper mainly focuses on providing insights and proof sketches of the main results, especially  Theorem \ref{theorem:spatial-decay-F}. One of the major technical difficulties is that the algebraic Ricatti equation \eqref{eq:ricatti-eq} is a \textit{nonlinear} matrix equation, and it is hard to verify how the spatially decaying structure is preserved given that the matrix coefficients $A,B,Q,R,S$ in \eqref{eq:ricatti-eq} are SED. This observation inspires us to think about the problem from a different perspective --- doing convex reparameterization of \eqref{eq:LQR} using disturbance response. In this section, we consider the following finite-truncated disturbance response controller optimization problem of \eqref{eq:LQR}
\begin{align}
    \textstyle \min_{\LK}\quad  & \textstyle C\left({\textstyle \LK:=\left[{\LK_1}; \cdots; {\LK_H}\right]}\right)\notag\\
    &\textstyle \hspace{-40pt}= \lim_{T\to\infty} \frac{1}{T}\mathbb{E}\sum_{t=0}^{T-1}x_t^\top Q x_t + u_t^\top Ru_t + 2u_t^\top Sx_t\label{eq:LQR-disturbance-feedback}\\
    \textstyle s.t.~~ x_{t+1} & \textstyle = Ax_t + Bu_t + w_t, ~~w_t
    \sim\mathcal{N}(0,I)\notag\\
    \textstyle u_t = &\textstyle  L_1^{(H)} w_{t-1} + \cdots+L_H^{(H)}w_{t-H}~~(w_s \!=\! 0 \textup{ for } s \!<\!0).\notag
\end{align}
This problem is a special case of Youla parameterization \cite{youla1976modern} for the open-loop stable LQR case. The reason that we consider problem \eqref{eq:LQR-disturbance-feedback} is twofold: (i) There is a simple relationship of the solution of \eqref{eq:LQR-disturbance-feedback} with the optimal control gain $K$ (formally stated in Section \ref{sec:relationship-K-L}); (ii) Compared with solving the Ricatti equation for $K$, \eqref{eq:LQR-disturbance-feedback} is a quadratic unconstrained optimization problem w.r.t. $L_1^{(H)},\dots,L_H^{(H)}$, so it can be expected that the problem obtains an explicit solution by solving a system of linear equations, whose SED structure is easier to analyze.
\subsection{Relationship with the optimal control gain $K$}\label{sec:relationship-K-L}
\begin{lemma}[\cite{supp2}]\label{lemma:relationship-K-L}
Let $K$ be the optimal control gain of the LQR problem \eqref{eq:LQR}, then under Assumption \ref{assump:A-exponential-stable}, $\LK$ solved from \eqref{eq:L-tilde-eq} satisfies
\begin{equation}
    \|K \!+\! L_1^{(H)}\| \le \frac{2\tau^3(\|B\|^2\|K\|\|Q\|+\|B\|\|K\|\|S\|)}{\lmin(R\!-\!SQ^{-1}S^\top)(1-e^{-2\rho})^{5/2}}e^{-H\rho}.
\end{equation}
\end{lemma}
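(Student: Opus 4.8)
The plan is to connect the two optimization problems—the state-feedback LQR \eqref{eq:LQR} and the finite-horizon disturbance-response problem \eqref{eq:LQR-disturbance-feedback}—through their respective closed-loop dynamics. First I would write out explicitly the disturbance-to-state and disturbance-to-control maps induced by the optimal state feedback $K$: under $u_t=-Kx_t$, one has $x_t=\sum_{s\ge 1}(A-BK)^{s-1}w_{t-s}$ and hence $u_t=-K\sum_{s\ge 1}(A-BK)^{s-1}w_{t-s}$, so the "ground truth" optimal disturbance-response gains are $\bar L_s=-K(A-BK)^{s-1}$; in particular $\bar L_1=-K$. The claim is essentially that the truncated optimizer $L_1^{(H)}$ agrees with $-K$ up to an error controlled by the tail $\sum_{s>H}$ of this series, which decays like $\tau e^{-H\rho}$ thanks to \eqref{eq:open-closed-loop-stable}.

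The key steps in order: (1) Express $C(L^{(H)})$ as a quadratic form in $(L_1^{(H)},\dots,L_H^{(H)})$ using the fact that $w_t$ are i.i.d.\ standard Gaussian, so cross-covariances at distinct lags vanish and the cost becomes a sum over lags of $\tr$ of products of the $L_s^{(H)}$ with the closed-form cost-weighting built from $A,B,Q,R,S$; the stationarity of the problem and the open-loop stability (Assumption~\ref{assump:A-exponential-stable}) guarantee the limit exists. (2) Identify the unique minimizer via the stationarity (normal) equations of this unconstrained quadratic—this is the system of linear equations alluded to in the text, which I would assume is \eqref{eq:L-tilde-eq} referenced in the statement. (3) Compare this truncated minimizer against the infinite-horizon optimizer $\{\bar L_s\}$ by a perturbation argument: the truncated problem is the infinite problem restricted to the subspace $\{L_s=0,\,s>H\}$, and projecting the optimal $\{\bar L_s\}$ onto that subspace changes the objective by only an $O(e^{-2H\rho})$ amount (since the discarded tail has that size), so by strong convexity—whose modulus is lower-bounded using $\lmin(R-SQ^{-1}S^\top)$ via a Schur-complement argument and the controllability-type Assumption~\ref{assump:LQR-controllability}—the truncated minimizer is $O(e^{-H\rho})$ close to $\{\bar L_s\}_{s\le H}$ in norm, and in particular $\|L_1^{(H)}-\bar L_1\|=\|L_1^{(H)}+K\|$ is $O(e^{-H\rho})$. (4) Finally, track all constants carefully: the $\tau^3$ arises because the perturbation bound needs one factor of $\tau$ from bounding the tail $\|(A-BK)^H\|$, and two more from bounding the condition number of the quadratic form (each Gram-type block carries a $\tau^2/(1-e^{-2\rho})$ from summing $\|(A-BK)^s\|^2$); the $(1-e^{-2\rho})^{5/2}$ and the $\|B\|,\|K\|,\|Q\|,\|S\|$ dependence come out of the same bookkeeping, with the $\|B\|^2\|K\|\|Q\|+\|B\|\|K\|\|S\|$ structure reflecting the two terms $B^\top P B$-type and $S$-type contributions to the gradient mismatch at the truncation point.

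The main obstacle I expect is step (3)—making the strong-convexity / perturbation argument quantitatively tight with exactly the stated constants. The subtlety is that one is not simply truncating a series but re-solving a constrained quadratic, so one must argue that the optimal truncated gains $L_1^{(H)},\dots,L_H^{(H)}$ do not "compensate" for the missing tail in a way that pushes $L_1^{(H)}$ far from $-K$; this requires a lower bound on the Hessian of the cost in the relevant directions (not just positive definiteness), and getting the $\lmin(R-SQ^{-1}S^\top)$ factor rather than something weaker needs the Schur-complement identity $R+B^\top PB \succeq R - SQ^{-1}S^\top$-type reasoning together with $P\succeq Q\succ 0$. A secondary nuisance is that the cost $C(\cdot)$ as written includes the cross term $2u_t^\top S x_t$, so the quadratic form in the $L_s$ is not block-diagonal in the naive basis and one has to be careful that the cross-lag structure still collapses correctly; I would handle this by the standard change of variables $u_t \leftarrow u_t - R^{-1}S x_t$ (or equivalently absorbing $S$ into a modified $A$ and $Q$), reducing to the $S=0$ case and then translating the bound back.
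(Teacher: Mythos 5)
Your high-level skeleton is the right one (and, since the paper itself imports this lemma from \cite{supp2} rather than proving it, I can only compare against the argument its constants encode): under the optimal feedback, $x_t=\sum_{s\ge1}(A-BK)^{s-1}w_{t-s}$ gives the exact disturbance-response representation $\bar L_s=-K(A-BK)^{s-1}$ with $\bar L_1=-K$, and the task is to show the minimizer of the $H$-truncated quadratic stays $O(e^{-H\rho})$-close to the truncation of $\bar L$, using the curvature bound $\lmin(\MK)\ge\lmin(R-SQ^{-1}S^\top)$ (Lemma~\ref{lemma:bound-wM}). The gap is in the mechanism you place at the center of step (3). The objective-gap-plus-strong-convexity argument gives $\|\LK-\mathrm{trunc}(\bar L)\|\le\sqrt{2\Delta/\lmin}$, where $\Delta=C(\mathrm{trunc}(\bar L))-C(\bar L)$ is a Hessian quadratic form in the discarded tail; the tail-diagonal Hessian blocks are $B^\top GB+R$, so the resulting constant unavoidably carries $\sqrt{\|R\|}$ and square roots of the other curvature quantities, and only $\lmin(R-SQ^{-1}S^\top)^{-1/2}$. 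The stated bound has no $\|R\|$ dependence at all and has $\lmin(R-SQ^{-1}S^\top)^{-1}$ to the first power, so your route produces a structurally different bound that neither matches nor implies the claimed inequality (e.g.\ when $\|R\|$ is large).

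What does produce exactly these constants is the first-order argument you only gesture at in step (4): the infinite-horizon optimality conditions read $\sum_{m\ge1}M_{km}\bar L_m+J_k=0$ for every $k$, so $\MK\,\mathrm{trunc}(\bar L)+\HK$ equals minus the residual whose $k$-th block is $\sum_{m>H}M_{km}\bar L_m$; subtracting $\MK\LK+\HK=0$ and inverting gives $\|L_1^{(H)}+K\|\le\lmin(R-SQ^{-1}S^\top)^{-1}\cdot\|\text{residual}\|$, and bounding $\|M_{km}\|\lesssim \tau^2\|B\|(\|B\|\|Q\|+\|S\|)e^{-\rho(m-k-1)}/(1-e^{-2\rho})$ against $\|\bar L_m\|\le\tau\|K\|e^{-\rho(m-1)}$, summing over $m>H$ and stacking over $k\le H$, yields precisely the $\tau^3$, the numerator $\|B\|^2\|K\|\|Q\|+\|B\|\|K\|\|S\|$, the factor $e^{-H\rho}$, and the $(1-e^{-2\rho})^{5/2}$. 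Two smaller points: you should justify that $\bar L$ is optimal (hence stationary) for the infinite-horizon disturbance-response problem, which follows here because it attains the LQR optimum within that class under Assumption~\ref{assump:A-exponential-stable}; and your fallback of eliminating $S$ via $u\leftarrow u-R^{-1}Sx$ replaces $A$ by $A-BR^{-1}S$, which need not be $(\tau,e^{-\rho})$-stable, and swaps in the other Schur complement $Q-S^\top R^{-1}S$, so it cannot be used to translate the bound back with the stated constants --- the cross term should be kept and handled directly inside $M_{km}$ and $J_k$, as the paper does.
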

Lemma \ref{lemma:relationship-K-L} suggests that, rather surprisingly, $-\LK_1$ is a good approximation of $K$, and the approximation error decays exponentially with the truncation horizon $H$. As a consequence, in order to understand the SED structure for $K$, we could instead study the SED structure for $\LK_1$, which is essentially easier.

\subsection{SED structure of $\LK$}
We first state the main theorem of this section:
\begin{theorem}[SED structure of $\LK$]\label{theorem:spatially-decay-L}
Under Assumption \ref{assump:exponential-decay} and \ref{assump:A-exponential-stable}, the optimal $\LK$ that solves \eqref{eq:LQR-disturbance-feedback} satisfies that $\LK_k$ is $(c_L^{(H)}, \gamma_L^{(H)})$-SED for any $1\le k\le K$, where 
\begin{align*}
    &c_L^{(H)} \lesssim O\bigg(\textup{poly}\Big(N, \tau,\frac{1}{1-e^{-2\rho}}, \frac{1}{\lmin(R\!-\!SQ^{-1}S^\top)},\\
    &\qquad \qquad\qquad\qquad \|B\|,\|Q\|,\|S\|, b, q, s\Big)\bigg), \\
    &\gamma_L^{(H)} \gtrsim \gammasys \rho{\bigg\slash}O\bigg(\textup{poly}\Big(\tau,\frac{1}{1-e^{-2\rho}},\frac{1}{\lmin(R\!-\!SQ^{-1}S^\top)},\\
    &\qquad \qquad \lmax(R),\|B\|,\|Q\|,\|S\|,\ln\left\{N,H,a,b,q,r,s\right\}\Big)\bigg)\!.
\end{align*}
\end{theorem}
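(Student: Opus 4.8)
The plan is to convert the unconstrained quadratic program \eqref{eq:LQR-disturbance-feedback} into an explicit linear system for the stacked variable $\wL:=[L_1^{(H)};\cdots;L_H^{(H)}]$ and then transfer the spatial decay of the data matrices to the solution. First I would write the closed-loop state and input as linear functions of the past disturbances: letting $\Phi_x^{(j)},\Phi_u^{(j)}$ denote the coefficients of $w_{t-j}$ in $x_t,u_t$, one has $\Phi_u^{(j)}=L_j^{(H)}$ for $1\le j\le H$ and $\Phi_x^{(j)}=A^{j-1}+\sum_{k=1}^{\min(H,j-1)}A^{j-1-k}B\,L_k^{(H)}$. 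Since $w_t\sim\mathcal N(0,I)$ i.i.d. and Assumption~\ref{assump:A-exponential-stable} makes the closed loop stable and the relevant series convergent, the averaged cost becomes a strongly convex quadratic $C(\wL)=\tr(\wL^\top\mathbf M\wL)+2\tr(\mathbf N^\top\wL)+\textup{const}$ whose unique minimizer solves $\mathbf M\wL=-\mathbf N$, i.e. $\wL=-\mathbf M^{-1}\mathbf N$. Here $\mathbf M\succ0$ and $\mathbf N$ are given explicitly in terms of $A,B,Q,R,S$ together with Lyapunov-type sums such as $\sum_{\ell\ge0}(A^\ell)^\top QA^\ell$; crucially, $\mathbf M$ is quasi-SED across the agent index (rate $\sim\gammasys$, via Assumption~\ref{assump:exponential-decay}) and also decays exponentially in the time-index difference $|k-k'|$ with rate $\sim\rho$ (from $\|A^\ell\|\le\tau e^{-\rho\ell}$), while $\mathbf N$ carries the $A^{j-1}$ ``offset'' terms and enjoys the same two-fold decay.

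Next I would collect the structural lemmas that make the transfer work. (i) A calculus of SED matrices: a product of a $(c_1,\gamma_1)$-SED and a $(c_2,\gamma_2)$-SED matrix is SED with rate $\min(\gamma_1,\gamma_2)$ up to a multiplicative factor controlled by the graph's growth and by $N$; a stable geometric series $\sum_{\ell\ge0}X^\ell$ of an SED $X$ with $\|X\|<1$ is quasi-SED. Applying these to the closed forms of $\mathbf M,\mathbf N$ shows both are quasi-SED with rate $\sim\gammasys$, the convergence of the Lyapunov sums (governed by $\|A^\ell\|\le\tau e^{-\rho\ell}$) being exactly what brings in the $\tau$ and $\tfrac1{1-e^{-2\rho}}$ dependencies. (ii) Spectral bounds: $\|\mathbf M\|$ is bounded, independently of $N$ and $H$, by $\lmax(R)+\tfrac{\tau^2\|B\|^2\|Q\|}{(1-e^{-\rho})^2}+\tfrac{2\tau\|B\|\|S\|}{1-e^{-\rho}}$ (a block-Toeplitz operator-norm estimate), while $\lmin(\mathbf M)\ge\lmin(R-SQ^{-1}S^\top)>0$ follows from Assumption~\ref{assump:LQR-controllability} by a Schur-complement / completion-of-the-square argument on the stage cost; this is what places $\tfrac1{\lmin(R-SQ^{-1}S^\top)}$ into both $c_L^{(H)}$ and $\gamma_L^{(H)}$.

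The crux is showing that $\mathbf M^{-1}$ is quasi-SED with the advertised rate. I would follow the classical route for inverses of spatially decaying matrices: approximate $\mathbf M^{-1}$ by a degree-$d$ polynomial $p_d(\mathbf M)$ (scaled Chebyshev on $[\lmin(\mathbf M),\lmax(\mathbf M)]$), which gives $\|\mathbf M^{-1}-p_d(\mathbf M)\|\lesssim\tfrac{\kappa}{\lmin(\mathbf M)}\rho_0^{\,d}$ with $\kappa=\lmax(\mathbf M)/\lmin(\mathbf M)$ and $\rho_0=\tfrac{\sqrt\kappa-1}{\sqrt\kappa+1}<1$; combined with the fact that a degree-$d$ polynomial in a band-truncation of $\mathbf M$ has its $(i,j)$ block vanish once $\dist(i,j)$ exceeds a multiple of the band-width times $d$, one balances the band-truncation error, the polynomial-approximation error, and the choice $d\asymp\dist(i,j)$ to obtain $\|[\mathbf M^{-1}]_{ij}\|\le c'e^{-\gamma'\dist(i,j)}$ with $\gamma'\gtrsim\gammasys/\textup{poly}\big(\tau,\tfrac1{1-e^{-2\rho}},\tfrac1{\lmin(R-SQ^{-1}S^\top)},\lmax(R),\|B\|,\|Q\|,\|S\|,\ln\{N,H,a,b,q,r,s\}\big)$. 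Finally, $\wL=-\mathbf M^{-1}\mathbf N$ is a product of two quasi-SED matrices, so each $L_k^{(H)}$ is $(c_L^{(H)},\gamma_L^{(H)})$-SED; assembling the constants from steps (i)--(ii) and the polynomial argument yields the stated $\textup{poly}(\cdots)$ expressions, uniformly over $k$.

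The main obstacle is this inversion step, and specifically controlling the multiplicative blow-up incurred when composing and summing SED matrices: each such operation costs a factor that depends on $N$ (through the graph's growth and the extent of the composite index set in the time direction), and to keep the final bound finite a logarithmic number of these factors must be absorbed into the decay rate rather than the leading constant — this is the origin of the $\textup{poly}\ln(N)$ loss flagged in Remark~\ref{rmk:SED-rate-validness} and of the $\ln\{N,H,a,b,q,r,s\}$ appearing inside $\gamma_L^{(H)}$, and of the fact that $H$ does not enter $c_L^{(H)}$ at all. A secondary, purely bookkeeping, difficulty is propagating every $N$-, $\tau$-, and $\rho$-dependence through the Lyapunov sums, the spectral bounds, and the approximation error so that they assemble into exactly the claimed $c_L^{(H)}$ and $\gamma_L^{(H)}$, and checking that the time-index ($k$) direction degrades the estimate by at most a logarithmic factor in $H$.
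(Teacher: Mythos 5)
Your proposal is correct and follows essentially the same route as the paper: reduce \eqref{eq:LQR-disturbance-feedback} to the linear system $\MK\LK+\HK=0$ with blocks built from $A,B,Q,R,S$ and the Lyapunov sum $G=\sum_{t\ge0}(A^t)^\top QA^t$ (Lemma \ref{lemma:optimal-L-K}), establish SED of those blocks with rate $\sim\gammasys\rho/(\rho+\ln(aN))$ (Lemma \ref{lemma:spatially-decay-M-H}), bound $\lmin(\MK)\ge\lmin(R-SQ^{-1}S^\top)$ and $\lmax(\MK)$ exactly as you describe (Lemma \ref{lemma:bound-wM}), and then approximate $\left(\MK\right)^{-1}$ by a polynomial of degree $\asymp\dist(i,j)$, trading the exponential blow-up of the SED constant against the exponential approximation error (Lemma \ref{lemma:main-lemma-spatially-decay-L}). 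The only deviation is your use of Chebyshev approximation plus band truncation where the paper uses a plain Neumann-series truncation of $\bigl(I-\MK/\lmax\bigr)^{-1}$; this is a quantitative variant (it would improve the condition-number dependence in $\gamma_L^{(H)}$ from $\kappa$ to $\sqrt{\kappa}$) but changes nothing structural.
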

The full proof as well as formal statement of Theorem \ref{theorem:spatially-decay-L} is in Appendix \ref{apdx:spatially-decay-L}. Along with Lemma \ref{lemma:relationship-K-L}, the SED structure of $\LK$ in Theorem \ref{theorem:spatially-decay-L} can be easily translated to the optimal control gain $K$, and thus obtain Theorem \ref{theorem:spatial-decay-F} (detailed derivation see Appendix \ref{apdx:proof-SED-F}).

We also view Theorem \ref{theorem:spatially-decay-L} itself as an interesting finding. There are many recent works that use disturbance response controllers for online adaptive control \cite{simchowitz2020improper,li2021safe,agarwal2019online,agarwal2019logarithmic,goulart2006optimization}. It would be an exciting direction to see how the SED structure of the optimal disturbance response controller could lead to more efficient learning/control algorithms in these problems.

\section{Proof Sketches}\label{sec:MK-HK-LK-structure}
The previous section has introduced the relationship of disturbance response controller and the state feedback controller (Lemma \ref{lemma:relationship-K-L}) and shown the SED structure of the disturbance response controller (Theorem \ref{theorem:spatially-decay-L}). Combining these two insights finishes the proof our our main result Theorem \ref{theorem:spatial-decay-F}. Thus this section focuses on a detailed proof sketch of Theorem \ref{theorem:spatially-decay-L}, which is the key step in proving the main results. We start with Section \ref{sec:explicit-solu-disturbance} which shows that the explicit solution for the optimization problem \eqref{eq:LQR-disturbance-feedback} is a solution to a system of linear equations (Lemma \ref{lemma:optimal-L-K}, Equation \eqref{eq:L-tilde-eq}). Then Section \ref{sec:M-J-structure} studies the SED structure for matrices coefficients of the system of linear equations. Building on these results, Section \ref{sec:structure-L} gives a detailed proof sketch of Theorem \ref{theorem:spatially-decay-L}.

\subsection{Solution of problem \eqref{eq:LQR-disturbance-feedback}}\label{sec:explicit-solu-disturbance} 
Note that \eqref{eq:LQR-disturbance-feedback} is a quadratic unconstrained optimization problem w.r.t. $\LK$, so it can be expected that the problem obtains an explicit solution. Since our objective is to show that the solution $\LK_1,\dots,\LK_H$ to \eqref{eq:LQR-disturbance-feedback} are SED. As an initial step, it is necessary to identify the explicit solution, which is the main focus of this section. We first state the main lemma:
\begin{lemma}[Appendix \ref{apdx:optimal-LK}]\label{lemma:optimal-L-K}
The optimal $\LK$ of \eqref{eq:LQR-disturbance-feedback} solves
\begin{equation}\label{eq:L-tilde-eq} 
    \MK\LK + \HK=0,
\end{equation}
where $\MK\in\bR^{Kn_u\times Kn_u}$ and $\HK\in\bR^{Kn_u\times n_x}$ are defined as
\begin{align}
    \hspace{-10pt}\MK\! \!:=\! \begin{bmatrix}
         M_{11}& M_{12}&\cdots &M_{1H} \\
         M_{21}& M_{22}&\cdots &M_{2H}\\
         \vdots&\vdots & &\\
         M_{H1} & M_{H2}&\cdots &M_{HH}
    \end{bmatrix}
    \HK := \begin{bmatrix}
    J_1\\
    \vdots\\
    J_H
    \end{bmatrix}, \label{eq:def-MK-HK}
\end{align}
with submatrices $ M_{km}\in \bR^{n_u\times n_u}, J_k \in \bR^{n_u\times n_x}$ defined as:
\begin{align}
    M_{km}&:=\begin{cases}
        B^\top GB + R, &  k=m\\
        B^\top GA^{k-m}B + SA^{k-m-1}B, &  k> m\\
        B^\top(A^{m-k})^\top GB + B^\top (A^{m-k-1})^\top S^\top, & k<m 
    \end{cases},\\
  J_k&:= B^\top G A^k + SA^{k-1}. \label{eq:def-M-H}
\end{align}
Here $G\in R^{n_x\times n_x}$ is defined as:
\begin{align}\label{eq:def-G}
\textstyle
    G := \sum_{t=0}^\infty (A^t)^\top Q A^t.
\end{align}
\end{lemma}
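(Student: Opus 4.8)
The plan is to turn the average-cost objective of \eqref{eq:LQR-disturbance-feedback} into an explicit convex quadratic in $L_1^{(H)},\dots,L_H^{(H)}$ and then read off its first-order optimality conditions. First I would unroll the dynamics in steady state: with the convention $L_m^{(H)}=0$ for $m\notin\{1,\dots,H\}$, the input is $u_t=\sum_{k\ge1}L_k^{(H)}w_{t-k}$ and the state is $x_t=\sum_{i\ge1}\Phi_i\,w_{t-i}$, where $\Phi_i:=A^{i-1}+\sum_{m=1}^{i-1}A^{i-1-m}BL_m^{(H)}$ (equivalently $\Phi_1=I$ and $\Phi_{i+1}=A\Phi_i+BL_i^{(H)}$). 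Open-loop exponential stability (Assumption~\ref{assump:A-exponential-stable}) makes $G$ in \eqref{eq:def-G} and all of these series absolutely convergent and makes the time-averaged cost equal to the steady-state cost. Since the $w_t$ are i.i.d.\ with covariance $I$, all cross-covariances collapse ($\mathbb{E}[x_tx_t^\top]=\sum_{i\ge1}\Phi_i\Phi_i^\top$, $\mathbb{E}[u_tu_t^\top]=\sum_{k=1}^H L_k^{(H)}(L_k^{(H)})^\top$, $\mathbb{E}[x_tu_t^\top]=\sum_{k=1}^H \Phi_k(L_k^{(H)})^\top$), and the objective becomes
\[
C(\LK)=\tr\!\Big(Q\sum_{i\ge1}\Phi_i\Phi_i^\top\Big)+\sum_{k=1}^{H}\tr\!\big(R\,L_k^{(H)}(L_k^{(H)})^\top\big)+2\sum_{k=1}^{H}\tr\!\big(S\,\Phi_k (L_k^{(H)})^\top\big).
\]

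Next I would observe that $C$ is a convex quadratic in $(L_1^{(H)},\dots,L_H^{(H)})$: by Assumption~\ref{assump:LQR-controllability} and a Schur-complement argument the stage-cost matrix $\left[\begin{smallmatrix}Q&S^\top\\ S&R\end{smallmatrix}\right]$ is positive semidefinite, so each stage cost is convex in $(x_t,u_t)$ and hence $C$ is convex in $\LK$ (being an expectation of such costs composed with linear maps). Consequently the optimal $\LK$ is exactly characterized by $\nabla_{L_k^{(H)}}C=0$ for all $k$, so it remains to compute these gradients and identify them with the blocks of \eqref{eq:L-tilde-eq}.

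The core computation is this differentiation. Using $\partial\Phi_i/\partial L_k^{(H)}=A^{i-1-k}B$ for $i>k$ and $0$ otherwise (so that, in particular, $\Phi_k$ itself does not depend on $L_k^{(H)}$), a short matrix-calculus computation gives
\[
\tfrac12\nabla_{L_k^{(H)}}C \;=\; B^\top\!\!\sum_{j\ge0}(A^j)^\top Q\,\Phi_{j+k+1}\;+\;RL_k^{(H)}\;+\;S\Phi_k\;+\;\sum_{m>k}B^\top(A^{m-1-k})^\top S^\top L_m^{(H)}.
\]
Then I substitute $\Phi_{j+k+1}=A^{j+k}+\sum_{m\ge1}A^{j+k-m}BL_m^{(H)}$, split the inner sum according to the sign of $m-k$, re-index, and use the identities $\sum_{j\ge0}(A^j)^\top QA^{j+\ell}=GA^\ell$ and $\sum_{j\ge0}(A^{\ell})^\top(A^j)^\top QA^{j}=(A^\ell)^\top G$, valid for $\ell\ge0$. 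The disturbance-free part produces $B^\top GA^k+SA^{k-1}=J_k$; the $m=k$ coefficient is $B^\top GB+R=M_{kk}$; the coefficients for $m<k$ are $B^\top GA^{k-m}B+SA^{k-m-1}B=M_{km}$; and those for $m>k$ are $B^\top(A^{m-k})^\top GB+B^\top(A^{m-k-1})^\top S^\top=M_{km}$, matching \eqref{eq:def-M-H}. Hence $\nabla_{L_k^{(H)}}C=0$ reads $\sum_{m=1}^{H}M_{km}L_m^{(H)}+J_k=0$, and stacking over $k=1,\dots,H$ yields $\MK\LK+\HK=0$.

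I expect the only delicate step to be the last one: correctly truncating and re-indexing the two geometric-type operator series over $j$ according to whether $m\le k$ or $m>k$, so that they telescope into $G$ and the appropriate power of $A$; one must also keep the roles of $S$ and $S^\top$ straight. Everything else --- writing $x_t$ as a disturbance convolution, collapsing the covariances, convexity, and the gradient formula --- is routine, though it is worth recording explicitly that convergence of $G$ and of the impulse-response series $\sum_i\Phi_i\Phi_i^\top$ is exactly what Assumption~\ref{assump:A-exponential-stable} provides.
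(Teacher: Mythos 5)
Your proposal is correct, and at its core it is the same argument as the paper's: represent the closed-loop state as a convolution of the disturbances with impulse-response coefficients (your $\Phi_i$ are exactly the paper's $T_k$, satisfying $\Phi_1=I$, $\Phi_{i+1}=A\Phi_i+BL_i^{(H)}$), collapse the i.i.d.\ noise covariances to write $C(\LK)$ as an explicit quadratic in $\LK$, and read off the unconstrained first-order condition $\MK\LK+\HK=0$. The difference is in execution. The paper works through $z$-transforms and assembles the cost as a single block quadratic form $\tr\bigl([I,\LK{}^\top]\bigl[\begin{smallmatrix}\Lambda_{11}&\HK{}^\top\\ \HK&\MK\end{smallmatrix}\bigr][I;\LK]\bigr)$ using the stacked matrices $\LambdaK,\XiK,\BK,\RK,\SK$, identifying the blocks with $\MK,\HK$ ``by definition''; you instead stay in the time domain, differentiate blockwise in $L_k^{(H)}$, and obtain $M_{km}$ and $J_k$ explicitly by re-indexing the geometric operator series into $G A^{\ell}$ and $(A^{\ell})^\top G$ — I checked the three cases $m=k$, $m<k$, $m>k$ and the disturbance-free term, and they match \eqref{eq:def-M-H} exactly, including the placement of $S$ versus $S^\top$. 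Your route is more self-contained (it makes the block identification explicit rather than leaving it as a verification) at the cost of a slightly heavier matrix-calculus computation; the paper's route buys reusable block notation that it needs anyway for the later eigenvalue bounds on $\MK$. One small remark: the convexity/Schur-complement step is not needed for the stated direction of the lemma — any unconstrained minimizer of the smooth quadratic must be stationary — though it does give the converse (sufficiency) for free; the paper obtains the analogous strict positivity of $\MK$ only later, in Lemma \ref{lemma:bound-wM}. Also make sure the interchange of differentiation with the infinite sums defining $G$ and the $\Phi$-series is flagged as justified by Assumption \ref{assump:A-exponential-stable}, as you note.
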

\vspace{10pt}
Although the definition of variables may seem heavy at first glance, the key takeaways of Lemma \ref{lemma:optimal-L-K} are actually simple and straightforward: (i) The optimal $\LK$ solves a system of linear equations (Eq \eqref{eq:L-tilde-eq}); (ii) The submatrices $M_{km}, J_k$ of $\MK,\HK$ are matrix polynomials with respect to the LQR parameters $A,B,Q,R,S$ as well as $G$ (which is the solution of the discrete time Lyapunov equation \eqref{eq:def-G}). These two observations are the key enablers for analyzing the SED structures of $\LK$, which is explained in more detail in the following subsections.

\subsection{SED structure of $M_{km}, J_k$'s}\label{sec:M-J-structure}
\begin{lemma}[SED structure of $M_{km}, J_k$'s]\label{lemma:spatially-decay-M-H}
Under Assumption \ref{assump:exponential-decay} and \ref{assump:A-exponential-stable}, for any $k,m\ge 1$, $M_{km}$ is $(c_M, \gamma_M)$-SED and $J_k$ is $(c_J,\gamma_M)$-SED ($M_{km}, J_k$ defined in \eqref{eq:def-M-H}), where the absolute constants are defined as
\begin{align*}
\textstyle c_M &= b^2N^2\left(\frac{\tau^2\|Q\|}{1-e^{-2\rho}}+2q\right) + bN(s+\tau\|S\|)+ r,\\
\textstyle c_J &= bN\left(\frac{\tau^2\|Q\|}{1-e^{-2\rho}} + 2q\right) + s + \tau\|S\|,\\
\textstyle \gamma_M &= \frac{\gammasys \rho}{(\rho+\ln(aN))}.
\end{align*}
\end{lemma}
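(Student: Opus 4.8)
\textbf{Proof plan for Lemma \ref{lemma:spatially-decay-M-H}.}

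The strategy is to exploit the two structural facts highlighted right after Lemma \ref{lemma:optimal-L-K}: each $M_{km}$ and $J_k$ is a matrix \emph{polynomial} in $A,B,Q,R,S$ and $G$, and products/sums of SED matrices remain SED with controllable parameter degradation. So the first ingredient I would establish (or invoke, if it appears elsewhere in the appendix) is a small ``SED calculus'': (i) if $X$ is $(c_1,\gamma)$-SED and $Y$ is $(c_2,\gamma)$-SED, then $XY$ is $(c_1 c_2 N, \gamma)$-SED, since $\|[XY]_{ij}\| \le \sum_{\ell} \|[X]_{i\ell}\|\|[Y]_{\ell j}\| \le c_1 c_2 \sum_\ell e^{-\gamma\dist(i,\ell)}e^{-\gamma\dist(\ell,j)} \le c_1 c_2 N e^{-\gamma\dist(i,j)}$ by the triangle inequality; (ii) sums add the constants $c$ at the worse of the two rates; (iii) a scalar multiple scales $c$. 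Iterating (i), a product of $p$ matrices each $(c,\gamma)$-SED is $(c^p N^{p-1},\gamma)$-SED.

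The crucial non-elementary step is controlling $G = \sum_{t=0}^\infty (A^t)^\top Q A^t$ and, more generally, the powers $A^{k}$. A fixed power $A^k$ is $(a^k N^{k-1}, \gammasys)$-SED by the product rule, but the constant $a^k N^{k-1}$ blows up with $k$, which is useless once we sum over $t$ in $G$. The fix is the standard trick of trading decay rate for a better constant: split $\dist(i,j) = \dist(i,j)$ and note that for the product bound on $A^t$ we may instead argue that $\|[A^t]_{ij}\| \le (aN)^{t} e^{-\gammasys \dist(i,j)}$ while simultaneously $\|[A^t]_{ij}\| \le \|A^t\| \le \tau e^{-\rho t}$; then for any $\theta\in(0,1)$, $\|[A^t]_{ij}\| \le \big((aN)^t e^{-\gammasys\dist(i,j)}\big)^{\theta}\big(\tau e^{-\rho t}\big)^{1-\theta}$. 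Choosing $\theta$ so that the growing factor $(aN)^{\theta t}$ is beaten by the decaying factor $e^{-(1-\theta)\rho t}$ — concretely $\theta = \rho/(\rho + \ln(aN))$ — makes the $t$-dependence summable and produces exactly the rate $\gamma_M = \gammasys\rho/(\rho+\ln(aN))$ claimed in the lemma, while the geometric sum over $t$ contributes the $\frac{1}{1-e^{-2\rho}}$ (or $\frac{1}{1-e^{-\rho}}$) factor and the $\|Q\|$ factor sitting in $c_M, c_J$. This interpolation-between-two-norm-bounds step is, I expect, the main obstacle: getting the bookkeeping right so that the \emph{same} rate $\gamma_M$ works uniformly for $G$, for each $A^{k-m}B$ block, and hence for every $M_{km}$ and $J_k$ regardless of $k,m$.

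With $G$ shown to be $(c_G,\gamma_M)$-SED for $c_G \lesssim \tfrac{\tau^2\|Q\|}{1-e^{-2\rho}}\cdot(\text{poly}(N))$ — and here one has to be slightly careful: $G$ is a sum over all $t$ of products, so one uses the interpolated per-entry bound on $A^t$ and sums, rather than the naive product rule — I would then assemble each submatrix term by term. For $k>m$, $M_{km} = B^\top G A^{k-m}B + S A^{k-m-1}B$: the first term is a product of the SED matrices $B^\top$ ($(b,\gammasys)$-SED, hence $(b,\gamma_M)$-SED since $\gamma_M\le\gammasys$), $G$, $A^{k-m}$, $B$; the second is analogous with $S$ in place of $B^\top G$. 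Applying the product and sum rules and the crude bounds $q,r,s\ge 1$, $N\ge 1$ collapses the resulting expression into the stated $c_M$; the diagonal case $k=m$ adds the $R$ term contributing the ``$+r$''; the off-diagonal $k<m$ case is the transpose-type mirror and gives the same bound. The $J_k = B^\top G A^k + S A^{k-1}$ computation is identical but with one fewer trailing $B$, which is why $c_J$ has $bN(\cdots)$ where $c_M$ has $b^2 N^2(\cdots)$. Throughout, the only place the rate degrades is in passing from $\gammasys$ to $\gamma_M$ via the interpolation for $A^t$; all the matrix-product multiplications preserve $\gamma_M$ at the cost of extra powers of $N$ in the constant, which is exactly the form of the claimed $c_M, c_J$.
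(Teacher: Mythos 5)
Your overall architecture --- an SED calculus for sums and products (the paper's Lemma \ref{lemma:bound-AB}), reduction of everything to the SED structure of $GA^m$ and $SA^m$, then case-by-case assembly of $M_{km}$ and $J_k$ --- is exactly the paper's route, and your treatment of $SA^m$ (playing the entrywise bound at rate $\gammasys$ against the operator bound $\tau\|S\|e^{-\rho m}$) is equivalent to the paper's min-of-two-bounds argument, since there is no sum over $m$ there. The genuine gap is in the step you yourself flag as crucial: making the sum over $t$ inside $G$ converge. With your choice $\theta=\rho/(\rho+\ln(aN))$ the growth and decay factors cancel exactly: $(aN)^{\theta t}e^{-(1-\theta)\rho t}=\exp\bigl(t[\theta\ln(aN)-(1-\theta)\rho]\bigr)=1$ for every $t$, because $\theta\ln(aN)=(1-\theta)\rho=\frac{\rho\ln(aN)}{\rho+\ln(aN)}$. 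Hence the interpolated per-entry bound on $(A^t)^\top QA^{t+m}$ is constant in $t$, the resulting series bound for $G$ diverges, and no ``geometric sum contributing $\frac{1}{1-e^{-2\rho}}$'' materializes. Moreover this is not a tunable slip: summability of the term-by-term interpolated bounds forces $\theta<\rho/(\rho+\ln(aN))$, while the claimed rate $\gamma_M=\theta\gammasys$ forces $\theta\ge\rho/(\rho+\ln(aN))$, so no fixed interpolation parameter can simultaneously give the stated $\gamma_M$ and a finite constant --- you only obtain rates strictly below $\gamma_M$ with constants blowing up as you approach it.

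The fix is to apply the rate-versus-constant trade at the level of the whole series rather than term by term, which is what the paper does (Lemma \ref{lemma:spatially-decaying-G}): truncate, setting $X_t=\sum_{k=0}^{t-1}(A^k)^\top QA^{k+m}$; bound the tail in operator norm via $(\tau,e^{-\rho})$-stability as $\|GA^m-X_t\|\le\frac{\tau^2\|Q\|}{1-e^{-2\rho}}e^{-(2t+m)\rho}$; bound the head entrywise at the \emph{full} rate $\gammasys$ with constant $2q(aN)^{2(t-1)+m}$; and only then choose the truncation level $t\approx\frac{\gammasys\dist(i,j)}{2(\rho+\ln(aN))}-\frac m2$ as a function of $\dist(i,j)$ to balance the two terms. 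This distance-dependent truncation is what delivers exactly $\gamma_M$ together with the finite constant $\frac{\tau^2\|Q\|}{1-e^{-2\rho}}+2q$ (with no extra $\mathrm{poly}(N)$ on that term, in contrast to your proposed $c_G$); with that replacement, the rest of your assembly of $c_M$ and $c_J$, including downgrading $B$ and $S$ from rate $\gammasys$ to $\gamma_M$ before applying the product rule, goes through as you describe.
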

The proof of Lemma \ref{lemma:spatially-decay-M-H} is deferred to Appendix \ref{apdx:spatially-decay-M-H}. Here we provide a brief intuition of the lemma. The key fact is that $G = \sum_{t=0}^\infty (A^\top)^t Q A^t$, which shows up in every $M_{km}$ and $J_k$. Although as far as we know we are not aware whether exact same results exist in literature, the proof technique resembles previous works that study the setting where $A,Q$ are sparse or banded matrices \cite{haber2016sparse}. Then, using similar arguments, we can show that $M_{km}, J_k$'s are also SED.

\subsection{SED structure of $\LK$}\label{sec:structure-L}
 
Now we are ready to present a proof sketch of Theorem \ref{theorem:spatially-decay-L} which also provides important insights on how the SED rate $\gamma_L^{(H)}$ depends on other factors. The proof sketch can roughly be decomposed into the following four steps: 
\paragraph{Step 1: Taylor expansion of $\left(\MK\right)^{-1}$} It is not hard to show that $\MK$ is a positive definite matrix with bounded eigenvalues, namely, $0\prec \lmin I \preceq \MK\preceq \lmax I$ for some $\lmin,\lmax$. Normalize $\MK$ as ${\MK}' := I - \frac{\MK}{\lmax}$, then we have that $0\preceq {\MK}' \preceq (1-\frac{\lmin}{\lmax}) I$.
Applying Taylor expansion to Lemma \ref{lemma:optimal-L-K}, we can write $\LK$ can as:
\begin{align*}
    &\textstyle \LK \!=\! -\!\left(\MK\right)^{\!-1}\!\!\HK \!=\!-\frac{1}{\lmax}\left(I-{\MK}'\right)^{-1}\!\!\HK\\
    &\textstyle ~=\!-\frac{1}{\lmax}\sum_{s=0}^{+\infty}{{\MK}'}^s \!\HK.
\end{align*}
Define the truncated Taylor series as $$\textstyle L^{(H),t} := -\frac{1}{\lmax}\sum_{s=0}^{t-1}{{\MK}'}^s \HK.$$ Note that $L^{(H),t}$ is a $(t-1)$-th order polynomial w.r.t. $\MK$, and it converges to $\LK$ as $t\to +\infty$.
\paragraph{Step 2: $L^{(H),t}$'s (do not) preserve SED property.} Since $L^{(H),t}$ converges to $\LK$, the natural next-step is to check whether $L^{(H),t}$ has SED property. From its definition, $L^{(H),t}$ satisfies the following linear recursive relation:
\begin{align*}
   \textstyle L^{(H),t+1} &=  \textstyle \left(I-\frac{\MK}{\lmax}\right)L^{(H),t}- \frac{\HK}{\lmax}\\
    \textstyle \Longrightarrow~~L^{(H),t+1}_k &=\textstyle  -\sum_{m=1}^H\frac{M_{km}}{\lmax}L^{(H),t}_m + L^{(H),t}_k - \frac{\HK_k}{\lmax}.
\end{align*}
Additionally, we can prove the following lemma suggesting that SED property is preserved for matrix addition and multiplication:
\begin{lemma}[Appendix \ref{apdx:auxiliary}] \label{lemma:bound-AB-main-text}
Suppose $X, Y$ are $(x,\gamma)$ and $(y,\gamma)$-SED respectively, then $X+Y$ is $(x+y,\gamma)$-SED and $XY$ is $(Nxy, \gamma)$-SED.
\end{lemma}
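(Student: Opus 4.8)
The plan is to verify both assertions directly from Definition \ref{def:SED}; nothing beyond the triangle inequality --- applied once to the induced matrix norm and once to the distance function $\dist$ --- is required. Throughout, fix an arbitrary pair of agents $i,j\in[N]$, and recall that in the statement $X$ and $Y$ are implicitly assumed to carry compatible agent-partitions (the column partition of $X$ agrees with the row partition of $Y$), so that the block identities below make sense.

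\textbf{Additive part.} Since block extraction is linear, $[X+Y]_{ij}=[X]_{ij}+[Y]_{ij}$. The triangle inequality for $\|\cdot\|$ then gives
$\|[X+Y]_{ij}\|\le\|[X]_{ij}\|+\|[Y]_{ij}\|\le x\,e^{-\gamma\dist(i,j)}+y\,e^{-\gamma\dist(i,j)}=(x+y)\,e^{-\gamma\dist(i,j)}$,
which is exactly the claim that $X+Y$ is $(x+y,\gamma)$-SED.

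\textbf{Multiplicative part.} Expand the $(i,j)$ block of the product over the intermediate agent index, $[XY]_{ij}=\sum_{k\in[N]}[X]_{ik}[Y]_{kj}$. Using the triangle inequality for $\|\cdot\|$ followed by submultiplicativity of the induced $\ell_2$ norm, and then the SED bounds on $X$ and $Y$,
$\|[XY]_{ij}\|\le\sum_{k\in[N]}\|[X]_{ik}\|\,\|[Y]_{kj}\|\le xy\sum_{k\in[N]}e^{-\gamma(\dist(i,k)+\dist(k,j))}$.
By the triangle inequality $\dist(i,k)+\dist(k,j)\ge\dist(i,j)$ for every $k$, so each summand is at most $e^{-\gamma\dist(i,j)}$; since there are $N$ terms, $\|[XY]_{ij}\|\le Nxy\,e^{-\gamma\dist(i,j)}$, i.e. $XY$ is $(Nxy,\gamma)$-SED.

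There is essentially no obstacle here; the only thing deserving attention is bookkeeping --- keeping the agent-partitions compatible so that the block identities hold --- and noting that the factor $N$ (rather than an absolute constant) in the multiplicative bound is unavoidable with this argument, since we control each block solely through its norm and exploit no cancellation across the $N$ intermediate terms. It is precisely this $N$-fold degradation of the constant at every matrix product that, once many such products are chained (as in the Taylor-series expansion of Step 1), accumulates into the $\mathrm{poly}\ln(N)$ slack and hence the ``quasi''-SED --- rather than genuinely SED --- conclusion.
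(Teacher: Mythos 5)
Your proof is correct and follows essentially the same argument as the paper's Appendix proof of Lemma \ref{lemma:bound-AB}: expand the $(i,j)$ block of $XY$ over the intermediate agent index, bound each term via submultiplicativity and the SED hypotheses, and invoke the triangle inequality $\dist(i,k)+\dist(k,j)\ge\dist(i,j)$ to collect the $N$ terms into the factor $Nxy$. The additive part, which the paper states in the main text but does not spell out in the appendix, is handled by you exactly as one would expect (linearity of block extraction plus the norm triangle inequality), so there is nothing to correct.
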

Lemma \ref{lemma:spatially-decay-M-H} already shows that $M_{km}, J_k$'s are $(c_M,\gamma_M)$-SED and $(c_J,\gamma_M)$-SED, respectively. Thus, by applying Lemma \ref{lemma:bound-AB-main-text} to the above recursive relation on $L^{(H),t}$, we can conclude that $L^{(H),t}_k$~'s are $(2(c_MNH+1)^{t-1}c_J, \gamma_M)$-SED by induction. However, note that this conclusion cannot be applied directly to show that $\LK$ is also SED, because although $L^{(H),t}$ preserves the SED property, the coefficient $2(c_MNH+1)^{t-1}c_J$ grows \emph{exponentially} w.r.t $t$, making the bound invalid if we let $t\to+\infty$.
\paragraph{Step 3: $L^{(H),t}$'s converge to $\LK$ exponentially fast} In the previous step, we cannot directly make $t\to+\infty$ as the SED coefficient blows up exponentially. One approach to fix this issue is that instead of letting $t$ go to infinity, we will stop at an appropriate threshold $t_0$ and show that $L^{(H),t_0}$ is a good enough approximation of $\LK$ already. This requires showing that $L^{(H),t}$ converges to $\LK$ in a relatively fast rate, so that we can pick a smaller threshold $t_0$. And this is indeed the case since
\begin{align*}
    &\|\LK\!\!-\!L^{(H),t}\| \!=\! \left\|\sum_{s=t}^{+\infty}\!\frac{{{\MK}'}^s \!\!\HK}{\lmax}\!\right\|\!\le\! \sum_{s=t}^{+\infty}\!\frac{\|{\MK}'\|^s \|\HK\|}{\lmax}\\
    &\le \frac{\|\HK\|}{\lmax}\sum_{s=t}^{+\infty}\left(1-\frac{\lmin}{\lmax}\right)^s\le \frac{\|\HK\|}{\lmin}e^{-\frac{\lmin}{\lmax}t},
\end{align*}
suggesting that the approximation error $\|\LK-L^{(H),t}\|$ goes to zero at an exponential rate.
\paragraph{Step 4: Combining the observations together} In this step, we seek to find an appropriate threshold $t_0$ mentioned in the previous step.
\begin{align*}
    &\quad \|[\LK_k]_{ij}\| \le  \|[\LK_k- L^{(H),t}_k]_{ij}\| + \|[L^{(H),t}_k]_{ij}\|\\
    &\le\!\!\!\underbrace{\frac{\|\HK\|}{\lmin}e^{-\frac{\lmin}{\lmax}t}}_{\textup{Exponential convergence to } \LK}\!\!+\underbrace{2(c_MNH+1)^{t-1}c_He^{-\gamma_M\dist(i,j)}}_{L^{(H),t}_k \textup{ is } (2(c_MNH+1)^{t-1}c_J, \gamma_M)\textup{-SED}} .
\end{align*}
Intuitively, we want the two terms to be roughly on the same scale. Here, we choose
\begin{equation*}
    t_0 = \left\lfloor\frac{\gamma_M\dist(i,j)}{\ln(c_MNH + 1)+\frac{\lmin}{\lmax}}\right\rfloor + 1.
\end{equation*}
Substituting $t_0$ into the above inequality, we have
\begin{align}
    &\quad \|[\LK_k]_{ij}\|\notag\\
    &\le \frac{\|\HK\|}{\lmin}e^{-\frac{\lmin}{\lmax}t_0}+2(c_MNH+1)^{t_0-1}c_He^{-\gamma_M\dist(i,j)}\notag \\
    & \le \left(\!\frac{\|\HK\|}{\lmin}\!+\!2c_J\!\right)\exp\left(\!{-\frac{\gamma_M\frac{\lmin}{\lmax}\dist(i,j)}{\ln(c_MNH \!+\! 1)\!+\!\frac{\lmin}{\lmax}}}\!\right)\label{eq:gamma_L}
\end{align}
which shows that $\LK_k$ is $\big(\frac{\|\HK\|}{\lmin}+2c_H,\frac{\gamma_M\frac{\lmin}{\lmax}}{\ln(c_MNH + 1)+\frac{\lmin}{\lmax}} \big)$-SED, thus completes the proof sketch.
\begin{remark}\label{rmk:decay-rate}
From the proof sketch, we can see that the SED rate $\gamma_K$ in \eqref{eq:gamma_L} depends on  two main factors: the first is the SED rate $\gamma_M$ for $M_{km}, H_k$, and the second is the condition number $\frac{\lmax}{\lmin}$ of $\MK$. Note that the SED rate $\gamma_M$ heavily rely on the SED rate of the system matrices $\gammasys$ itself. Further, it can be shown that the condition number of $\MK$ is smaller if $A$ is more stable, i.e., $\rho$ is larger (see Lemma \ref{lemma:bound-wM}). Thus we may conclude that $\gamma_K$ will be larger, i.e. the decaying rate is faster, if system matrices have larger decaying rate $\gammasys$, or the exponential stability factor $\rho$ is larger. \qed 
\end{remark}

\section{Numerical Simulation}\label{sec:numerics}
\subsection{A toy example}\label{sec:toy-example}
In this section we consider a toy example to confirm our results. The toy example shares the same $B,Q,R,S$ matrices as the counterexample defined in \eqref{eq:counter-example}, whereas $A$ is set as a stable matrix $A = e^{-\rho} I$. Clearly the above system satisfies both Assumption \ref{assump:exponential-decay} and \ref{assump:A-exponential-stable}. Figure \ref{fig:counter-example-stable-heatmap} plots the heatmap of the absolute values of the entries of $K$, visually demonstrating that entries of $K$ is spatially decaying, which forms a sharp contrast with the counter example (figure \ref{fig:counter-example-1}) in Section \ref{sec:preliminaries}. Figure \ref{fig:toy-example} demonstrates that $\|K_{ij}\|$ indeed decays exponentially with respect to $\dist(i,j)$, and that the decay rate is faster when $\rho$ is larger. This phenomenon can be explained by Remark \ref{rmk:decay-rate}, which concludes that the decay rate is faster if $\rho$ is larger.
\vspace{-5pt}
\begin{figure}[htbp]
    \centering
    \begin{minipage}{.47\linewidth}
    \includegraphics[width = .95\textwidth]{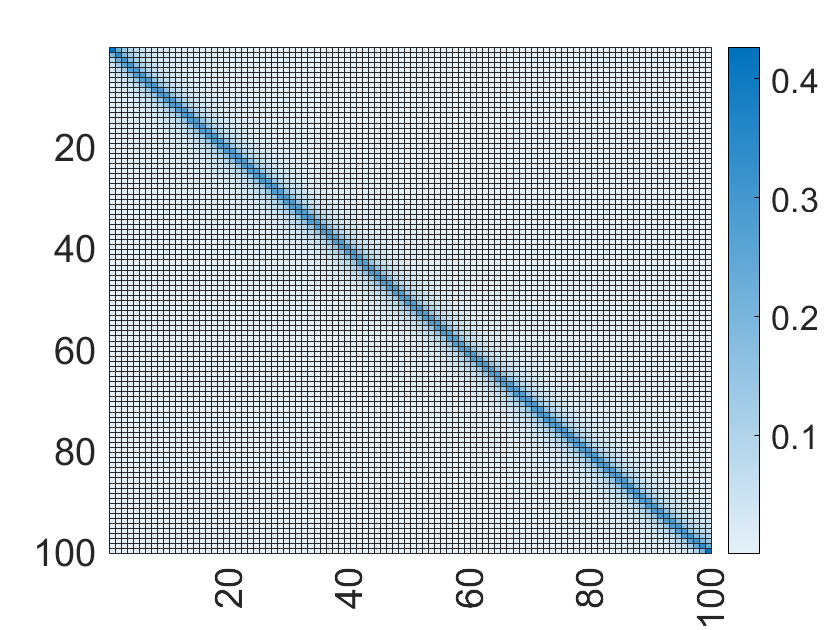}
    \caption{\small Heatmap of abs($K$), \\for $\rho = .1$}
    \label{fig:counter-example-stable-heatmap}
    \end{minipage}
    \begin{minipage}{.47\linewidth}
    \includegraphics[width = .95\textwidth]{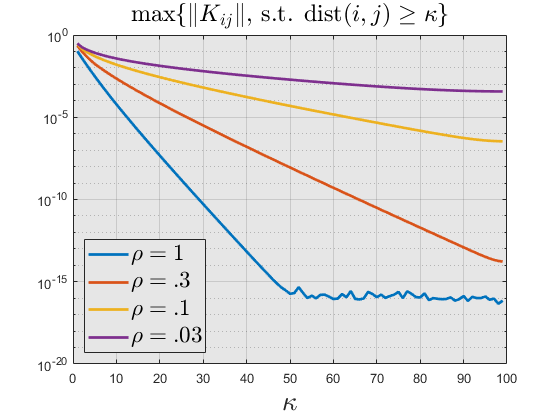}
    \caption{\small Numerical result for the toy example.}
    \label{fig:toy-example}
    \end{minipage}
\end{figure}
\vspace{-10pt}
\subsection{Practical linear systems}\label{subsec:numerical-practical}
\paragraph{Case Study 1 - Thermal Dynamics}
We adopt the linear thermal dynamic model used in \cite{zhang16decentralized} with additional process noise. Here it is assumed that all the constant disturbances (e.g., heat from external sources) are eliminated by expressing the state/control variables as deviation variables around the desired steady state; hence we obtain the following continuous time LQR problem for thermal dynamics
\begin{align*}
\textstyle \min_{\{u(t)\}}\int_0^{+\infty} \sum_{i=1}^N \alpha x_i(t)^2 + u_i(t)^2\qquad\\
\textstyle\dot{x_i} = \sum_{j\neq i, j\in \mathcal{N}_i} \frac{1}{v_i \zeta_{ij}} (x_j - x_i) + \frac{1}{v_i} u_i + \dot w_i.
\end{align*}
where $v_i$ is the thermal capacitance of zone $i$, $\zeta_{ij}$ represents the thermal resistance between two neighboring zones $i$ and $j$. Here we slightly abuse the notation and use $\dot w_i$ to denote the random disturbance for zone $i$.
The system parameters are set as $\zeta_{ij} = 1$\textcelsius/kW, $v_i = 200 + 20\times \mathcal{N}(0,1)$ kJ/\textcelsius, $\alpha = 3$, . We consider the underlying thermal interaction graph as a $10\times 10$ grid and discretize the continuous-time problem with $\Delta t = 1/4$ hour (detailed discretization scheme see Appendix \ref{apdx:why-SED} Eq\eqref{eq:discretization}).

\begin{figure}[htbp]
    \centering
    {\footnotesize Case Study 1: Thermal Dynamics}
    
    \includegraphics[width = .23\textwidth]{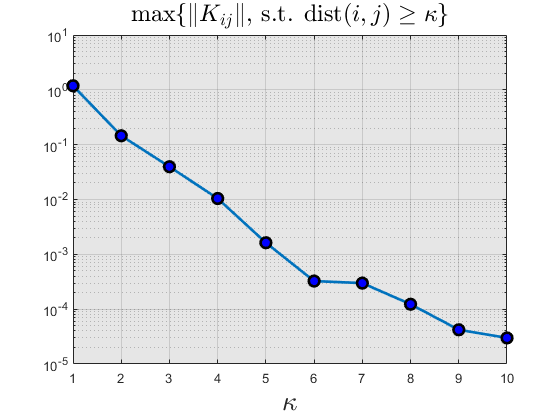}
    \includegraphics[width = .23\textwidth]{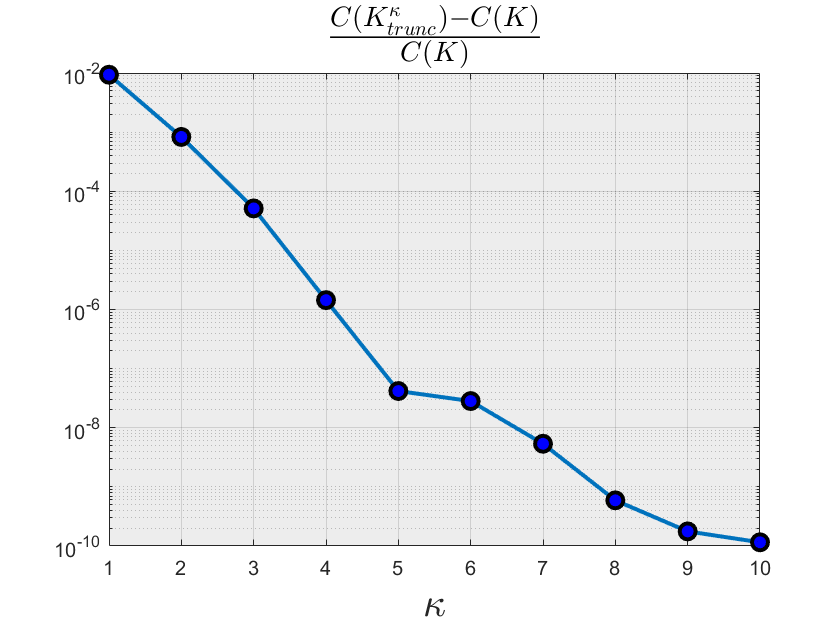}
    
    {\footnotesize Case Study 2: Frequency Control}
    
    \includegraphics[width = .23\textwidth]{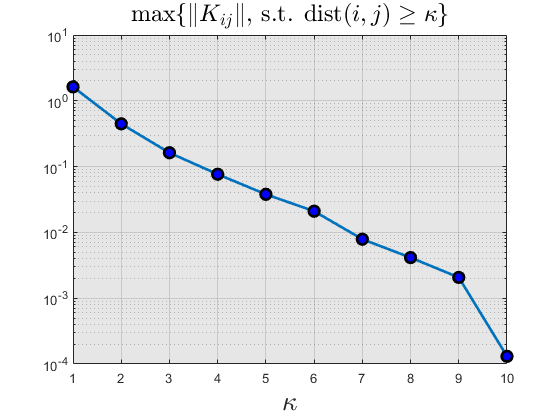}
    \includegraphics[width = .23\textwidth]{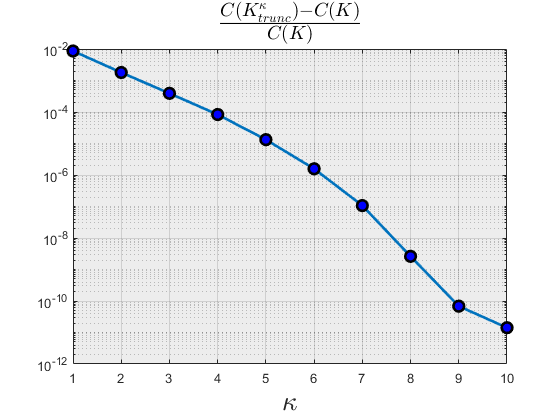}
    \caption{ Numerical results for Case Study 1 and 2. Figures on the left demonstrate how $\|K_{ij}\|$ decays w.r.t. $\kappa$ and figures on the right plot the performance difference ratio $\frac{C(\Ftrunc) - C(K)}{C(K)}$ for each example.}
    \label{fig:case-studies}\vspace{-15pt}
\end{figure}

\paragraph{Case Study 2 - Frequency control of power system}
We adopt the classical DM-approximated model for frequency control of power networks \cite{andreasson2014distributed}.
\begin{align*}
&\textstyle \min_{\{u(t)\}} \int_0^{+\infty} \sum_{i=1}^N \alpha_1 \theta_i(t)^2 + \alpha_2 \omega_i(t)^2 + u_i(t)^2\\
&\textstyle ~~~\dot{\theta_i} = \omega_i,~~\dot{\omega_i} =-\sum_{i\neq j,j\in\mathcal{N}_i}k_{ij}(\theta_j - \theta_i) + b_i u_i
\end{align*}
where $\theta_i,\omega_i$ are the phase angle and frequency of bus $i$ respectively. $k_{ij} = \frac{\ell_{ij}V_{\text{ref}^2}}{M}$, $\ell_{ij}$ is the line susceptance. Here we set $V_{\text{ref}} = 132$kV, $M = 10^5$kgm\textsuperscript{2}, $b_i = .1$, and $\alpha_1 = \alpha_2 = .5$. We use the power flow data for IEEE 145 bus to calculate the $\ell_{ij}$ as well as the graph structure. We discretize the continuous-time problem with $\Delta t \!=\! 5\!\times\! 10^{-6}$s. The decaying structure of $K$ is demonstrated in Figure \ref{fig:case-studies}, where we can observe that $\|K_{ij}\|$ indeed decays exponentially with respect to $\dist(i,j)$, and that the $\kappa$-truncated local  controller can already have near-optimal performance for relatively small $\kappa$.



\section{Conclusions and Future Directions}
This paper explores the spatially decaying structure for infinite-horizon discrete time LQR problem. We focus on the setting where the LQR problem is spatially exponential decaying (SED) and show that the optimal LQR state feedback gain $K$ is `quasi'-SED in this setting under certain stability conditions. Based on this result, we also analyze the near-optimal performance of $\kappa$-truncated local controllers in this setting. Additionally, as a side product of our proof, we also demonstrate that the optimal finite-truncated disturbance response controller also satisfies the `quasi'-SED property.

We believe that this is a fruitful direction with many interesting open questions, to name a few, whether we could improve the `quasi'-SED rate so that the $\textup{poly}\ln(N)$ term can be removed, how to handle infinite number of agents (i.e. $N\to +\infty$) and continuous time settings, and how to efficiently learn a $\kappa$-truncated local controller in the sample-based settings etc. In the long run, we hope to incorporate the results in this papers to more clever design of distributed learning and control of multi-agent systems. 



\bibliography{bib.bib}
\bibliographystyle{IEEEtran}

\appendix
\subsection{A Counter-example}\label{apdx:counter-example}

To show that the counter-example in Section \ref{sec:main-result} (Equation \ref{eq:counter-example}) does not contradict our theoretical result (Theorem \ref{theorem:spatial-decay-F} and Corollary \ref{coro:spatial-decay-F-unstable-A}), we now demonstrate that it violates Assumption \ref{assump:A-exponential-stable} and \ref{assump:stabilizability}. Since $A = 1.1I$, it clearly does not satisfy Assumption \ref{assump:A-exponential-stable}. The following lemma implies that it does not satisfy Assumption \ref{assump:stabilizability} either.
\begin{lemma}\label{lemma:counter-example}
For $A = cI$, $c\ge1$ and $B$ defined as in \eqref{eq:counter-example}, for any $\tau\ge 1,\rho >0$, there is no $\|K_0\|\le \frac{1-e^{-\rho}}{\tau} \sqrt{N}$ such that $A-BK_0$ is $(\tau,e^{-\rho})$-stable.
\end{lemma}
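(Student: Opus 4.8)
The plan is to argue by contradiction: supposing $A-BK_0$ is $(\tau,e^{-\rho})$-stable, I will show that $\|K_0\|$ is then forced to exceed $\frac{1-e^{-\rho}}{\tau}\sqrt N$. The first move is a scaling reduction to the case $c=1$. Writing $M:=A-BK_0=cI-BK_0$, the $(\tau,e^{-\rho})$-stability is the bound $\|M^k\|\le\tau e^{-\rho k}$ for all $k\ge 0$. Setting $M':=c^{-1}M$, we get $\|(M')^k\|=c^{-k}\|M^k\|\le\tau e^{-\rho k}$ because $c\ge 1$, so $M'$ is again $(\tau,e^{-\rho})$-stable; moreover $M'=c^{-1}(cI-BK_0)=I-B(c^{-1}K_0)$, i.e. $K_0':=c^{-1}K_0$ stabilizes the system with $A$ replaced by $I$. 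Since $c\ge 1$ gives $\|K_0\|=c\|K_0'\|\ge\|K_0'\|$, it is enough to prove $\|K_0'\|>\frac{1-e^{-\rho}}{\tau}\sqrt N$.

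Next I would invoke the Neumann series. By Gelfand's formula and $\|(M')^k\|\le\tau e^{-\rho k}$, the spectral radius of $M'$ is at most $e^{-\rho}<1$, so $I-M'=BK_0'$ is invertible and $\|(BK_0')^{-1}\|=\big\|\sum_{k\ge 0}(M')^k\big\|\le\sum_{k\ge 0}\tau e^{-\rho k}=\frac{\tau}{1-e^{-\rho}}$. On the other hand, since $B$ is invertible (unit upper-triangular) and hence so is $K_0'=B^{-1}(I-M')$, we have $(BK_0')^{-1}=K_0'^{-1}B^{-1}$, and the elementary bound $\|K_0'^{-1}B^{-1}\|\ge\|K_0'\,K_0'^{-1}B^{-1}\|/\|K_0'\|=\|B^{-1}\|/\|K_0'\|$ gives $\|B^{-1}\|/\|K_0'\|\le\frac{\tau}{1-e^{-\rho}}$, i.e. $\|K_0'\|\ge\frac{1-e^{-\rho}}{\tau}\|B^{-1}\|$.

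The last ingredient is a strict lower bound $\|B^{-1}\|>\sqrt N$ (valid for $N\ge 2$) for the specific bidiagonal $B$ of \eqref{eq:counter-example}. Since $B$ is $I$ plus the nilpotent upper shift, $B^{-1}=\sum_{k=0}^{N-1}(-1)^kN^k$ has $(i,j)$ entry $(-1)^{j-i}$ for $j\ge i$ and $0$ otherwise; applying $B^{-1}$ to the test vector $v=(1,-1,1,\dots,(-1)^{N-1})^\top$ yields $(B^{-1}v)_i=(-1)^{i+1}(N-i+1)$, so $\|B^{-1}\|^2\ge\|B^{-1}v\|^2/\|v\|^2=\frac1N\sum_{m=1}^N m^2=\frac{(N+1)(2N+1)}{6}$, which is strictly larger than $N$ exactly when $(2N-1)(N-1)>0$, i.e. for $N\ge 2$. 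Chaining everything, $\|K_0\|\ge\|K_0'\|\ge\frac{1-e^{-\rho}}{\tau}\|B^{-1}\|>\frac{1-e^{-\rho}}{\tau}\sqrt N$, contradicting $\|K_0\|\le\frac{1-e^{-\rho}}{\tau}\sqrt N$.

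The step I would be most careful about — and the only genuinely delicate point — is making the final inequality \emph{strict}, so that the borderline value $\|K_0\|=\frac{1-e^{-\rho}}{\tau}\sqrt N$ is actually ruled out; this is precisely why the argument passes through the sharp estimate $\|B^{-1}\|>\sqrt N$ rather than the immediate $\|B^{-1}\|\ge\|B^{-1}e_N\|=\sqrt N$, and why the scaling reduction is stated with $c\ge 1$. Everything else is routine operator-norm and Neumann-series manipulation.
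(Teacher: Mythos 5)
Your argument is correct, and it takes a genuinely different route from the paper's. The paper never inverts anything: it picks the alternating-sign vector $\alpha$ (so that $\|B^\top\alpha\|=1$ while $\|\alpha\|=\sqrt N$) and telescopes $\alpha^\top(A-BK_0)^k$ against the stability bound $\|(A-BK_0)^k\|\le\tau e^{-\rho k}$, letting $k\to\infty$ to conclude $\|K_0\|\ge\frac{1-e^{-\rho}}{\tau}\sqrt N$. You instead exploit that $B$ is unit upper-triangular, hence invertible: the Neumann series gives $\|(BK_0')^{-1}\|=\|(I-M')^{-1}\|\le\frac{\tau}{1-e^{-\rho}}$, submultiplicativity then yields the clean inequality $\|K_0\|\ge\|K_0'\|\ge\frac{1-e^{-\rho}}{\tau}\|B^{-1}\|$, and an explicit test-vector computation bounds $\|B^{-1}\|$ from below. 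The two proofs are secretly related—your test vector for $\|B^{-1}\|$ is essentially the paper's $\alpha$, and the paper's choice of $\alpha$ is implicitly a certificate that $\|B^{-1}\|\ge\sqrt N$—but your route buys something extra: you get $\|B^{-1}\|\ge\sqrt{(N+1)(2N+1)/6}\sim N/\sqrt3$, so the true obstruction is of order $N$, not $\sqrt N$, and your inequality is strict for $N\ge2$, whereas the paper's limit argument only gives the non-strict bound (so, strictly speaking, neither proof excludes the equality case for $N=1$, a degenerate situation irrelevant to the counter-example with $N=100$). The price is that your argument leans on $B$ being square and invertible, so it is tailored to this example; but the paper's telescoping argument is equally example-specific (it needs $\|B^\top\alpha\|$ small), so nothing of substance is lost. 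Two cosmetic points: you use the same symbol for the dimension $N$ and the nilpotent shift in the formula for $B^{-1}$, which should be renamed; and the scaling step could be skipped by bounding $\|(cI-M)^{-1}\|\le c^{-1}\sum_k c^{-k}\tau e^{-\rho k}\le\frac{\tau}{1-e^{-\rho}}$ directly, though your reduction is perfectly valid.
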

Define $\alpha \in \mathbb{R}^N$ as
\begin{align*}
    \alpha_i = \left\{
    \begin{array}{cc}
        1 & i \text{ is odd} \\
        -1 & i \text{ is even} 
    \end{array}
    \right..
\end{align*}
Define $\delta = B^\top \alpha$. We can quickly verify that $\|\delta\| = 1$ and $\|\alpha\| = \sqrt{N}$. For any $K_0$ such that $(A-BK_0)$ is $(\tau,e^{-\rho})$-stable:
\begin{align*}
    \tau e^{-k\rho}\|\alpha\| &\ge \|\alpha^\top (A-BK_0)^k\|\\
    &= \|c\alpha (A-BK_0)^{-k}-\alpha B K_0 (A-BK_0) ^{k-1}\|\\
    &\ge \|\alpha (A-BK_0)^{-k}\| - \|\delta\|\|K_0\|\tau e^{-\rho(k-1)}\\
    &\ge \dots\\
    &\ge \|\alpha\| - \|\delta\|\|K_0\|\sum_{t=0}^k\tau e^{-\rho t}\\
    &\ge \|\alpha \| - \frac{\tau}{1-e^{\rho}}\|\delta\|\|K_0\|\\
\Longrightarrow &\frac{\tau}{1-e^{\rho}}\|\delta\|\|K_0\|\ge \|\alpha \| - \tau e^{-k\rho}\|\alpha\|\\
\Longrightarrow &\|K_0\|\ge (1 - \tau e^{-k\rho})\frac{1-e^{-\rho}}{\tau}\sqrt{N}
\end{align*}
Set $k\to +\infty$ completes the proof.
Lemma \ref{lemma:counter-example} shows that any $K_0$ such that $A-BK_0$ is $(\tau, e^{-\rho})$-stable must be of norm $\gtrsim \sqrt{N}$, which leads to $\gamma_F \lesssim \frac{1}{N}$, making the rate invalid because the maximum distance for $\mathbb{Z}_N$ is $\lceil  N/2 \rceil $.

\subsection{Proof of Lemma \ref{lemma:optimal-L-K}}\label{apdx:optimal-LK}
We further define some variables that will be used in this section. Define the matrices $\Lambda_{km}\in \bR^{n_x\times n_x}$ as:
\begin{align}\label{eq:def-Lambda}
   \Lambda_{km}&:= \begin{cases}
         GA^{k-m}, &  k\ge m\\
         (A^{m-k})^\top G, & k<m 
    \end{cases} , \quad k,m\ge 1.
\end{align}
Define  ${\bf{A}}^{(H)} \in \mathbb{R}^{n_x\times H n_x}$ as:
\begin{align*}
    {\bf{A}}_k^{(H)} &:= 
    \begin{cases}
        [A^{k-1}, A^{k-2},\dots, A, I, 0,\dots, 0], & k< H \\
        \left[A^{k-1},A^{k-2},\dots,A^{k-K}\right], & k\ge H
    \end{cases}
\end{align*}
Then, define $\mathbf\Lambda^{(H)} \!\in\! \bR^{Hn_x\times Hn_x }, \XiK\!\in\! \bR^{Hn_x\times Hn_x }, \BK\in\bR^{Hn_x \times Hn_u},\RK\in\bR^{Hn_u\times Hn_u},\SK\in\bR^{Hn_u\times Hn_x}$ as
\begin{align}
   \mathbf\Lambda^{(H)} &\!:=
    \begin{bmatrix}
         \Lambda_{11}& \Lambda_{12}&\cdots &\Lambda_{1H} \\
         \Lambda_{21}& \Lambda_{22}&\cdots &\Lambda_{2H}\\
         \vdots&\vdots & &\\
         \Lambda_{H1} & \Lambda_{H2}&\cdots &\Lambda_{HH}
    \end{bmatrix},~~
    \XiK\!:= \begin{bmatrix}
     0\\
     \mathbf A_1^{(H)}\\
     \vdots\\
     \mathbf A_{H-1}^{(H)}
    \end{bmatrix}\label{eq:def-Lambda-K},\\
    \BK&:= 
    \begin{bmatrix}
         B& &\\
          & \ddots&\\
          & &B
    \end{bmatrix},\quad 
    \RK:= \begin{bmatrix}
         R& &\\
          & \ddots&\\
          & &R
    \end{bmatrix},\vspace{3pt}\\
    &\qquad\qquad \qquad  \SK:= \begin{bmatrix}
         S& &\\
          & \ddots&\\
          & &S
    \end{bmatrix}. \label{eq:def-BK-RK}
\end{align}

We first prove the following lemma which writes out $C(\LK)$ explicitly.
\begin{lemma}
The cost function defined in \eqref{eq:LQR-disturbance-feedback} satisfies
\begin{equation*}
\begin{split}
    C(\LK) &\!=\! \tr\!\left(\!
    [I, \LK{}^\top]\!\left(\!
    \begin{bmatrix}
        I & 0 \\
        0 & \BK
    \end{bmatrix}^{\!\top}
    \!\!\!\LambdaK\!
    \begin{bmatrix}
        I & 0 \\
        0 & \BK
    \end{bmatrix}
    \right.\right.\\
    &+ \begin{bmatrix}
        0 & 0 \\
        0 & \RK
    \end{bmatrix}
    + 
    \begin{bmatrix}
        0 & 0 \\
        0 & \SK
    \end{bmatrix}
    \mathbf{\Xi}^{(H+1)}
    \begin{bmatrix}
        I & 0 \\
        0 & {\BK}
    \end{bmatrix}\\
   & +     \begin{bmatrix}
        I & 0 \\
        0 & {\BK}^\top
    \end{bmatrix}
    \mathbf{\Xi}^{(H+1)^\top}
\begin{bmatrix}
        0 & 0 \\
        0 & {\SK}^\top
    \end{bmatrix}
    \bigg)
    \begin{bmatrix}
         I  \\
         \LK 
    \end{bmatrix}
    \bigg).
\end{split}
\end{equation*}
\end{lemma}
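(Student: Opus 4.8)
## Proof Proposal for the Cost Function Formula

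\textbf{Overall approach.} The plan is to start from the disturbance-response dynamics and unroll the closed-loop trajectory into an explicit linear map from the disturbance history $(w_{t-1},\dots,w_{t-H})$ to the current state $x_t$ and control $u_t$. Since $w_s \sim \mathcal N(0,I)$ are i.i.d., the stationary cost $\lim_{T\to\infty}\frac1T\mathbb E\sum_t (x_t^\top Q x_t + u_t^\top R u_t + 2 u_t^\top S x_t)$ will be a trace of a quadratic form in these linear maps. The bookkeeping device for packaging this is exactly the block matrices $\mathbf\Lambda^{(H)}$, $\mathbf A_k^{(H)}$, $\XiK$, $\BK$, $\RK$, $\SK$ introduced above; the claimed identity is just the result of substituting the unrolled expressions and grouping terms into these blocks.

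\textbf{Key steps, in order.} First, I would write $u_t = \LK\, \bar w_t$ where $\bar w_t := [w_{t-1};\dots;w_{t-H}]\in\bR^{Hn_x}$, and then propagate the dynamics $x_{t+1}=Ax_t+Bu_t+w_t$ forward from a zero (or stationary) initialization, obtaining $x_t = \sum_{j\ge 1} A^{j-1} w_{t-j} + \sum_{j\ge 1} A^{j-1} B u_{t-j}$. The truncation $w_s=0$ for $s<0$ together with $u_s$ depending only on $w_{s-1},\dots,w_{s-H}$ makes this a finite expression once we also recognize that in steady state only finitely many disturbance lags matter for the cost, up to exponentially small tails controlled by Assumption \ref{assump:A-exponential-stable}; more precisely, the state's dependence on the disturbance window is captured by the rows $\mathbf A_k^{(H)}$ of the ``rolled-out'' state-transition operator. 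Second, I would assemble the stacked state vector $\mathbf x_t := [x_{t};\dots;x_{t-H+1}]$ and express it as $\mathbf x_t = \XiK\,(w\text{-history}) + (\text{terms involving }\BK\LK)$, so that $\mathbf x_t$ becomes an affine function of the same disturbance window; the cross-term structure of $S$ acting on state-control pairs is what forces the off-diagonal $\XiK$ / $\SK$ blocks. Third, using $\mathbb E[w_{t-i}w_{t-j}^\top] = \delta_{ij} I$, the stationary cost collapses to the trace of the Gram matrix of the combined map $[I;\LK]$ sandwiched against the cost weights; here $\LambdaK$ (with $\Lambda_{km}$ as in \eqref{eq:def-Lambda}, built from $G=\sum_{t\ge0}(A^t)^\top Q A^t$) is precisely the matrix that encodes $\mathbb E[x_t^\top Q x_t]$ and the $Q$-weighted cross terms after summing the geometric series in $A$. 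Finally, I would collect the $Q$-part (the $\LambdaK$ block conjugated by $\mathrm{diag}(I,\BK)$), the pure $R$-part (the $\RK$ block), and the two symmetric $S$-cross-parts (the $\SK\,\mathbf\Xi^{(H+1)}$ and $\mathbf\Xi^{(H+1)\top}\SK^\top$ blocks) into the single $(n_x+Hn_u)\times(n_x+Hn_u)$ matrix displayed in the lemma.

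\textbf{Main obstacle.} The hard part will be the precise index/truncation accounting: showing that the infinite sums over disturbance lags and over the powers of $A$ both genuinely close into the stated \emph{finite} block matrices, with the dimension $H{+}1$ appearing (rather than $H$) in $\LambdaK$, $\mathbf\Xi^{(H+1)}$, $\mathbf A_k^{(H+1)}$, and that the $w_s=0$-for-$s<0$ convention is consistent with taking the $T\to\infty$ stationary average. In particular one must verify that the ``extra'' row/column — the plain $I$ block paired with $\LK$ — correctly absorbs the direct contribution of $w_t$ itself and the degree-$0$ term $A^0=I$ in $\mathbf A_k^{(H+1)}$, and that the off-diagonal placement of $\mathbf A^{(H)}$ inside $\XiK$ (shifted down by one block, with a leading zero block) is exactly what is needed so that the $S$-cross-term matches $2\,\mathbb E[u_t^\top S x_t]$. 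The rest — expanding, using i.i.d.-ness of $w$, and summing geometric series in $A$ — is routine linear algebra, and the convergence of every series is guaranteed by the open-loop exponential stability $\|A^k\|\le \tau e^{-\rho k}$ from \eqref{eq:open-closed-loop-stable}.
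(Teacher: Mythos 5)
Your proposal follows essentially the same route as the paper's proof: the paper likewise unrolls the closed loop into the impulse-response form $x_t=\sum_{k\ge 1}T_k w_{t-k}$ with $T_k=\AK\,\diag(I,\BK)\,[I;\LK]$ (it merely does the bookkeeping through $z$-transforms, which is the same computation as your time-domain convolution), uses the i.i.d.\ noise to reduce the stationary cost to traces of $\sum_{k\ge1} T_k^\top Q T_k$, $\sum_{k=1}^{H}\LK_k{}^\top R\,\LK_k$ and $\sum_{k=1}^{H}\LK_k{}^\top S\,T_k$, and sums the geometric series in $A$ into $G$ so that $\sum_{k\ge 1}\AK{}^\top Q\,\AK=\LambdaK$. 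The only slip is cosmetic: $\mathbf{\Xi}^{(H+1)}$ is not a map to the backward stack $[x_t;\dots;x_{t-H+1}]$ but the stack of a leading zero block and the kernels $\mathbf{A}_1^{(H+1)},\dots,\mathbf{A}_H^{(H+1)}$ (the lag-$1$ through lag-$H$ pieces of the impulse response of $x_t$), which is exactly what pairs with $u_t=\sum_{k=1}^{H}\LK_k w_{t-k}$ in the $S$ cross term and is what your steps 1 and 3 produce once carried out.
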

\begin{proof}
We denote the $z$-transform of $\{x_t\}, \{u_t\}, \{w_t\}$ as 
\begin{align*}
    X(z):=\sum_{t=0}^\infty z^{-t} x_t,~~ U(z):=\sum_{t=0}^\infty z^{-t} u_t, ~~W(z):=\sum_{t=0}^\infty z^{-t} w_t.
\end{align*}
Let $$\LK(z):= z^{-1}\LK_1 + z^{-2}\LK_2 +\dots + z^{-H}\LK_H.$$
Then we have 
\begin{align*}
    \begin{cases}
    U(z) = \LK(z)W(z)\\
    X(z) = z^{-1}\left(AX(z)+BU(z)+W(z)\right) 
    \end{cases}\\
    \Rightarrow\quad 
    (I-z^{-1}A)X(z) = z^{-1}(I+B\LK(z))W(z),
\end{align*}
which gives
\begin{align*}
    &\quad X(z)= z^{-1}(I-z^{-1}A)^{-1}(I+B\LK(z))W(z)\\
    &= z^{-1}(I+z^{-1}A+z^{-2}A^2 +\dots)\\
    &\qquad(I+z^{-1}B\LK_1+z^{-2}B\LK_2 +\dots z^{-H}B\LK_H)W(z)\\
    & = \left(z^{-1}I + z^{-2} (A + B\LK_1)\right. \\
    &\qquad \left.+ z^{-3} (A^2 + AB\LK_1 + B\LK_2) + \cdots\right)W(z)\\
    &= \left(z^{-1}T_1 + z^{-2}T_2 + \cdots+ z^{-k} T_k + \cdots\right)W(z),
\end{align*}
where (we can derive by algebraic calculation)
\begin{align*}
    T_k = \AK 
    \begin{bmatrix}
        I & 0 \\
        0 & \BK
    \end{bmatrix}
    \begin{bmatrix}
         I  \\
         \LK 
    \end{bmatrix}.
\end{align*}
Thus we have that
\begin{align*}
    &\quad \lim_{T\to+\infty} \frac{1}{T}\mathbb{E}\sum_{t=0}^{T-1}x_t^\top Qx_t = \lim_{T\to+\infty}\frac{1}{T}\mathbb{E}\sum_{t=0}^{T-1}\tr(x_tx_t^\top Q) \\
    & = \tr\left(\sum_{t=1}^{+\infty} T_tT_t^\top Q\right)\quad \textup{(Lemma \ref{lemma:tf-second-moment})}\\
    & = \tr\left(\sum_{t=1}^{+\infty} T_t^\top QT_t\right) \\
    &= \tr\left([I,\LK{}^\top]
    \begin{bmatrix}
        I & 0 \\
        0 & \BK
    \end{bmatrix}^\top\left(\sum_{t=1}^{+\infty}\mathbf{A}_t^{(H+1)}{}^\top Q\mathbf{A}_t^{(H+1)}\right)\right.\\
    &\qquad\qquad\qquad\qquad\qquad\qquad\qquad\quad\left.\quad 
    \begin{bmatrix}
        I & 0 \\
        0 & \BK
    \end{bmatrix}
    \begin{bmatrix}
         I  \\
         \LK 
    \end{bmatrix}\right)\\
    &= \tr\left(\![I,\LK{}^\top\!]\!
    \begin{bmatrix}
        I & \!\!0 \\
        0 & \!\!\BK
    \end{bmatrix}^\top\!\!\!\!\!\!\LambdaK\!
    \begin{bmatrix}
        I & 0 \\
        0 & \BK
    \end{bmatrix}\!\!
    \begin{bmatrix}
         I  \\
         \LK 
    \end{bmatrix}\!\right),
\end{align*}
where the last equation follows from Lemma \ref{lemma:matrix-M} in Appendix \ref{apdx:auxiliary}. Similarly
\begin{align*}
    &\quad \lim_{T\to+\infty} \frac{1}{T}\mathbb{E}\sum_{t=0}^{T-1}u_t^\top Sx_t = \lim_{T\to+\infty}\frac{1}{T}\mathbb{E}\sum_{t=0}^{T-1}\tr(x_tu_t^\top S) \\
    & = \tr\left(\sum_{t=1}^{K} T_t{\LK_t}^\top S\right)\quad \textup{(Lemma \ref{lemma:tf-second-moment})}\\
    & = \tr\left(\sum_{t=1}^{K} {\LK_t}^\top ST_t\right) \\
    &= \tr\left(\LK{}^\top \SK \begin{bmatrix}
         T_1\\
         \vdots\\
         T_H
    \end{bmatrix}\right) \\
    &= \tr\left(
    \LK{}^\top\SK
    \begin{bmatrix}
         \mathbf{A}_1^{(H+1)}\\
         \vdots\\
         \mathbf{A}_H^{(H+1)}
    \end{bmatrix}
    \begin{bmatrix}
        I & 0 \\
        0 & \BK
    \end{bmatrix}
    \begin{bmatrix}
         I  \\
         \LK 
    \end{bmatrix}\right)\\
    &= \tr\!\left(\![I,\LK{}^\top]\!
     \begin{bmatrix}
        0 & 0 \\
        0 & \SK
    \end{bmatrix}
    \mathbf{\Xi}^{(H+1)}\!
    \begin{bmatrix}
        I & 0 \\
        0 & {\BK}
    \end{bmatrix}\!
    \begin{bmatrix}
         I  \\
         \LK 
    \end{bmatrix}\!\right).
\end{align*}

Additionally
\begin{align*}
    &\quad \lim_{T\to+\infty} \frac{1}{T}\mathbb{E}\sum_{t=0}^{T-1}u_t^\top Ru_t = \lim_{T\to+\infty} \frac{1}{T}\mathbb{E}\sum_{t=0}^{T-1}\tr(u_tu_t^\top R) \\
    &= \tr\left(\sum_{t=1}^{K} \LK_t\LK_t{}^\top R\right)\quad \textup{(Lemma \ref{lemma:tf-second-moment})}\\
    & = \tr\left(\sum_{t=1}^{K} \LK_t{}^\top R\LK_t\right) = \tr\left(\LK{}^\top \RK~\LK\right).
\end{align*}
Combining the two equations together we have that
\begin{equation*}
\begin{split}
    C(\LK&) \!=\! \tr\!\left(\!
    [I, \!\LK{}^\top]\!\left(\!
    \begin{bmatrix}
        I & 0 \\
        0 & \BK
    \end{bmatrix}^\top
    \!\!\!\!\!\LambdaK\!
    \begin{bmatrix}
        I & 0 \\
        0 & \BK
    \end{bmatrix}\right.\right.\\
    &
    + \begin{bmatrix}
        0 & 0 \\
        0 & \RK
    \end{bmatrix}
    + 
    \begin{bmatrix}
        0 & 0 \\
        0 & \SK
    \end{bmatrix}
    \mathbf{\Xi}^{(H+1)}
    \begin{bmatrix}
        I & 0 \\
        0 & {\BK}
    \end{bmatrix}\\
    &\left.\left.+     \begin{bmatrix}
        I & 0 \\
        0 & {\BK}^\top
    \end{bmatrix}
    \mathbf{\Xi}^{(H+1)^\top}
\begin{bmatrix}
        0 & 0 \\
        0 & {\SK}^\top
    \end{bmatrix}
    \right)
    \begin{bmatrix}
         I  \\
         \LK 
    \end{bmatrix}
    \right).
\end{split}
\end{equation*}
\end{proof}

Lemma \ref{lemma:optimal-L-K} is a direct corollary of the above lemma.
\begin{proof}(of Lemma \ref{lemma:optimal-L-K})

\begin{align*}
    C(\LK&) \!=\! \tr\!\left(\!
    [I, \!\LK{}^\top]\!\left(\!
    \begin{bmatrix}
        I & 0 \\
        0 & \BK
    \end{bmatrix}^\top
    \!\!\!\!\!\LambdaK\!
    \begin{bmatrix}
        I & 0 \\
        0 & \BK
    \end{bmatrix}\right.\right.\\
    &
    + \begin{bmatrix}
        0 & 0 \\
        0 & \RK
    \end{bmatrix}
    + 
    \begin{bmatrix}
        0 & 0 \\
        0 & \SK
    \end{bmatrix}
    \mathbf{\Xi}^{(H+1)}
    \begin{bmatrix}
        I & 0 \\
        0 & {\BK}
    \end{bmatrix}\\
    &\left.\left.+     \begin{bmatrix}
        I & 0 \\
        0 & {\BK}^\top
    \end{bmatrix}
    \mathbf{\Xi}^{(H+1)^\top}
\begin{bmatrix}
        0 & 0 \\
        0 & {\SK}^\top
    \end{bmatrix}
    \right)
    \begin{bmatrix}
         I  \\
         \LK 
    \end{bmatrix}
    \right).\\
    &=\tr\left(
    [I, \LK{}^\top]
    \begin{bmatrix}
        \Lambda_{11} & \HK{}^\top \\
        \HK & \MK
    \end{bmatrix}
    \begin{bmatrix}
         I  \\
         \LK 
    \end{bmatrix}
    \right),
\end{align*}
where the last equation can be verified by the definition of $\LambdaK,\XiK,\MK,\HK$. The last equation immediately leads to the fact that the optimal $\LK$ should solve
\begin{equation*}
\MK\LK + \HK=0.
\end{equation*}
\end{proof}

\subsection{Proof of Lemma \ref{lemma:spatially-decay-M-H}}\label{apdx:spatially-decay-M-H}
We first state the following helper lemma:
\begin{lemma}\label{lemma:spatially-decaying-G}
$GA^{m}$ is $(\frac{\tau^2\|Q\|}{1-e^{-2\rho}} + 2q,\frac{\gammasys \rho}{(\rho+\ln(aN))})$-SED, for all $m\ge 0$. $SA^m$ is $(s+\tau\|S\|,\frac{\gammasys \rho}{(\rho+\ln(aN))})$-SED, for all $m\ge 0$.
\end{lemma}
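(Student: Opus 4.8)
The plan is to play off the two complementary bounds available for powers of $A$: a \emph{combinatorial} one coming from SED structure (sharp when the matrix exponent is small) and a \emph{stability} one coming from $\|A^t\|\le\tau e^{-\rho t}$ (sharp when the exponent is large), and then split the Neumann series defining $G$ at the exponent where the two bounds cross. Throughout write $d=\dist(i,j)$, $\gamma_M=\frac{\gammasys\rho}{\rho+\ln(aN)}$, and set $n^\ast:=\frac{\gammasys d}{\rho+\ln(aN)}$ (a real number, not rounded); the key elementary identity is $(aN)^{n^\ast}e^{-\gammasys d}=e^{-\rho n^\ast}=e^{-\gamma_M d}$, since $\rho n^\ast=\gamma_M d$ and $n^\ast\ln(aN)-\gammasys d=-\gamma_M d$.

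\emph{Building blocks.} First I would record that $A^t$ is $(N^{t-1}a^t,\gammasys)$-SED for $t\ge1$ by an induction on $t$ via Lemma~\ref{lemma:bound-AB-main-text} applied to $A^t=A^{t-1}A$. Since transposition preserves SED ($\dist$ is symmetric) and a triple product of SED matrices is SED (Lemma~\ref{lemma:bound-AB-main-text} twice), it follows that $(A^k)^\top Q A^{k+m}$ is $\big((aN)^{2k+m}q,\gammasys\big)$-SED and $SA^m$ is $\big((aN)^{m}s,\gammasys\big)$-SED (here $Q$ is $(q,\gammasys)$-SED and $S$ is $(s,\gammasys)$-SED by Assumption~\ref{assump:exponential-decay}). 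On the other hand, \eqref{eq:open-closed-loop-stable} gives the operator-norm bounds $\big\|(A^k)^\top Q A^{k+m}\big\|\le\tau^2\|Q\|e^{-\rho(2k+m)}$ and $\|SA^m\|\le\tau\|S\|e^{-\rho m}$, which apply entrywise as well. The claim for $SA^m$ is then immediate: for fixed $m$, take the smaller of the two entrywise bounds. If $m\le n^\ast$, the SED bound gives $\|[SA^m]_{ij}\|\le s\,(aN)^{m}e^{-\gammasys d}\le s\,(aN)^{n^\ast}e^{-\gammasys d}=s\,e^{-\gamma_M d}$; if $m>n^\ast$, the norm bound gives $\|[SA^m]_{ij}\|\le\tau\|S\|e^{-\rho m}\le\tau\|S\|e^{-\rho n^\ast}=\tau\|S\|e^{-\gamma_M d}$. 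In both cases $\|[SA^m]_{ij}\|\le(s+\tau\|S\|)e^{-\gamma_M d}$.

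\emph{The sum for $GA^m$.} Using \eqref{eq:def-G}, write $GA^m=\sum_{k\ge0}(A^k)^\top Q A^{k+m}$ (convergent in operator norm by the stability bound), so $\|[GA^m]_{ij}\|\le\sum_{k\ge0}\min\big\{(aN)^{2k+m}q\,e^{-\gammasys d},\ \tau^2\|Q\|e^{-\rho(2k+m)}\big\}$. Split this sum according to whether the exponent $n:=2k+m$ satisfies $n\le n^\ast$ or $n>n^\ast$. For $n\le n^\ast$ use the SED bound: these $n$ form an arithmetic progression with common difference $2$ and largest element $\le n^\ast$, so the geometric sum is at most $\frac{(aN)^2}{(aN)^2-1}\,q\,(aN)^{n^\ast}e^{-\gammasys d}\le 2q\,e^{-\gamma_M d}$ (using $a\ge1$, $N\ge2$; the case $N=1$ is vacuous). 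For $n>n^\ast$ use the norm bound: the smallest exponent there is strictly greater than $n^\ast$, so the geometric sum is at most $\frac{\tau^2\|Q\|}{1-e^{-2\rho}}e^{-\rho n^\ast}=\frac{\tau^2\|Q\|}{1-e^{-2\rho}}e^{-\gamma_M d}$. Adding the two groups gives $\|[GA^m]_{ij}\|\le\big(\frac{\tau^2\|Q\|}{1-e^{-2\rho}}+2q\big)e^{-\gamma_M d}$, which is the asserted bound. Note this is automatically uniform in $m$: when $m>n^\ast$ the first group is empty and the second still sums to $\le\frac{\tau^2\|Q\|}{1-e^{-2\rho}}e^{-\gamma_M d}$ because its smallest exponent is $m>n^\ast$.

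The delicate point — and the only thing beyond bookkeeping — is extracting the clean, $m$-independent constants $\frac{\tau^2\|Q\|}{1-e^{-2\rho}}+2q$ and $s+\tau\|S\|$: one must keep the crossover threshold $n^\ast$ exactly at the real-valued balance point (rounding would leak stray $(aN)^{\pm1}$ or $e^{\pm\rho}$ factors into the constant) and check that the argument degrades gracefully once $m$ already exceeds $n^\ast$, so that the combinatorial part of the series is simply vacuous rather than blowing up. Everything else reduces to geometric-series estimates; the only structural inputs are Lemma~\ref{lemma:bound-AB-main-text}, the stability bound \eqref{eq:open-closed-loop-stable}, and the series representation \eqref{eq:def-G} of $G$.
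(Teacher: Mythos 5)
Your proof is correct and follows essentially the same route as the paper's: bound each term of the series for $GA^m$ (and the single matrix $SA^m$) by both its SED estimate from Lemma~\ref{lemma:bound-AB-main-text} and its stability estimate from \eqref{eq:open-closed-loop-stable}, then balance the two at the crossover exponent $\approx\frac{\gammasys\dist(i,j)}{\rho+\ln(aN)}$, which yields exactly the paper's constants $\frac{\tau^2\|Q\|}{1-e^{-2\rho}}+2q$ and $s+\tau\|S\|$ and rate $\frac{\gammasys\rho}{\rho+\ln(aN)}$. The only cosmetic difference is that you take a termwise minimum and split at the exact real-valued threshold, whereas the paper truncates the series at an integer index $t$ and bounds the whole tail in operator norm; the estimates and conclusions coincide.
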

\begin{proof}
We first prove $X:= GA^m$ is $(\frac{\tau^2\|Q\|}{1-e^{-2\rho}} + 2q,\frac{\gammasys \rho}{(\rho+\ln(aN))})$-SED. From the definition, we have
\begin{align*}
    X = \sum_{k=0}^{+\infty} (A^k)^\top Q A^{k+m}.
\end{align*}
Define the finite-time truncated version of $X$ as
\begin{equation*}
    X_t :=\sum_{k=0}^{t-1} (A^k)^\top Q A^{k+m}.
\end{equation*}
Firstly, we have from Assumption \ref{assump:A-exponential-stable} that
\begin{align*}
    \|X - X_t\| &= \|\sum_{k=t}^{+\infty} (A^k)^\top Q A^{k+m}\|\\
    &\le  \sum_{k=t}^{+\infty} \|A\|^k \|Q\| \|A\|^{k+m}\\
    &\le \|Q\|\sum_{k=t}^{+\infty} \tau^2e^{-(2k+m)\rho}\\
    &= \frac{\tau^2\|Q\|}{1-e^{-2\rho}} e^{-(2t+m)\rho}.
\end{align*}
Secondly, from Lemma \ref{lemma:bound-AB} we know that $(A^k)^\top Q A^{k+m}$ is $(N^{2k+m}qa^{2k+m},\gammasys )$-SED. Since $(aN)^2\ge2$ it holds that
$$\sum_{k=0}^{t-1} N^{2k+m}qa^{2k+m} = qa^mN^m\frac{(aN)^{2t}-1}{(aN)^2-1}\le 2q(aN)^{2(t-1)+m}\!\!,$$
we obtain $X_t$ is $(2q(aN)^{2(t\!-\!1)+m},\gammasys )$-SED.

Lastly, combining the previous results together we get
\begin{align*}
    \|[X]_{ij}\|&\le \|[X-X_t]_{ij}\| + \|[X_t]_{ij}\|\\
    &\le \|X-X_t\| + x_te^{-\gammasys \dist(i,j)}\\
    &\le \frac{\tau^2\|Q\|}{1-e^{-2\rho}} e^{-(2t+m)\rho} + 2q(aN)^{2(t-1)+m}e^{-\gammasys \dist(i,j)}.
\end{align*}
Intuitively, we want the two terms to be on the same scale, that is, 
$$e^{-(2t+m)\rho} = (aN)^{2(t-1)+m}e^{-\gammasys \dist(i,j)}.$$
Therefore, we set $$t = \left\lfloor\frac{\gammasys \dist(i,j)}{2(\rho+\ln(aN))}-\frac{m}{2}\right\rfloor + 1,$$
which gives
\begin{align*}
    \|[X]_{ij}\|&\le\frac{\tau^2\|Q\|}{1-e^{-2\rho}} e^{-(2t+m)\rho} + 2q(aN)^{2(t-1)+m}e^{-\gammasys \dist(i,j)}\\
    &\le \left(\frac{\tau^2\|Q\|}{1-e^{-2\rho}} + 2q\right) \exp\left({-\frac{\gammasys \rho}{(\rho+\ln(aN))}\dist(i,j)}\right)
\end{align*}
which completes the proof for $GA^m$. We now prove that $SA^m$ is $(s+\tau\|S\|,\frac{\gammasys \rho}{(\rho+\ln(aN))})$-SED. The technique is similar as the above proof. From Lemma \ref{lemma:bound-AB} we have that $SA^k$ is $(N^ksa^k,\gammasys )$-SED. Thus
\begin{align*}
    &\quad \|[SA^m]_{ij}\| \le \min\{\|[SA^m]_{ij}\|, \|SA^m\|\}\\
    &\le \min\{N^msa^me^{-\gammasys \dist(i,j)} ,\tau\|S\|e^{-\rho m}\}\\
    &\le (s+\tau\|S\|)\min\{(aN)^me^{-\gammasys \dist(i,j)}, e^{-\rho m}\}.
\end{align*}
For $m \ge \frac{\gammasys \dist(i,j)}{\rho + \ln(aN)}$, we have that
\begin{align*}
    \min\{(aN)^me^{-\gammasys \dist(i,j)}, e^{-\rho m}\} \le e^{-\rho m}\le e^{-\frac{\gammasys \rho}{\rho+\ln(aN)} \dist(i,j)};
\end{align*}
for $m\le \frac{\gammasys \dist(i,j)}{\rho + \ln(aN)}$,
\begin{align*}
    \min\{(aN)^me^{-\gammasys \dist(i,j)}, e^{-\rho m}\} \le (aN)^me^{-\gammasys \dist(i,j)}\\
    \le e^{-\frac{\gammasys \rho}{\rho+\ln(aN)} \dist(i,j)}.
\end{align*}
Thus
\begin{align*}
     &\quad \|[SA^m]_{ij}\| \le \min\{\|[SA^m]_{ij}\|, \|SA^m\|\}\\
    &\le (s+\tau\|S\|)\min\{(aN)^me^{-\gammasys \dist(i,j)}, e^{-\rho m}\}\\
    &\le (s+\tau\|S\|)e^{-\frac{\gammasys \rho}{\rho+\ln(aN)} \dist(i,j)},
\end{align*}
which completes the proof.
\end{proof}

Lemma \ref{lemma:spatially-decay-M-H} is a direct corollary of the above lemma.
\begin{proof}(of Lemma \ref{lemma:spatially-decay-M-H})
Since $$J_k = B^\top GA^k + SA^{k-1},$$
combining Lemma \ref{lemma:bound-AB} and Lemma \ref{lemma:spatially-decaying-G}, we know that $J_k$ is $\left(bN\left(\frac{\tau^2\|Q\|}{1-e^{-2\rho}} + 2q\right) + s +\tau\|S\|,\frac{\rho}{(\rho+\ln(aN))}\gammasys \right)$-SED. Similar statement also follows for $M_{km}$.
\end{proof}

\subsection{Proof of Theorem \ref{theorem:spatially-decay-L}}\label{apdx:spatially-decay-L}
The proof of Theorem \ref{theorem:spatially-decay-L} builds on the following lemma:
\begin{lemma}\label{lemma:main-lemma-spatially-decay-L}
If $\MK, \HK$ satisfies
\begin{enumerate}
    \item $M_{km}$ is $(c_M,\gamma_M)$-SED and $J_k$ is $(c_J,\gamma_M)$-SED, $\forall~ 1\le k,m\le K$,~~($c_M, c_J\ge 1$);
    \item $\lmin I\preceq \MK \preceq \lmax I$,~~ ($\lmax\ge 1$).
\end{enumerate}
Then for $\LK= -\left(\MK\right)^{-1}\HK,$ it satisfies that $\LK_k$ is $(c_L^{(H)}, \gamma_L^{(H)})$-SED, with 
\begin{equation*}
c_L^{(H)}\!\! =\! \frac{\|\HK\|}{\lmin}+2c_J, ~ \gamma_L^{(H)}\!\! =\! \frac{\lmin}{\lmax\ln(c_MNH + 1)\!+\!\lmin}\gamma_M.
\end{equation*}
\end{lemma}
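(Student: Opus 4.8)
The plan is to realize $(\MK)^{-1}$ through its Neumann series, show that every partial sum inherits the SED structure of the blocks $M_{km},J_k$ (at the price of a constant that grows geometrically with the truncation order), establish that these partial sums converge to $\LK$ at an explicit exponential rate, and then, for each pair $i,j$, choose the truncation order as a function of $\dist(i,j)$ so as to balance the two error sources against each other.

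First I would normalize: by assumption~(2) and $\lmax\ge1$, the matrix ${\MK}':=I-\MK/\lmax$ satisfies $0\preceq{\MK}'\preceq(1-\lmin/\lmax)I$, hence $\|{\MK}'\|\le 1-\lmin/\lmax$ and $\LK=-(\MK)^{-1}\HK=-\tfrac{1}{\lmax}\sum_{s=0}^{\infty}({\MK}')^s\HK$. Writing $L^{(H),t}:=-\tfrac{1}{\lmax}\sum_{s=0}^{t-1}({\MK}')^s\HK$ for the order-$t$ truncation, the partial sums obey the blockwise linear recursion
\[
  L^{(H),t+1}_k=-\sum_{m=1}^{H}\frac{M_{km}}{\lmax}\,L^{(H),t}_m+L^{(H),t}_k-\frac{J_k}{\lmax},\qquad L^{(H),1}_k=-\frac{J_k}{\lmax}.
\]
Then, using assumption~(1) together with Lemma~\ref{lemma:bound-AB-main-text} (SED is closed under sums and under products, the latter at the cost of a factor $N$) and the fact that $\lmax\ge1$ keeps $M_{km}/\lmax$ being $(c_M,\gamma_M)$-SED and $J_k/\lmax$ being $(c_J,\gamma_M)$-SED, an induction on $t$ shows that every $L^{(H),t}_k$ is $\bigl(2(c_M NH+1)^{t-1}c_J,\ \gamma_M\bigr)$-SED; concretely the induction constant obeys $\beta_{t+1}=(HNc_M+1)\beta_t+c_J$ with $\beta_1\le c_J$, which sums (using $HNc_M\ge1$) to that bound.

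Next I would control the truncation error by the geometric tail $\|\LK-L^{(H),t}\|\le\tfrac{\|\HK\|}{\lmax}\sum_{s\ge t}(1-\lmin/\lmax)^s\le\tfrac{\|\HK\|}{\lmin}e^{-(\lmin/\lmax)t}$, using $1-x\le e^{-x}$. Combining this with the previous step, for every $t$ and every $i,j$,
\[
  \|[\LK_k]_{ij}\|\le\frac{\|\HK\|}{\lmin}\,e^{-(\lmin/\lmax)t}+2(c_M NH+1)^{t-1}c_J\,e^{-\gamma_M\dist(i,j)},
\]
and choosing $t=t_0:=\bigl\lfloor\gamma_M\dist(i,j)\big/(\ln(c_M NH+1)+\lmin/\lmax)\bigr\rfloor+1$ equalizes the two exponents, which after substitution gives
\[
  \|[\LK_k]_{ij}\|\le\Bigl(\frac{\|\HK\|}{\lmin}+2c_J\Bigr)\exp\!\Bigl(-\frac{\lmin\,\gamma_M}{\lmax\ln(c_M NH+1)+\lmin}\,\dist(i,j)\Bigr),
\]
i.e.\ the asserted $(c_L^{(H)},\gamma_L^{(H)})$-SED bound.

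The main obstacle, and essentially the only step that is not mechanical, is the tension between the two middle steps: the SED constant of $L^{(H),t}_k$ blows up geometrically in $t$, so one cannot simply let $t\to\infty$ and read off a bound on $\LK$ from its Neumann series. The resolution is that the truncation error shrinks exponentially in $t$, so truncating at $t_0$ growing linearly in $\dist(i,j)$ keeps both contributions of the same order; this is precisely the mechanism by which the final rate $\gamma_L^{(H)}$ is the rate $\gamma_M$ degraded by the $\ln(c_M NH+1)$ factor, and, further downstream, the source of the $\textup{poly}\ln(N)$ loss in Theorem~\ref{theorem:spatially-decay-L}. The remaining ingredients — the Neumann expansion, the geometric-sum bookkeeping for $\beta_t$, and the floor-function manipulations that turn $t_0$ into the stated exponent — are all routine.
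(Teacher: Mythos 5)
Your proposal is correct and follows essentially the same route as the paper's own proof: normalize $\MK$, expand $(\MK)^{-1}$ as a Neumann series, show by induction via Lemma~\ref{lemma:bound-AB-main-text} that the truncated sums are $\bigl(2(c_MNH+1)^{t-1}c_J,\gamma_M\bigr)$-SED, bound the geometric tail by $\tfrac{\|\HK\|}{\lmin}e^{-(\lmin/\lmax)t}$, and balance the two terms with the same choice of $t_0$. The only cosmetic difference is that the paper tracks the slightly sharper induction constant $\ell_t=(2(c_MNH+1)^{t-1}-1)c_J$, which your bound dominates, so the final constants and rate agree.
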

\begin{proof}
Let
\begin{align*}
    {\MK}' &:= I - \frac{\MK}{\lmax},
\end{align*}
then $0\preceq {\MK}' \preceq (1-\frac{\lmin}{\lmax}) I$.
Notice that $\LK$ can be written as summations of polynomials of $\MK$ multiplied by $\HK$
\begin{align*}
    \LK &= -\left(\MK\right)^{-1}\HK \\
    &= -\left(\lmax I - \lmax {\MK}'\right)^{-1}\HK\\
    &=-\frac{1}{\lmax}\left(I-{\MK}'\right)^{-1}\HK\\
    &=-\frac{1}{\lmax}\sum_{s=0}^{+\infty}\left({\MK}'\right)^s \HK.
\end{align*}
Define the truncated summation as
\begin{align*}
    L^{(H),t} &:= -\frac{1}{\lmax}\sum_{s=0}^{t-1}{{\MK}'}^s \HK,
\end{align*}
then we have
\begin{align*}
    \|\LK-L^{(H),t}\| &= \left\|\frac{1}{\lmax}\sum_{s=t}^{+\infty}{{\MK}'}^s \HK\right\|\\
    &\le \frac{1}{\lmax}\sum_{s=t}^{+\infty}\|{\MK}'\|^s \|\HK\|\\
    &\le \frac{\|\HK\|}{\lmax}\sum_{s=t}^{+\infty}\left(1-\frac{\lmin}{\lmax}\right)^s\\
    &= \left(1-\frac{\lmin}{\lmax}\right)^t \frac{\|\HK\|}{\lmin}.
\end{align*}

On the other hand, 
we will show the claim that $L^{(H),t}_k$ is $(\ell_t, \gamma_M)$-SED for all $1\le k\le H$ by induction, where
\begin{equation*}
    \ell_t = (2(c_MNH+1)^{t-1}-1)c_J.
\end{equation*}
For $t=1$, $L^{(H),1} = \frac{1}{\lmax}\HK$, thus $L^{(H),1}_k$ is $(c_J,\gamma_M)$-SED, which satisfies the claim. Assume that the SED property holds for $t$, then at $t+1$,
\begin{align*}
    L^{(H),t+1} &= {\MK}' L^{(H),t} - \frac{\HK}{\lmax}\\
    & = \left(I-\frac{\MK}{\lmax}\right)L^{(H),t}- \frac{\HK}{\lmax}\\
    \Longrightarrow~~L^{(H),t+1}_k &= -\sum_{m=1}^H\frac{M_{km}}{\lmax}L^{(H),t}_m + L^{(H),t}_k - \frac{J_k}{\lmax}.
\end{align*}
From Lemma \ref{lemma:bound-AB}, we know that for each $m$, $\frac{M_{km}}{\lmax}L^{(H),t}_m$ is $(\frac{c_M N\ell_t}{\lmax},\gamma_M)$-SED, which can be further bounded as $(c_M N\ell_t,\gamma_M)$-SED from $\lmax\ge1$, thus $L^{(H),t+1}_k$ is $\big((c_MNH+1)\ell_t + c_J,\gamma_M\big)$-SED. From the induction assumption and $c_MNH\ge1$,
\begin{align*}
    &\quad (c_MNH+1)\ell_t + c_J \\
    &= (c_MNH+1)(2(c_MNH+1)^{t-1}-1)c_J + c_J \\
    &= (2(c_MNH+1)^{t}-c_MNH)c_J \\
    &\le (2(c_MNH+1)^{t} - 1)c_J = \ell_{t+1},
\end{align*}
which completes the proof of the claim by induction.

Now combining the above results together, we have that
\begin{align*}
    &\quad \|[\LK_k]_{ij}\| \le  \|[\LK_k- L^{(H),t}_k]_{ij}\| + \|[L^{(H),t}_k]_{ij}\|\\
    &\le \|\LK- L^{(H),t}\| + \ell_t e^{-\gamma_M\dist(i,j)}\\
    &\le \left(1-\frac{\lmin}{\lmax}\right)^t \frac{\|\HK\|}{\lmin} + 2(c_MNH+1)^{t-1}c_He^{-\gamma_M\dist(i,j)}\\
    &\le\frac{\|\HK\|}{\lmin}e^{-\frac{\lmin}{\lmax}t}+2(c_MNH+1)^{t-1}c_He^{-\gamma_M\dist(i,j)} .
\end{align*}
Intuitively, we want the two terms to be roughly on the same scale, here, we choose
\begin{equation*}
    t = \left\lfloor\frac{\gamma_M\dist(i,j)}{\ln(c_MNH + 1)+\frac{\lmin}{\lmax}}\right\rfloor + 1,
\end{equation*}
substitute this $t$ into the above inequality, we have
\begin{align*}
    &\|[\LK_k]_{ij}\|     \!\le\! \frac{\|\HK\|}{\lmin}e^{\!-\!\frac{\lmin}{\lmax}t}\!+\!2(c_MNH\!\!+\!\!1)^{t-1}c_Je^{\!-\!\gamma_M\dist(i,j)} \\
    &\le \frac{\|\HK\|}{\lmin}\exp\left({-\frac{\gamma_M\frac{\lmin}{\lmax}\dist(i,j)}{\ln(c_MNH + 1)+\frac{\lmin}{\lmax}}}\right) \\
    & +2c_J\exp\!\left(\!\ln(c_MNH \!\!+\!\! 1)\frac{\gamma_M\dist(i,j)}{\ln(c_MNH \!\!+\!\! 1)\!+\!\frac{\lmin}{\lmax}}\!\right)e^{\!-\!\gamma_M\dist(i,\!j)}\\
    & = \left(\frac{\|\HK\|}{\lmin}+2c_J\right)\exp\left({-\frac{\gamma_M\frac{\lmin}{\lmax}\dist(i,j)}{\ln(c_MNH + 1)+\frac{\lmin}{\lmax}}}\right)\\
    & = \!\left(\!\frac{\|\HK\|}{\lmin}\!+\!2c_J\!\right)\exp\left({\!-\frac{\gamma_M\lmin\dist(i,j)}{\lmax\ln(c_MNH + 1)+\lmin}}\!\right),
\end{align*}
which completes the proof.
\end{proof}
In order to apply Lemma \ref{lemma:main-lemma-spatially-decay-L} to show that $\LK$ is SED, we need to further give an upperbound and lowerbound of the eigenvalues of $\MK$, which is stated in the following lemma.
\begin{lemma}\label{lemma:bound-wM}
The eigenvalues of $\MK$ have bounds
\begin{align*}
    \lmin(\MK)&\ge \lmin(R - S Q^{-1}S^\top),\\
    \lmax(\MK)&\le \lmax(R) + \frac{4\tau^2(\|B\|^2\|Q\|+\|B\|\|S\|)}{(1-e^{-2\rho})^2}.
\end{align*}
\end{lemma}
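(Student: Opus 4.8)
The plan is to exploit the explicit block structure of $\MK$ exhibited in Lemma \ref{lemma:optimal-L-K}, together with the fact that $\MK$ is the Hessian of the convex quadratic $C(\LK)$. For the \emph{lower bound}, I would argue as follows. From the cost decomposition in Appendix \ref{apdx:optimal-LK}, $\MK = \BK{}^\top \mathbf{\Lambda}^{(H)}_{\mathrm{sub}} \BK + \RK + (\text{cross }S\text{ terms})$, where the quadratic form $\LK{}^\top \MK \LK$ equals the contribution of the control sequence to $C(\LK)$ when the disturbance covariance is the identity. Concretely, picking a single unit impulse $w_{-1}$ and zero elsewhere, $\LK{}^\top \MK \LK$ lower-bounds $\sum_t x_t^\top Q x_t + u_t^\top R u_t + 2 u_t^\top S x_t$ restricted to that impulse. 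The standard completion-of-squares identity $x^\top Q x + u^\top R u + 2u^\top S x \ge u^\top (R - S Q^{-1} S^\top) u$ (valid since $Q \succ 0$) then gives, after summing over the impulse directions, $\MK \succeq \mathrm{blkdiag}(R - SQ^{-1}S^\top, \dots) \succeq \lmin(R - SQ^{-1}S^\top) I$. One must be a little careful that the $u_t$ appearing here are exactly the $\LK_k$ blocks acting on the single impulse, which is true by construction of the disturbance-response parameterization.

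For the \emph{upper bound}, I would bound $\|\MK\|$ by bounding each block and summing, or more cleanly by bounding the three pieces $\|\BK{}^\top \mathbf{\Lambda}^{(H)} \BK\|$, $\|\RK\|$, and the $S$-cross term separately. We have $\|\RK\| = \|R\| = \lmax(R)$ trivially. For the $\mathbf{\Lambda}$ piece, recall $\Lambda_{km} = G A^{k-m}$ (or its transpose) with $G = \sum_{t\ge 0} (A^t)^\top Q A^t$, so $\|G\| \le \sum_t \tau^2 e^{-2\rho t}\|Q\| = \tau^2\|Q\|/(1-e^{-2\rho})$ and $\|\Lambda_{km}\| \le \|G\|\,\tau e^{-\rho|k-m|}$. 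The block-matrix $\mathbf{\Lambda}^{(H)}$ then has operator norm controlled via a Schur/Young argument: $\|\mathbf{\Lambda}^{(H)}\| \le \max_k \sum_m \|\Lambda_{km}\| \le \|G\|\,\tau \sum_{j\in\mathbb{Z}} e^{-\rho|j|} = \|G\|\,\tau \cdot \frac{1+e^{-\rho}}{1-e^{-\rho}} \le \frac{2\tau\|G\|}{1-e^{-\rho}}$. Sandwiching by $\|\BK\|^2 = \|B\|^2$ gives $\|\BK{}^\top\mathbf{\Lambda}^{(H)}\BK\| \le 2\tau^3\|B\|^2\|Q\| / ((1-e^{-2\rho})(1-e^{-\rho}))$, and since $1-e^{-\rho}\ge 1-e^{-2\rho}$ this is at most $2\tau^3\|B\|^2\|Q\|/(1-e^{-2\rho})^2$; absorbing constants and the analogous $S$-cross-term bound (which involves $\Xi^{(H+1)}$, whose blocks are powers $A^j$ summing the same way) into the claimed $\frac{4\tau^2(\|B\|^2\|Q\| + \|B\|\|S\|)}{(1-e^{-2\rho})^2}$. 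Triangle inequality over the three pieces then yields the stated bound; here one uses $\tau \ge 1$ to simplify $\tau^3$ versus $\tau^2$ if needed, or one tracks the $\tau$ powers carefully — a routine check.

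The main obstacle I anticipate is the lower bound: one must establish $\MK \succeq \lmin(R - SQ^{-1}S^\top) I$ \emph{for every truncation horizon} $H$ and \emph{for arbitrary} $\LK$, not merely along impulse directions, so the "single impulse" picture must be upgraded to the statement that $\LK{}^\top \MK \LK$ is exactly the per-step control cost of the signal $u_t = \sum_k \LK_k w_{t-k}$ under $w \sim \mathcal{N}(0,I)$ with the $x$-cost \emph{included}. Once that identification is made, the completion-of-squares bound $x^\top Q x + u^\top R u + 2 u^\top S x \ge u^\top(R - SQ^{-1}S^\top)u$ is pointwise in $(x,u)$ and hence survives taking expectations and the Cesàro limit, giving $C(\LK) \ge \lmin(R-SQ^{-1}S^\top)\cdot\mathbb{E}\|u_t\|^2 \ge \lmin(R-SQ^{-1}S^\top)\cdot\|\LK\|_F^2$ — but to read this as a bound on $\MK$ rather than on the full $2\times 2$ block form one must discard the $[I,\LK{}^\top]$-framing carefully. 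I would handle this by noting that the minimizer over $\LK$ of the $x$-only cost does not vanish, so the clean inequality is really $\MK \succeq \RK - \SK \mathbf{\Xi}^{(H+1)}(\cdots)^{-1}(\cdots) \succeq \lmin(R-SQ^{-1}S^\top)I$ via a Schur-complement identity on the block matrix $\begin{bmatrix}\Lambda_{11} & \HK{}^\top\\ \HK & \MK\end{bmatrix}$ appearing at the end of Appendix \ref{apdx:optimal-LK}. Everything else is bookkeeping with geometric series.
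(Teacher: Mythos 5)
Your lower bound is where the genuine gap lies. You correctly sense that $\tr\bigl([I,\LK{}^\top]\,\Theta\,[I;\LK]\bigr)$ with $\Theta=\begin{bmatrix}\Lambda_{11} & \HK{}^\top\\ \HK & \MK\end{bmatrix}$ cannot be read directly as a bound on $\MK$ because of the cross term $\HK$, but the fix you propose does not work: a Schur-complement argument on $\Theta$ (assuming $\Theta\succeq 0$) only yields $\MK\succeq \HK\Lambda_{11}^{-1}\HK{}^\top\succeq 0$, and the asserted intermediate inequality ``$\MK\succeq \RK-\SK\XiK(\cdots)^{-1}(\cdots)\succeq\lmin(R-SQ^{-1}S^\top)I$'' is left with unspecified terms and no justification. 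The paper instead completes the square \emph{inside} $\MK$: starting from $\MK=\BK{}^\top\mathbf{\Lambda}^{(H)}\BK+\SK\XiK\BK+(\SK\XiK\BK)^\top+\RK$, it adds and subtracts $\SK\QK^{-1}\SK{}^\top$ so that the cross terms are absorbed into the positive-semidefinite square $\bigl(\QK\XiK\BK+\SK{}^\top\bigr)^\top\QK^{-1}\bigl(\QK\XiK\BK+\SK{}^\top\bigr)$, at the price of the term $-\BK{}^\top\XiK{}^\top\QK\XiK\BK$; the crux is then the operator inequality $\XiK{}^\top\QK\XiK=\sum_{t=1}^{H-1}\mathbf{A}_t^{(H)\top}Q\,\mathbf{A}_t^{(H)}\preceq\sum_{t=1}^{\infty}\mathbf{A}_t^{(H)\top}Q\,\mathbf{A}_t^{(H)}=\mathbf{\Lambda}^{(H)}$ (using Lemma \ref{lemma:matrix-M}), which kills the $\mathbf{\Lambda}$-dependence and leaves $\MK\succeq\RK-\SK\QK^{-1}\SK{}^\top\succeq\lmin(R-SQ^{-1}S^\top)I$. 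That key inequality, and the identity behind it, are entirely absent from your proposal; your alternative ``impulse/cost'' picture could also be made rigorous (by identifying $\tr(\LK{}^\top\MK\LK)$ with the steady-state cost of the purely control-driven state component and then completing squares pointwise), but as written you neither carry that out nor arrive at a valid substitute.

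The upper bound follows the paper's route in spirit (block row-sum bound, as in Lemma \ref{lem:lmax}, plus geometric series), but two steps are off. First, you bound $\|\Lambda_{km}\|\le\|G\|\,\tau e^{-\rho|k-m|}$, which carries an extra factor $\tau$ relative to the paper's direct estimate $\|GA^{m}\|\le\frac{\tau^2\|Q\|e^{-\rho m}}{1-e^{-2\rho}}$ (Lemma \ref{lemma:bound-G-norm}); the resulting $\tau^3$ cannot be ``simplified to $\tau^2$ using $\tau\ge1$'' --- that inequality goes the wrong way --- so as written you prove a weaker constant than the one in the lemma. Second, the step ``since $1-e^{-\rho}\ge 1-e^{-2\rho}$'' is false for $\rho>0$; the correct manipulation is $\frac{1}{1-e^{-\rho}}\le\frac{2}{1-e^{-2\rho}}$. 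Both issues are repaired by bounding $\|GA^{k-m}\|$ as a whole via the series defining $G$, exactly as the paper does, after which the bookkeeping gives the stated $\lmax(R)+\frac{4\tau^2(\|B\|^2\|Q\|+\|B\|\|S\|)}{(1-e^{-2\rho})^2}$.
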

\begin{proof}
From the definition of $\MK, \mathbf{\Lambda}^{(H)}, \XiK,\BK, \RK,\SK$ (c.f. \eqref{eq:def-MK-HK},\eqref{eq:def-Lambda-K},\eqref{eq:def-BK-RK}) we can verify that
\begin{align*}
    \MK = \BK{}^\top \mathbf{\Lambda}^{(H)} \BK + \SK\XiK\BK \\
    + \left(\SK\XiK\BK\right)^\top + \RK.
\end{align*}
Define $\QK \in \mathbb{R}^{Hn_x\times Hn_x}$ as
\begin{equation*}
    \QK = \left[\begin{array}{ccc}
         Q& & \\
         & \ddots&\\
         & & Q
    \end{array}\right],
\end{equation*}
then we have that
\begin{align*}
    &\MK = \BK{}^\top \mathbf{\Lambda}^{(H)} \BK + \SK\XiK\BK\\
    &\qquad\qquad\qquad\qquad+ \left(\SK\XiK\BK\right)^\top + \RK\\
    &= \BK{}^\top \mathbf{\Lambda}^{(H)} \BK \!+ \SK\XiK\BK \!\!+\! \left(\!\SK\XiK\BK\!\right)^\top \\
    &+ \SK \!\left(\QK\right)^{-1}\!\!\SK{}^\top\!\!+\! \left(\RK\!\! -\!\! \SK \left(\!\QK\!\right)^{-1}\SK{}^\top\right)\\
    &= \BK{}^\top \mathbf{\Lambda}^{(H)} \BK - \BK{}^\top \XiK{}^\top \QK\XiK \BK\\
    &+ \!\!\left(\!\QK\XiK\BK \!\!+\! \SK{}^\top\!\right)^{\!\!\!\top}\!\!{\!\QK\!}^{-\!1}\!\!\left(\!\QK\XiK\BK \!\!+\! \SK{}^\top\!\right)\\
    &+ \left(\RK - \SK \left(\QK\right)^{-1}\SK{}^\top\right)\\
    &\succeq \BK{}^\top\left( \mathbf{\Lambda}^{(H)}- \XiK{}^\top \QK\XiK\right) \BK \\
    &\quad + \left(\RK - \SK \left(\QK\right)^{-1}\SK{}^\top\right).
\end{align*}
Additionally from the definition of $\XiK$ (Eq \ref{eq:def-Lambda-K}) we have that
\begin{align*}
    \XiK{}^\top \QK\XiK = \sum_{t=1}^{H-1} \!\mathbf{A}_t^{(H)}{}^\top \!Q\mathbf{A}_t^{(H)} \\
    \preceq \sum_{t=1}^{\infty} \mathbf{A}_t^{(H)}{}^\top Q\mathbf{A}_t^{(H)} = \LambdaK,
\end{align*}
thus
\begin{align*}
    \MK\!\succeq \!\left(\!\RK \!\!-\! \SK \left(\!\QK\!\right)^{\!-\!1}\SK{}^\top\!\!\right) \!\succeq\!\lmin(R\!-\! SQ^{-1}S^\top)
\end{align*}
which proves the first inequality. For the second inequality, using Lemma \ref{lem:lmax} we have
\begin{align*}
    &\quad\lmax(\MK)\le \max_k \sum_{m=1}^H\left\|M_{km}\right\|\\
    &\le \!\|R\|\! +\! \|B^\top GB\| \!+\! 2\!\sum_{m=1}^{+\infty} \!\|B\|^2\|GA^m\| \!+\! 2\!\sum_{m=0}^\infty \!\|B\|\|S\|\|A^m\|\\
    &\overset{\text{(Lemma \ref{lemma:bound-G-norm})}}{\le} \|R\| \!+\! \frac{\tau^2\|B\|^2\|Q\|}{1-e^{-2\rho}} \big( 1\!+\! 2\sum_{m=1}^{+\infty}e^{-\rho m} \big) \!+\! \frac{2\tau\|B\|\|S\|}{1-e^{-\rho}}\\
    &\le \lmax(R) + \frac{4\tau^2(\|B\|^2\|Q\|+\|B\|\|S\|)}{(1-e^{-2\rho})^2}.
\end{align*}
\end{proof}

Theorem \ref{theorem:spatially-decay-L} is a direct corollary of Lemma \ref{lemma:spatially-decay-M-H}, \ref{lemma:main-lemma-spatially-decay-L} and \ref{lemma:bound-wM}. We first restate Theorem \ref{theorem:spatially-decay-L} in a more formal way.
f\begin{lemma}\label{lemma:spatially-decay-L-formal} (Theorem \ref{theorem:spatially-decay-L} restatement)
The optimal $\LK$ that solves \eqref{eq:L-tilde-eq} satisfies that $\LK_k$ is $(c_L^{(H)}, \gamma_L^{(H)})$-SED for any $1\le k\le H$, where 
\begin{align*}
    &\textstyle c_L^{(H)} = \frac{2\tau^2(\|B\|\|Q\|+\|S\|)}{(1-e^{-2\rho})^2\lmin(R\!-\!SQ^{-1}S^\top)} + 2c_J, \\
    &\textstyle \gamma_L^{(H)}  = \frac{\gammasys \rho\lmin(R\!-\!SQ^{-1}S^\top)}{\rho+\ln(aN)}\times\\
    &{\textstyle\frac{1}{\big(\lmax(R)+ \frac{4\tau^2(\|B\|^2\|Q\|+\|B\|\|S\|)}{(1-e^{-2\rho})^2}\big)\ln(c_MNH \!+\! 1) + \lmin(R-SQ^{-1}S^\top)}},
\end{align*}
where $c_J, c_M$ is defined as in Lemma \ref{lemma:spatially-decay-M-H}.
\end{lemma}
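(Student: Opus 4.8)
The plan is to obtain Lemma~\ref{lemma:spatially-decay-L-formal} as a direct corollary of three results already in hand: the per-block SED estimates of Lemma~\ref{lemma:spatially-decay-M-H} (which control the $M_{km}$ and $J_k$), the spectral bounds of Lemma~\ref{lemma:bound-wM} (which sandwich $\MK$ between $\lmin(R-SQ^{-1}S^\top)\,I$ and $\big(\lmax(R)+\frac{4\tau^2(\|B\|^2\|Q\|+\|B\|\|S\|)}{(1-e^{-2\rho})^2}\big)\,I$), and the abstract transfer result Lemma~\ref{lemma:main-lemma-spatially-decay-L} (which turns ``all blocks are $(c_M,\gamma_M)$- and $(c_J,\gamma_M)$-SED and $\lmin I\preceq\MK\preceq\lmax I$'' into ``$\LK_k$ is $\big(\frac{\|\HK\|}{\lmin}+2c_J,\ \frac{\lmin}{\lmax\ln(c_MNH+1)+\lmin}\gamma_M\big)$-SED for every $1\le k\le H$''). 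Essentially all the work has been front-loaded into these lemmas; the remaining task is to feed their constants into Lemma~\ref{lemma:main-lemma-spatially-decay-L} and simplify.

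Concretely, I would first check the two hypotheses of Lemma~\ref{lemma:main-lemma-spatially-decay-L}. Hypothesis~(1) is literally the statement of Lemma~\ref{lemma:spatially-decay-M-H}, so I take $\gamma_M=\frac{\gammasys\rho}{\rho+\ln(aN)}$ with $c_M,c_J\ge1$. Hypothesis~(2) is Lemma~\ref{lemma:bound-wM}: I set $\lmin:=\lmin(R-SQ^{-1}S^\top)$ (positive by Assumption~\ref{assump:LQR-controllability}) and $\lmax:=\max\big\{1,\ \lmax(R)+\frac{4\tau^2(\|B\|^2\|Q\|+\|B\|\|S\|)}{(1-e^{-2\rho})^2}\big\}$, noting that enlarging the upper bound to be at least $1$ is harmless since $\MK\preceq\lmax I$ still holds and the rate $\gamma_L^{(H)}$ only gets (weakly) smaller. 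The one computation not already recorded is a bound on $\|\HK\|$, needed for the leading constant $c_L^{(H)}=\frac{\|\HK\|}{\lmin}+2c_J$: since $\HK=[J_1;\cdots;J_H]$ with $J_k=B^\top GA^k+SA^{k-1}$, the telescoping estimate $\|GA^k\|\le\frac{\tau^2\|Q\|}{1-e^{-2\rho}}e^{-\rho k}$ (the same one used for $G$ in Lemma~\ref{lemma:bound-G-norm}) together with $\|A^{k-1}\|\le\tau e^{-\rho(k-1)}$ and a geometric sum over $k$ gives $\|\HK\|\le\sum_{k\ge1}\|J_k\|\le\frac{2\tau^2(\|B\|\|Q\|+\|S\|)}{(1-e^{-2\rho})^2}$, after absorbing $\frac{1}{1-e^{-\rho}}\le\frac{2}{1-e^{-2\rho}}$ and using $\tau\ge1$, $1-e^{-2\rho}\le1$.

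With these in place I would invoke Lemma~\ref{lemma:main-lemma-spatially-decay-L}: each $\LK_k$ is $(c_L^{(H)},\gamma_L^{(H)})$-SED, and substituting $\|\HK\|\le\frac{2\tau^2(\|B\|\|Q\|+\|S\|)}{(1-e^{-2\rho})^2}$, $\lmin=\lmin(R-SQ^{-1}S^\top)$, $\lmax=\lmax(R)+\frac{4\tau^2(\|B\|^2\|Q\|+\|B\|\|S\|)}{(1-e^{-2\rho})^2}$, and $\gamma_M=\frac{\gammasys\rho}{\rho+\ln(aN)}$ into $c_L^{(H)}=\frac{\|\HK\|}{\lmin}+2c_J$ and $\gamma_L^{(H)}=\frac{\lmin}{\lmax\ln(c_MNH+1)+\lmin}\gamma_M$ reproduces the two displayed expressions verbatim, with $c_M,c_J$ as in Lemma~\ref{lemma:spatially-decay-M-H}.

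Because the heavy lifting is already done in the cited lemmas, there is no real obstacle left in this final assembly; the genuine difficulties sit inside those lemmas. The subtle point in Lemma~\ref{lemma:main-lemma-spatially-decay-L} is that the SED constant of the truncated Neumann series $L^{(H),t}$ blows up like $(c_MNH+1)^{t}$ whereas $\|\LK-L^{(H),t}\|$ decays only like $(1-\lmin/\lmax)^{t}$, so one cannot let $t\to\infty$ and must instead stop at the distance-tuned threshold $t_0\asymp\frac{\gamma_M\dist(i,j)}{\ln(c_MNH+1)+\lmin/\lmax}$ and balance the two competing exponentials; and the subtle point in Lemma~\ref{lemma:bound-wM} is the lower bound $\lmin(\MK)\ge\lmin(R-SQ^{-1}S^\top)$, which needs the completion-of-squares identity $\MK=\BK{}^\top\mathbf{\Lambda}^{(H)}\BK+\SK\XiK\BK+(\SK\XiK\BK)^\top+\RK$ together with $\mathbf{\Lambda}^{(H)}\succeq\XiK{}^\top\QK\XiK$. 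Both of these are available here as prior results, so nothing further is required.
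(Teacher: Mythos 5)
Your proposal is correct and follows essentially the same route as the paper: verify the two hypotheses of Lemma~\ref{lemma:main-lemma-spatially-decay-L} via Lemma~\ref{lemma:spatially-decay-M-H} and Lemma~\ref{lemma:bound-wM}, bound $\|\HK\|\le\sum_k\|J_k\|$ by a geometric sum using the $\|GA^k\|$ estimate of Lemma~\ref{lemma:bound-G-norm}, and substitute the resulting constants. The only cosmetic difference is your harmless $\max\{1,\cdot\}$ safeguard on $\lmax$, which the paper omits.
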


\begin{proof}
From Lemma \ref{lemma:spatially-decay-M-H} and Lemma \ref{lemma:bound-wM}, we know that $\MK$ and $\HK$ satisfies the conditions in Lemma \ref{lemma:main-lemma-spatially-decay-L} with
\begin{align*}
&c_M = b^2N^2\left(\frac{\tau^2\|Q\|}{1-e^{-2\rho}}+2q\right) + bN(s+\tau\|S\|)+ r,\\
&c_J = bN\left(\frac{\tau^2\|Q\|}{1-e^{-2\rho}} + 2q\right) + s + \tau\|S\|,\\
&\gamma_M = \frac{\rho\gammasys }{(\rho+\ln(aN))},\\
&\lmax = \lmax(R) \!+\! \frac{4\tau^2(\|B\|^2\|Q\|\!+\!\|B\|\|S\|)}{(1-e^{-2\rho})^2} ,\\
&\lmin =\lmin(R\!-\!SQ^{-1}S^\top) .
\end{align*}
Further
\begin{align*}
    \|\HK\| &\le \sum_{k=1}^H \|\HK_k\| \le \sum_{k=1}^H \|B\|\|GA^H\| + \|S\|\|A^{H-1}\|\\
    &\le \frac{\tau^2\|B\|\|Q\|}{1-e^{-2\rho}}\sum_{k=1}^H e^{-\rho k} + \|S\|\sum_{k=0}^{H-1}e^{-\rho k} ~~\textup{(Lemma \ref{lemma:bound-G-norm})}\\
    &\le \frac{2\tau^2(\|B\|\|Q\|+\|S\|)}{(1-e^{-2\rho})^2}.
\end{align*}
Substituting these quantities to Lemma \ref{lemma:main-lemma-spatially-decay-L} completes the proof, i.e.,
\begin{align*}
    c_L^{(H)} \!\!=  \!\frac{\|\HK\|}{\lmin}\!+\! 2c_J \!\le\!  \frac{2\tau^2(\|B\|\|Q\|+\|S\|)}{(1\!-\!e^{-2\rho})^2\lmin(R\!-\!SQ^{-1}S^\top)} \!+\! 2c_J.
\end{align*}
Further
\begin{align*}
    &\quad \textstyle\gamma_L^{(H)} = \frac{\gamma_M\lmin}{\lmax\ln(c_MNH + 1)+\lmin}\\
    &\textstyle = \frac{\gammasys \rho\lmin(R\!-\!SQ^{-1}S^\top)}{\rho+\ln(aN)}\times\\
    &{\textstyle\frac{1}{\big(\lmax(R)+ \frac{4\tau^2(\|B\|^2\|Q\|+\|B\|\|S\|)}{(1-e^{-2\rho})^2}\big)\ln(c_MNH \!+\! 1) + \lmin(R-SQ^{-1}S^\top)}},
\end{align*}
which proves Lemma \ref{lemma:spatially-decay-L-formal}.
\end{proof}

\subsection{Proof of Theorem \ref{theorem:spatial-decay-F} and Corollary \ref{coro:spatial-decay-F-unstable-A}}\label{apdx:proof-SED-F}
We are now ready to prove our main result on the quasi-SED of the optimal state feedback gain $K$. We first give a formal re-statement of Theorem \ref{theorem:spatial-decay-F}.
\begin{theorem}\label{theorem:spatial-decay-F-formal}(Theorem \ref{theorem:spatial-decay-F} formal statement)
The optimal control gain $K$ for problem \eqref{eq:LQR} is $(c_K,\gamma_K)$-SED, with
\begin{align*}
    &\textstyle c_K = \frac{2\tau^3(\|B\|^2\|K\|\|Q\|+\|B\|\|K\|\|S\|)}{(1-e^{-2\rho})^{5/2}\lmin(R\!-\!SQ^{-1}S^\top)} \\
    &\textstyle\quad+ \frac{2\tau^2(\|B\|\|Q\|+\|S\|)}{(1-e^{-2\rho})^2\lmin(R\!-\!SQ^{-1}S^\top)} + 2c_J\\
    &\textstyle \gamma_K= \frac{\gammasys \rho\lmin(R\!-\!SQ^{-1}S^\top)}{(\rho+\ln(aN))}\times \\
    &\textstyle\frac{1}{\big(\!\lmax\!(R) \!+\! \frac{4\tau^2(\|B\|^{\!2}\|Q\|\!+\!\|B\|\|S\|)}{(1-e^{-2\rho})^2}\!\big)\!\ln(\gammasys c_MN^2 \!+\! c_MN \!+\! 1) \!+\! \lmin\!(R\!-\!SQ^{\!-1}S^\top\!)},
\end{align*}
where $c_J, c_M$ is defined as in Lemma \ref{lemma:spatially-decay-M-H}.
\end{theorem}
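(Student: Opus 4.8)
The plan is to bootstrap the quasi-SED property of $K$ from two facts already in hand: Lemma~\ref{lemma:relationship-K-L}, which shows that $-L_1^{(H)}$ approximates $K$ with error $\lesssim e^{-H\rho}$, and Theorem~\ref{theorem:spatially-decay-L} (formally, Lemma~\ref{lemma:spatially-decay-L-formal}), which shows that $L_1^{(H)}$ is genuinely $(c_L^{(H)},\gamma_L^{(H)})$-SED with constants that do not depend on $H$ in their functional form. The one subtlety is that $K$ is only \emph{quasi}-SED, so the truncation horizon $H$ cannot be fixed once and for all; instead it must be let grow with the distance $\dist(i,j)$ of the block under consideration.

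\textbf{Step 1 (block-wise split).} Fix $i,j\in[N]$ and any horizon $H\ge 1$. By the triangle inequality on the $(i,j)$ block,
\[
\|[K]_{ij}\|\;\le\;\|[K+L_1^{(H)}]_{ij}\|+\|[L_1^{(H)}]_{ij}\|\;\le\;\|K+L_1^{(H)}\|+c_L^{(H)}e^{-\gamma_L^{(H)}\dist(i,j)}.
\]
The first term is controlled by Lemma~\ref{lemma:relationship-K-L}: it is at most $C_1 e^{-H\rho}$ with $C_1=\tfrac{2\tau^3(\|B\|^2\|K\|\|Q\|+\|B\|\|K\|\|S\|)}{\lmin(R-SQ^{-1}S^\top)(1-e^{-2\rho})^{5/2}}$. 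The second term uses the explicit constants of Lemma~\ref{lemma:spatially-decay-L-formal}; the key observation is that $\gamma_L^{(H)}$ depends on $H$ \emph{only} through the term $\ln(c_M N H+1)$ in its denominator, so it degrades at worst logarithmically in $H$.

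\textbf{Step 2 (adaptive horizon and balancing).} I pick $H$ roughly proportional to $\dist(i,j)$ --- concretely, the largest integer for which $H\rho$ does not exceed $\gamma_L^{(H)}\dist(i,j)$ up to constant factors, which resolves the mild implicit dependence of $H$ on itself through $\ln(c_M N H+1)$. With this choice the two terms in Step~1 are of the same order, so $\|[K]_{ij}\|\le (C_1+c_L^{(H)})e^{-\gamma_L^{(H)}\dist(i,j)}$. Because on a connected graph $\dist(i,j)\le N$ (diameter bound), this $H$ obeys $c_M N H+1\le \gammasys c_M N^2+c_M N+1$; substituting this upper bound for $\ln(c_M N H+1)$ in $\gamma_L^{(H)}$ (which only decreases it) produces precisely the $N$-independent exponent $\gamma_K$ of the statement, uniformly in $(i,j)$, while $C_1+c_L^{(H)}$ is exactly the stated $c_K$.

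\textbf{Step 3 (unstable case, Corollary~\ref{coro:spatial-decay-F-unstable-A}).} When Assumption~\ref{assump:A-exponential-stable} fails, Assumption~\ref{assump:stabilizability} still supplies a $(k_0,\gammasys)$-SED gain $K_0$ with $A-BK_0$ being $(\tau,e^{-\rho})$-stable. The substitution $u_t=-K_0x_t+v_t$ turns \eqref{eq:LQR} into an LQR problem with dynamics $\tilde A=A-BK_0$ (now exponentially stable), input $B$, and cost matrices $\tilde R=R$, $\tilde S=S-RK_0$, $\tilde Q=Q+K_0^\top RK_0-K_0^\top S-S^\top K_0$, all of which remain SED by Lemma~\ref{lemma:bound-AB-main-text}. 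A congruence argument shows the transformed cost still satisfies Assumption~\ref{assump:LQR-controllability}; quantifying $\lmin(\tilde Q)$ and $\|\tilde S\|$ in terms of the original parameters is where $1/\lmin(Q)$, $\|K_0\|^2$, $k_0$, etc.\ enter the constants. Applying the stable-case bound of Theorem~\ref{theorem:spatial-decay-F-formal} to the transformed problem gives a quasi-SED optimal gain $\tilde K$; since $K=\tilde K+K_0$ and $K_0$ is $(k_0,\gammasys)$-SED --- hence also $(k_0,\gamma_K)$-SED because $\gamma_K\le\gammasys$ --- Lemma~\ref{lemma:bound-AB-main-text} yields that $K$ is $(c_K+k_0,\gamma_K)$-SED.

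\textbf{Main obstacle.} The delicate step is the balancing in Step~2: the horizon that equates the two error terms appears inside the logarithm governing $\gamma_L^{(H)}$, so one must both resolve this implicit relation and convert the resulting $(i,j)$-dependent rate into a single uniform exponent --- which forces the use of the diameter bound $\dist(i,j)\le N$ and is exactly where the unavoidable $\textup{poly}\ln(N)$ loss (the ``quasi'' in quasi-SED) is incurred. A secondary but nontrivial bookkeeping burden is checking in Step~3 that the transformed problem keeps all positivity and stability constants $N$-independent.
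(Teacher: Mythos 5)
Your proposal is correct and follows essentially the same route as the paper's proof: split $\|[K]_{ij}\|\le\|K+L^{(H)}_1\|+\|[L^{(H)}_1]_{ij}\|$ via Lemma~\ref{lemma:relationship-K-L} and Lemma~\ref{lemma:spatially-decay-L-formal}, choose the truncation horizon so that the $e^{-H\rho}$ error is dominated by the SED term, and uniformize with $\dist(i,j)\le N$ to get $\ln(\gammasys c_M N^2+c_M N+1)$ in $\gamma_K$ and $c_K=C_1+c_L^{(H)}$. The only cosmetic differences are that the paper simply fixes $H=\lfloor\gammasys N\rfloor+1$ for all blocks instead of adapting $H$ to $\dist(i,j)$, and that your balancing should take $H$ just large enough that $H\rho\ge\gamma_L^{(H)}\dist(i,j)$ (your ``largest $H$ with $H\rho\le\gamma_L^{(H)}\dist(i,j)$'' leaves the truncation term dominant and costs an extra factor $e^{\rho}$ unless you step to $H+1$), a trivial fix that does not affect the argument.
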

\begin{proof}[Proof of Theorem \ref{theorem:spatial-decay-F-formal}]
\begin{align*}
    &\quad \|[K]_{ij}\| \le \|[K]_{ij}+[\LK_1]_{ij}\| + \|[\LK_1]_{ij}\|\\
    &\le \|K+\LK_1\| + \|[\LK_1]_{ij}\|\\
    &\le \frac{2\tau^3(\|B\|^2\|K\|\|Q\|\!+\!\|B\|\|K\|\|S\|)}{\lmin(R\!-\!SQ^{-1}S^\top)(1\!-\!e^{-2\rho})^{5/2}}e^{\!-\!H\rho} \!+\! c_L^{(H)}e^{\!-\!\gamma_L^{(H)}\dist(i,\!j)}
\end{align*}
Take
\begin{align*}
    H = \left\lfloor{\gammasys  N}\right\rfloor + 1.
\end{align*}
From the definition of $\gamma_L^{(H)}$, we have that
\begin{align*}
    H \ge {\frac{\gamma_L^{(H)} N}{\rho}}, ~~\forall~H\ge 1,
\end{align*}
thus
\begin{align*}
    &\quad \|[K]_{ij}\| \\
    &\le \frac{2\tau^3(\|B\|^2\|K\|\|Q\|\!+\!\|B\|\|K\|\|S\|)}{\lmin(R\!-\!SQ^{-1}S^\top)(1\!-\!e^{-2\rho})^{5/2}}e^{\!-\!\gamma_L^{\!(\!H\!)}\!N} \!\!+\! c_L^{(\!H\!)}e^{\!-\!\gamma_L^{(\!H\!)}\dist(i,\!j)}\\
    &\le \left(\!\frac{2\tau^3(\|B\|^2\|K\|\|Q\|+\|B\|\|K\|\|S\|)}{\lmin(R\!-\!SQ^{-1}S^\top)(1-e^{-2\rho})^{5/2}}\!+\! c_L^{(H)}\!\!\right)e^{\!-\gamma_L^{(H)}\dist(i,\!j)},
\end{align*}
i.e.,
\begin{align*}
    &\textstyle c_K = \frac{2\tau^3(\|B\|^2\|K\|\|Q\|+\|B\|\|K\|\|S\|)}{\lmin(R\!-\!SQ^{-1}S^\top)(1-e^{-2\rho})^{5/2}}+ c_L^{(H)} \\
    &\textstyle= \frac{2\tau^3(\|B\|^2\|K\|\|Q\|+\|B\|\|K\|\|S\|)}{(1-e^{-2\rho})^{5/2}\lmin(R\!-\!SQ^{-1}S^\top)} \\
    &\textstyle\qquad + \frac{2\tau^2(\|B\|\|Q\|+\|S\|)}{(1-e^{-2\rho})^2\lmin(R\!-\!SQ^{-1}S^\top)} + 2c_J\\
    &\textstyle \gamma_K = \gamma_L^{(H)}\\
    &\textstyle\ge \frac{\gammasys \rho\lmin(R\!-\!SQ^{-1}S^\top)}{(\rho+\ln(aN))}\times \\
    &\textstyle\frac{1}{\big(\!\lmax\!(R) \!+\! \frac{4\tau^2(\|B\|^{\!2}\|Q\|\!+\!\|B\|\|S\|)}{(1-e^{-2\rho})^2}\!\big)\!\ln(\gammasys c_MN^2 \!+\! c_MN \!+\! 1) \!+\! \lmin\!(R\!-\!SQ^{\!-1}S^\top\!)},
\end{align*}
thus completes the proof.
\end{proof}

\begin{proof}[Proof of Corollary \ref{coro:spatial-decay-F-unstable-A}]
We define $\bar u_t = K_0x_t + u_t$, then the LQR problem \ref{eq:LQR} can be re-expressed to the following problem
\begin{equation}\label{eq:LQR-unstable-re-express}
    \begin{split}
\min_{\{u_t\}_{t=0}^{\infty}}&\lim_{T\to\infty} \frac{1}{T}\mathbb{E}\sum_{t=0}^{T-1}x_t^\top \bar Q x_t + \bar u_t^\top \bar R\bar u_t + 2 \bar u_t^\top \bar S x_t\\
 s.t.~~ x_{t+1} &= \bar Ax_t + \bar Bu_t + w_t, ~~w_t\sim\mathcal{N}(0,I),    
 \end{split}
\end{equation}
where
\begin{align*}
    \bar A &= A-BK_0, ~~~~\bar B = B\\
    \bar Q & = Q - K_0^\top S - S^\top K_0 + K_0^\top R K_0\\
    \bar S &= S- K_0^\top R,~~~~\bar R = R.
\end{align*}
Since from Assumption \ref{assump:stabilizability}, $\bar A$ now satisfies the exponential stability assumption (Assumption \ref{assump:A-exponential-stable}). Additionally, we can show that $\bar A$ is $(a + bk_0N, \gammasys)$-SED, $\bar B$ is $(b,\gammasys)$-SED, $\bar Q$ is $(q+2sfN+k_0^2rN^2)$-SED, $\bar S$ is $(S-k_0rN)$-SED, $\bar R$ is $(r,\gammasys)$-SED. Thus Assumption \ref{assump:exponential-decay} is also satisfies. Thus, we could apply Theorem \ref{theorem:spatial-decay-F} to problem \eqref{eq:LQR-unstable-re-express} and get that the optimal controller is $\bar u_t = \bar K x_t$, where the optimal control gain $\bar K$ is $(c_K, \gamma_K)$-SED. Then the optimal control gain $K$ for the original problem \ref{eq:LQR} is $K = \bar K - K_0$, which is also SED.
\end{proof}

\subsection{Proof of Theorem \ref{theorem:performance-truncated}}\label{apdx:performance-truncated}
We first show that for $\kappa$ large enough, $\Ftrunc$ is a good approximation of $K$ both in terms of $\|\cdot\|$ and $\|\cdot\|_F$.
\begin{lemma}\label{lemma:bound F-Ftrunc}
\begin{align*}
    &\|K - \Ftrunc\| \le \sqrt{N} c_K e^{-\gamma_K \kappa},\\
    &\|K - \Ftrunc\|_F \le \sqrt{N\min\{n_x,n_u\}} c_K e^{-\gamma_K\kappa}
\end{align*}
\begin{proof}
From the definition of $\Ftrunc$ and that $K$ is $(c_K,\gamma_K)$-SED we have that $\|[K-\Ftrunc]_{ij}\|\le c_K e^{-\gamma\kappa}$. Thus for $x\in \bR^{n_x}$,
\begin{align*}
    &\hspace{-25pt}\left\|[(K\!-\!\Ftrunc) x]_i\right\|\! = \!\left\|\sum_{j}[K\!-\!\Ftrunc]_{ij}x_j\right\| \!\le\! c_Fe^{\!-\gamma_K\kappa} \!\sum_{j=1}^N\!\|x_j\|\\
    \Longrightarrow~~ &\left\|K-\Ftrunc x\right\|^2 \le \left(c_Fe^{-\gamma_K\kappa}\right)^2\left(\sum_{j=1}^N\|x_j\|\right)^2\\
    &\le N\left(c_Fe^{-\gamma_K\kappa}\right)^2\left(\sum_{j=1}^N\|x_j\|^2\right) = N\left(c_Fe^{-\gamma_K\kappa}\right)^2\|x\|^2\\
    \Longrightarrow~~ &\|K-\Ftrunc\|\le \sqrt{N}c_K e^{-\gamma_K\kappa}.
\end{align*}
Additionally, from the fact that $\|X\|_F \le \sqrt{\textup{rank}(X)}\|X\|$, where $\textup{rank}(X)$ is the rank of matrix $X$, we have that
\begin{equation*}
    \|K-\Ftrunc\|\le \sqrt{N\min\{n_x,n_u\}} c_K e^{-\gamma_K\kappa}
\end{equation*}
\end{proof}
\end{lemma}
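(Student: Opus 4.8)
The entire argument rests on one elementary observation about the truncation, so the plan is to isolate that first and then push it through two standard norm conversions. By the definition of $\Ftrunc$, the difference $K-\Ftrunc$ retains precisely the blocks with $\dist(i,j)\ge\kappa$ and zeroes out the rest; hence $[K-\Ftrunc]_{ij}=[K]_{ij}$ when $\dist(i,j)\ge\kappa$ and $[K-\Ftrunc]_{ij}=0$ otherwise. Combining this with the $(c_K,\gamma_K)$-SED property of $K$ (Definition~\ref{def:SED}) gives the uniform blockwise estimate $\|[K-\Ftrunc]_{ij}\|\le c_K e^{-\gamma_K\dist(i,j)}\le c_K e^{-\gamma_K\kappa}$ for every pair $(i,j)$, which is the only quantitative input the remainder of the proof requires.

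For the spectral-norm bound I would test against an arbitrary $x=[x_1;\cdots;x_N]$. Writing the $i$-th output block $[(K-\Ftrunc)x]_i=\sum_j[K-\Ftrunc]_{ij}x_j$ and applying the triangle inequality with the blockwise estimate yields $\|[(K-\Ftrunc)x]_i\|\le c_K e^{-\gamma_K\kappa}\sum_{j=1}^N\|x_j\|$. Summing the squares over the $N$ output blocks and then comparing the $\ell_1$ and $\ell_2$ norms of the vector $(\|x_1\|,\dots,\|x_N\|)$ via Cauchy--Schwarz, $\big(\sum_j\|x_j\|\big)^2\le N\sum_j\|x_j\|^2=N\|x\|^2$, collapses the right-hand side to a constant multiple of $\|x\|^2$; taking square roots and a supremum over unit $x$ produces the claimed $\|K-\Ftrunc\|\le\sqrt{N}\,c_K e^{-\gamma_K\kappa}$. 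The Frobenius bound then follows for free: since $K-\Ftrunc\in\bR^{n_u\times n_x}$ has rank at most $\min\{n_x,n_u\}$, the standard inequality $\|X\|_F\le\sqrt{\textup{rank}(X)}\,\|X\|$ gives $\|K-\Ftrunc\|_F\le\sqrt{\min\{n_x,n_u\}}\,\|K-\Ftrunc\|\le\sqrt{N\min\{n_x,n_u\}}\,c_K e^{-\gamma_K\kappa}$.

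The step I expect to be the real obstacle is the bookkeeping of the power of $N$ in passing from the $N^2$ blockwise bounds to a single spectral-norm estimate. A crude organization of the two summations --- summing $\big(c_K e^{-\gamma_K\kappa}\big)^2\big(\sum_j\|x_j\|\big)^2$ over all $N$ output blocks and only afterwards applying Cauchy--Schwarz --- produces a factor $N$ rather than the sharper $\sqrt{N}$ in the statement, and the uniform Schur-test bound (maximum block-row sum times maximum block-column sum) likewise delivers only $N\,c_K e^{-\gamma_K\kappa}$ in the worst case. Reaching $\sqrt{N}$ therefore hinges on arranging the Cauchy--Schwarz so that exactly one factor of $\sqrt{N}$ survives, and I would verify carefully whether this is justified under the bare blockwise hypothesis or whether it must instead exploit the genuine spatial decay $e^{-\gamma_K\dist(i,j)}$ (beyond its uniform value $e^{-\gamma_K\kappa}$) together with a growth condition on the graph. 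Because both the threshold on $\kappa$ and the final cost estimate in Theorem~\ref{theorem:performance-truncated} inherit this power of $N$, pinning down the exact exponent here is the crux of the lemma.
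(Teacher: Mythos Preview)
Your approach is identical to the paper's: establish the uniform block bound $\|[K-\Ftrunc]_{ij}\|\le c_K e^{-\gamma_K\kappa}$, push it through to a spectral-norm estimate by testing against an arbitrary $x$, and then convert to Frobenius via $\|X\|_F\le\sqrt{\mathrm{rank}(X)}\,\|X\|$. So there is no methodological difference to discuss.

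Your third paragraph, however, puts its finger on a genuine issue that the paper's own proof glosses over. The paper writes
\[
\big\|[(K-\Ftrunc)x]_i\big\|\le c_K e^{-\gamma_K\kappa}\sum_{j=1}^N\|x_j\|
\quad\Longrightarrow\quad
\big\|(K-\Ftrunc)x\big\|^2\le \big(c_K e^{-\gamma_K\kappa}\big)^2\Big(\sum_{j=1}^N\|x_j\|\Big)^2,
\]
silently dropping the sum over the $N$ output blocks on the left; the honest step is $\|(K-\Ftrunc)x\|^2=\sum_i\|[(K-\Ftrunc)x]_i\|^2\le N\big(c_K e^{-\gamma_K\kappa}\big)^2\big(\sum_j\|x_j\|\big)^2$, and after Cauchy--Schwarz on $\sum_j\|x_j\|$ one obtains $\|K-\Ftrunc\|\le N\,c_K e^{-\gamma_K\kappa}$, not $\sqrt{N}$. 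Your instinct that the uniform blockwise bound alone cannot do better is correct: with every block equal to a scalar $c_K e^{-\gamma_K\kappa}$ the matrix is $c_K e^{-\gamma_K\kappa}\,\mathbf{1}\mathbf{1}^\top$, whose spectral norm is $N c_K e^{-\gamma_K\kappa}$. So the $\sqrt{N}$ in the lemma (and consequently in the threshold and the constant of Theorem~\ref{theorem:performance-truncated}) is a slip in the paper; the argument as written supports only the weaker factor $N$. This does not change any qualitative conclusion---the dependence on $\kappa$ is still $e^{-\gamma_K\kappa}$---but you should not expect to recover $\sqrt{N}$ by rearranging Cauchy--Schwarz, and you need not look for a sharper route through the genuine spatial decay to match the paper.
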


Given the above lemma, we can directly call Lemma B.1 in \cite{tu2019gap} and get the following exponential stability result for $\Ftrunc$.
\begin{lemma}[Corollary of Lemma B.1 of \cite{tu2019gap}]\label{lemma:Ftrunc-stability}
For 
\begin{equation*}
    \kappa \ge \frac{\ln\left(\frac{2\tau c_K\sqrt{N}\|B\|}{1-e^{-\rho}}\right)}{\gamma_K},
\end{equation*}
$(A-B\Ftrunc)$~ is $ (\tau,\frac{1+e^{-\rho}}{2})$-stable.
\begin{proof}
From Lemma B.1 in \cite{tu2019gap}, we have that $A-B\Ftrunc$ satisfies $ \|(A-B\Ftrunc)^k\| \le \tau \left(\frac{1+e^{-\rho}}{2}\right)^k$ if
\begin{equation*}
     \|K-\Ftrunc\|\le \frac{1-e^{-\rho}}{2\tau \|B\|}.
\end{equation*}
Given Lemma \ref{lemma:bound F-Ftrunc}, the above condition is naturally satisfied if
\begin{align*}
    \sqrt{N} c_K e^{-\gamma_K \kappa} \le \frac{1-e^{-\rho}}{2\tau \|B\|}~\Longleftrightarrow~ \kappa \ge \frac{\ln\left(\frac{2\tau c_K\sqrt{N}\|B\|}{1-e^{-\rho}}\right)}{\gamma_K}
\end{align*}
\end{proof}
\end{lemma}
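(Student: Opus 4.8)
The plan is to regard $A-B\Ftrunc$ as a norm-small perturbation of the exponentially stable closed-loop matrix $A-BK$ and to invoke a standard stability-robustness estimate. Writing $A-B\Ftrunc = (A-BK) + B(K-\Ftrunc)$, the truncated closed-loop matrix differs from the matrix $A-BK$ — which is $(\tau,e^{-\rho})$-stable by the closed-loop stability in \eqref{eq:open-closed-loop-stable} — only through the additive term $B(K-\Ftrunc)$, whose operator norm is at most $\|B\|\,\|K-\Ftrunc\|$. So the whole argument reduces to making this perturbation small and feeding it into a robustness lemma.

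First I would control the perturbation size using the $(c_K,\gamma_K)$-SED structure of $K$: by Lemma \ref{lemma:bound F-Ftrunc} we have $\|K-\Ftrunc\|\le \sqrt{N}\,c_K e^{-\gamma_K\kappa}$, which decays exponentially in the truncation radius $\kappa$, so any prescribed threshold can be met by taking $\kappa$ large. Next I would apply Lemma B.1 of \cite{tu2019gap}, in the form: if $M$ is $(\tau,\beta)$-stable and $\|E\|\le \frac{1-\beta}{2\tau}$, then $M+E$ is $(\tau,\frac{1+\beta}{2})$-stable. Taking $M=A-BK$, $\beta=e^{-\rho}$, and $E=B(K-\Ftrunc)$, the admissibility condition $\|E\|\le\frac{1-e^{-\rho}}{2\tau}$ becomes $\|K-\Ftrunc\|\le \frac{1-e^{-\rho}}{2\tau\|B\|}$.

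Finally I would combine the two bounds: it suffices that $\sqrt{N}\,c_K e^{-\gamma_K\kappa}\le \frac{1-e^{-\rho}}{2\tau\|B\|}$, which rearranges to $\kappa\ge \frac{1}{\gamma_K}\ln\!\left(\frac{2\tau c_K\sqrt{N}\|B\|}{1-e^{-\rho}}\right)$, exactly the stated threshold. For such $\kappa$ the admissibility condition holds and Lemma B.1 yields that $A-B\Ftrunc$ is $(\tau,\frac{1+e^{-\rho}}{2})$-stable.

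The main obstacle is the underlying robustness bound itself, which is cited rather than reproved. Were I to establish it, I would proceed through a resolvent estimate: from $\|M^k\|\le\tau\beta^k$ one obtains $\|(I-zM)^{-1}\|\le \tau/(1-\beta|z|)$ for $\beta|z|<1$, and the factorization $I-z(M+E)=(I-zM)\bigl(I-z(I-zM)^{-1}E\bigr)$ gives $\|(I-z(M+E))^{-1}\|\le \tau/\bigl(1-|z|(\beta+\tau\|E\|)\bigr)$, which identifies the perturbed decay rate as $\beta+\tau\|E\|$; requiring $\beta+\tau\|E\|\le\frac{1+\beta}{2}$ then forces precisely the threshold $\|E\|\le\frac{1-\beta}{2\tau}$. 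The delicate point is carrying the $\tau$ prefactor cleanly through this estimate so that the constant matches; given the cited result, the remaining steps are routine algebra.
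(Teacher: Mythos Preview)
Your proposal is correct and follows essentially the same route as the paper: invoke Lemma B.1 of \cite{tu2019gap} to reduce stability of $A-B\Ftrunc$ to the perturbation bound $\|K-\Ftrunc\|\le\frac{1-e^{-\rho}}{2\tau\|B\|}$, then use Lemma \ref{lemma:bound F-Ftrunc} to translate this into the stated threshold on $\kappa$. Your additional resolvent sketch for the cited robustness lemma goes beyond what the paper does (it simply cites the result), but otherwise the arguments coincide.
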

We are now almost ready to prove Theorem \ref{theorem:performance-truncated}. Before that, we first state an existing lemma that will be useful in the proof. We refer readers to the original papers for proof of the lemma.
\begin{lemma}(Lemma C.3 in \cite{krauth2019finite}, Lemmma 12 in \cite{fazel2018global}) \label{lemma:cost-difference-Ftrunc}
If $(A-BF')$ is $(\tau,e^{-\rho})$-stable, then
\begin{equation*}
    C(K') - C(K) \le \frac{\tau}{1-\rho^2}\|R + B^\top PB\|\|K' - K\|_F^2,
\end{equation*}
where $K$ is the optimal controller defined in \eqref{eq:F-expr} and $P$ is its cost-to-go matrix defined in \eqref{eq:ricatti-eq}.
\end{lemma}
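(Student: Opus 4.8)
The plan is to reduce the statement to the exact ``cost-difference'' (advantage) identity for average-cost LQR and then bound the resulting trace by the closed-loop covariance. Throughout I write $M := R + B^\top P B$ and, for the stabilizing gain $K'$, let $A_{K'} := A - B K'$ be the closed-loop matrix and $\Sigma_{K'} := \sum_{t=0}^{\infty} A_{K'}^t (A_{K'}^t)^\top$ the stationary state covariance of the noise-driven system $x_{t+1} = A_{K'} x_t + w_t$, so that $C(K')$ equals the stationary expectation $\mathbb{E}[c(x_t,u_t)]$ of the stage cost $c(x,u) = x^\top Q x + u^\top R u + 2 u^\top S x$ under the feedback $u_t = -K' x_t$. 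The first, purely algebraic, step is a completion of squares for the ``$Q$-function'' $f(x,u) := c(x,u) + (Ax+Bu)^\top P (Ax+Bu)$. Writing $f$ as a quadratic in $u$ with Hessian $2M$ and using the Riccati equation \eqref{eq:ricatti-eq} together with the closed form \eqref{eq:F-expr} of $K$, I would verify the identity $f(x,u) = x^\top P x + (u + Kx)^\top M (u+Kx)$; here the Riccati equation is exactly what makes the $x$-dependent remainder collapse to $x^\top P x$, and \eqref{eq:F-expr} is what makes $-Kx$ the unconstrained minimizer, so the optimality of $K$ is fully absorbed into this step.

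Next I would run a steady-state telescoping argument under the feedback $u_t = -K' x_t$. Since $A x_t + B u_t = A_{K'} x_t$, the completed-square identity gives $c(x_t,u_t) = x_t^\top P x_t - x_t^\top A_{K'}^\top P A_{K'} x_t + x_t^\top (K-K')^\top M (K-K') x_t$. Taking stationary expectations and using $\mathbb{E}[x_{t+1}^\top P x_{t+1}] = \mathbb{E}[x_t^\top A_{K'}^\top P A_{K'} x_t] + \tr(P)$ (the $\tr(P)$ coming from the unit noise covariance) together with stationarity $\mathbb{E}[x_{t+1}^\top P x_{t+1}] = \mathbb{E}[x_t^\top P x_t]$, the first two terms telescope to $\tr(P) = C(K)$. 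This yields the exact identity
\begin{equation*}
C(K') - C(K) = \tr\!\left(\Sigma_{K'} (K-K')^\top M (K-K')\right).
\end{equation*}

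Finally I would bound the trace. Since $\Sigma_{K'}\succeq 0$, $M\succeq 0$, and $(K-K')^\top(K-K')\succeq 0$, two applications of the inequality $\tr(XY)\le \|X\|\tr(Y)$ (valid for symmetric $X$ and PSD $Y$) give $\tr(\Sigma_{K'}(K-K')^\top M(K-K')) \le \|M\|\,\|\Sigma_{K'}\|\,\|K-K'\|_F^2$. The covariance norm is controlled by the assumed $(\tau,e^{-\rho})$-stability of $A_{K'}$: $\|\Sigma_{K'}\| \le \sum_{t\ge 0}\|A_{K'}^t\|^2 \le \sum_{t\ge 0}\tau^2 e^{-2\rho t} = \tfrac{\tau^2}{1-e^{-2\rho}}$, which is exactly the stability-dependent constant appearing in front of $\|R+B^\top P B\|\,\|K'-K\|_F^2$ in the statement.

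I expect the main obstacle to be the rigorous justification of the steady-state/average-cost step rather than the algebra: one must argue that under $(\tau,e^{-\rho})$-stability the Ces\`aro-average cost equals the stationary expectation, that $\Sigma_{K'}$ is well-defined and finite, and that the boundary terms in the telescoping vanish in the $\tfrac1T$-limit. Once the exact advantage identity is in hand, the remaining trace and covariance bounds are routine, and the only remaining subtlety is bookkeeping the stability convention: the hypothesis $\|A_{K'}^k\|\le \tau e^{-\rho k}$ naturally produces the constant $\tau^2/(1-e^{-2\rho})$, which matches the constant stated in the lemma under that convention.
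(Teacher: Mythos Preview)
Your argument is correct and is exactly the standard advantage-decomposition proof from the cited references (Fazel et al.\ 2018, Krauth et al.\ 2019): completion of squares via the Riccati equation to get the exact identity $C(K')-C(K)=\tr(\Sigma_{K'}(K'-K)^\top(R+B^\top PB)(K'-K))$, followed by $\tr(XY)\le\|X\|\tr(Y)$ and the bound $\|\Sigma_{K'}\|\le\tau^2/(1-e^{-2\rho})$. The paper itself does not give a proof of this lemma---it only cites those two references---so there is nothing to compare against beyond noting that you have reproduced the cited argument; your observation that the stability hypothesis naturally yields $\tau^2/(1-e^{-2\rho})$ rather than the $\tau/(1-\rho^2)$ printed in the statement is also correct (the paper's own application of the lemma in the proof of Theorem~\ref{theorem:performance-truncated} is consistent with your constant, up to the same $\tau$ vs.\ $\tau^2$ slip).
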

\begin{proof}[Proof of Theorem \ref{theorem:performance-truncated}]
The proof of Theorem \ref{theorem:performance-truncated} is a combination of Lemma \ref{lemma:Ftrunc-stability} and \ref{lemma:cost-difference-Ftrunc}. From Lemma \ref{lemma:Ftrunc-stability}, we know that for $\kappa\ge \frac{\ln\left(\frac{2\tau c_K\sqrt{N}\|B\|}{1-e^{-\rho}}\right)}{\gamma_K}$, $ \|(A-B\Ftrunc)^k\| \le \tau \left(\frac{1+e^{-\rho}}{2}\right)^k$. Then, by applying Lemma \ref{lemma:cost-difference-Ftrunc},
\begin{align*}
    &\quad C(\Ftrunc) - C(K)\\
    &\le \frac{\tau}{1-\left(\frac{1+e^{-\rho}}{2}\right)^2} \|R+B^\top P B\|\|\Ftrunc - K\|_F^2\\
    &\le  \frac{\tau}{1-\left(\frac{1+e^{-\rho}}{2}\right)}\|R+B^\top P B\|\sqrt{N\min\{n_x,n_u\}} c_K e^{-\gamma_K\kappa}\\
    &= \frac{2\tau}{1-e^{-\rho}} \|R+B^\top P B\|\sqrt{N\min\{n_x,n_u\}} c_K e^{-\gamma_K\kappa},
\end{align*}
which completes the proof.
\end{proof}

\subsection{Auxiliary Lemmas}\label{apdx:auxiliary}
\begin{lemma}\label{lemma:bound-G-norm} 
For any $m\ge0$, G defined in \eqref{eq:def-G} satisfies
\begin{equation*}
    \|GA^m\| \le \frac{\tau^2\|Q\|e^{-\rho m}}{1-e^{-2\rho}}.
\end{equation*}
\end{lemma}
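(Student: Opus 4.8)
The plan is to expand $G$ using its series definition from \eqref{eq:def-G}, factor the extra power of $A$ into the summand, and then bound term-by-term using submultiplicativity of the operator norm together with the $(\tau,e^{-\rho})$-stability hypothesis $\|A^k\|\le\tau e^{-\rho k}$.

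First I would write
\begin{equation*}
    GA^m = \left(\sum_{t=0}^\infty (A^t)^\top Q A^t\right)A^m = \sum_{t=0}^\infty (A^t)^\top Q A^{t+m},
\end{equation*}
where interchanging the infinite sum with right-multiplication by $A^m$ is justified because the series defining $G$ converges absolutely in operator norm (each term is bounded by $\tau^2\|Q\|e^{-2\rho t}$, whose sum is finite since $\rho>0$). Then, applying the triangle inequality followed by submultiplicativity, each term obeys $\|(A^t)^\top Q A^{t+m}\| \le \|A^t\|\,\|Q\|\,\|A^{t+m}\| \le \tau e^{-\rho t}\cdot\|Q\|\cdot\tau e^{-\rho(t+m)} = \tau^2\|Q\|e^{-\rho m}e^{-2\rho t}$.

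Summing the resulting geometric series in $t$ gives
\begin{equation*}
    \|GA^m\| \le \tau^2\|Q\|e^{-\rho m}\sum_{t=0}^\infty e^{-2\rho t} = \frac{\tau^2\|Q\|e^{-\rho m}}{1-e^{-2\rho}},
\end{equation*}
which is exactly the claimed bound. There is no real obstacle here: the statement is a routine consequence of geometric decay, and the only point requiring a word of care is the absolute convergence that legitimizes pulling $A^m$ through the sum and applying the triangle inequality termwise — which itself follows immediately from the same $(\tau,e^{-\rho})$-stability estimate. The case $m=0$ is included and simply reduces to the standard norm bound $\|G\|\le \tau^2\|Q\|/(1-e^{-2\rho})$.
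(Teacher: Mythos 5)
Your proposal is correct and follows essentially the same argument as the paper: expand $GA^m = \sum_{t=0}^\infty (A^t)^\top Q A^{t+m}$, bound each term by $\tau^2\|Q\|e^{-\rho(2t+m)}$ via submultiplicativity and the $(\tau,e^{-\rho})$-stability assumption, and sum the geometric series. Your extra remark on absolute convergence is a fine (if routine) addition that the paper leaves implicit.
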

\begin{proof}
From the definition of $G$, we have
\begin{align*}
    \|GA^m\| &= \|\sum_{t=0}^\infty (A^t)^\top Q A^{t+m}\|\\
    &\le \sum_{t=0}^\infty \|Q\|\|A^t\|\|A^{t+m}\|\\
    &\le \|Q\|\sum_{t=0}^\infty \tau^2 e^{-\rho(2t+m)}\quad\text{(Assumption \ref{assump:A-exponential-stable})}\\
    &=\frac{\tau^2\|Q\|e^{-\rho m}}{1-e^{-2\rho}},
\end{align*}which completes the proof.
\end{proof}

\begin{lemma}\label{lemma:tf-second-moment}
Let $W(z)$ be the transfer function for $\{w_t\}$, defined as
\begin{equation*}
    W(z):= \sum_{t=0}^\infty z^{-t} w_t,~~w_t\sim \mathcal{N}(0,I),~i.i.d.
\end{equation*}
Let $T(z)$ be a stable transfer function matrix
$$T(z) = \sum_{t=0}^{+\infty} z^{-t} T_t.$$
Then for the  function $$X(z) := T(z)W(z)$$
which can be expanded as $\sum_{t=0}^\infty z^{-t} x_t,$
we have that
\begin{equation}\label{eq:lim-transfer}
    \lim_{t\to+\infty} \mathbb{E} x_tx_t^\top = \sum_{t=0}^\infty T_tT_t^\top.
\end{equation}
\end{lemma}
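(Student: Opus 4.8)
The plan is to unwind the $z$-transform identity $X(z)=T(z)W(z)$ into an explicit (and, for each fixed $t$, \emph{finite}) convolution, compute the second moment $\mathbb{E}[x_tx_t^\top]$ directly from the independence of the noise, and then pass to the limit $t\to\infty$ using the stability of the impulse response $\{T_t\}$. First I would multiply the power series $T(z)=\sum_{s\ge 0}z^{-s}T_s$ and $W(z)=\sum_{r\ge 0}z^{-r}w_r$ and match coefficients of $z^{-t}$ in $X(z)=\sum_{t\ge 0}z^{-t}x_t$; since for each fixed $t$ this is a finite identity (Cauchy product), it yields $x_t=\sum_{s=0}^{t}T_s\,w_{t-s}$.

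Next, using $\mathbb{E}[w_r]=0$ and $\mathbb{E}[w_rw_{r'}^\top]=\delta_{rr'}I$, I would expand $\mathbb{E}[x_tx_t^\top]=\sum_{s=0}^{t}\sum_{s'=0}^{t}T_s\,\mathbb{E}[w_{t-s}w_{t-s'}^\top]\,T_{s'}^\top=\sum_{s=0}^{t}T_sT_s^\top$, since only the diagonal terms $s=s'$ survive (and $t-s,t-s'\ge 0$ throughout). Finally, stability of $T(z)$ gives $\sum_{s\ge 0}\|T_s\|^2<\infty$ — in every place this lemma is invoked, $\|T_t\|$ in fact decays geometrically by \eqref{eq:open-closed-loop-stable} — so the partial sums $\sum_{s=0}^{t}T_sT_s^\top$ are Cauchy in the (complete, finite-dimensional) matrix space and converge to $\sum_{s=0}^{\infty}T_sT_s^\top$ as $t\to\infty$, which is exactly \eqref{eq:lim-transfer}.

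There is no deep obstacle here; the only care needed is to make the manipulation of the a-priori-formal power series rigorous (justified because the coefficient matching in the first step is a finite sum for each $t$, or alternatively because all three series converge absolutely for $|z|$ large) and to record that ``stable'' is precisely the hypothesis that makes the limit in the last step exist. I would additionally remark, since this is how the lemma is actually applied (e.g.\ to evaluate $\lim_{T\to\infty}\frac{1}{T}\mathbb{E}\sum_{t=0}^{T-1}x_t^\top Qx_t$), that one then invokes the elementary Cesàro fact that $\frac{1}{T}\sum_{t=0}^{T-1}\mathbb{E}[x_tx_t^\top]$ has the same limit as $\mathbb{E}[x_tx_t^\top]$, together with linearity and continuity of the trace; these are standard and require no separate argument.
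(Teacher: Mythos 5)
Your proposal is correct and follows essentially the same route as the paper's proof: expand $x_t=\sum_{s=0}^t T_s w_{t-s}$ by matching coefficients, use the i.i.d.\ standard-normal property to get $\mathbb{E}[x_tx_t^\top]=\sum_{s=0}^t T_sT_s^\top$, and let $t\to\infty$ using stability. The extra remarks on rigor (finite Cauchy product, absolute convergence) are fine but not a departure from the paper's argument.
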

\begin{proof}
For any $t\ge0$, we have 
$$x_t = \sum_{t'=0}^t T_{t'} w_{t-t'}.$$
As $w_t$'s are i.i.d. standard Normal distributed, we have
$$\mathbb{E} x_tx_t^\top = \sum_{t'=0}^t T_{t'}T_{t'}^\top.$$
Letting $t$ goes to infinity and using the fact that $T$ is stable, we get \eqref{eq:lim-transfer}.
\end{proof}

\begin{lemma}\label{lem:lmax}
For any $H$-by-$H$ block matrix $X$ whose $(H,m)$-th block is $X_{km}$, $1\le k,m\le H$, we have 
\begin{equation}
    \lmax(X)\le\max_k\sum_{m=1}^H \|X_{km}\|.
\end{equation}
\end{lemma}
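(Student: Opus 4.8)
The plan is to prove this as the block analogue of the classical row-sum bound $\rho(A)\le\max_i\sum_j|a_{ij}|$, i.e.\ that the induced $\ell_\infty$-type norm dominates the spectral radius. In all our applications the matrix in question (namely $\MK$) is symmetric, so $\lmax(X)$ is a genuine eigenvalue admitting a real eigenvector, and the cleanest route is a direct argument through an extremal eigenvector.

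Concretely, first I would let $\lambda=\lmax(X)$ and fix a nonzero eigenvector $v$ with $Xv=\lambda v$, partitioned conformally with the block structure as $v=[v_1;v_2;\cdots;v_H]$. Reading off the $k$-th block row of the eigenvalue equation gives $\lambda v_k=\sum_{m=1}^H X_{km}v_m$ for every $k$. Next I would pick an index $k^\star\in\argmax_k\|v_k\|$; since $v\neq 0$ we have $\|v_{k^\star}\|>0$. Taking norms in the $k^\star$-th block row, using the triangle inequality and the definition of the operator norm,
\[
|\lambda|\,\|v_{k^\star}\|=\Big\|\sum_{m=1}^H X_{k^\star m}v_m\Big\|\le\sum_{m=1}^H\|X_{k^\star m}\|\,\|v_m\|\le\|v_{k^\star}\|\sum_{m=1}^H\|X_{k^\star m}\|.
\]
Dividing by $\|v_{k^\star}\|>0$ yields $\lmax(X)=\lambda\le|\lambda|\le\sum_{m=1}^H\|X_{k^\star m}\|\le\max_k\sum_{m=1}^H\|X_{km}\|$, which is exactly the claim.

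As an alternative route (and one that also covers a general, not necessarily symmetric $X$ if one reads $\lmax$ as the spectral radius), I would instead verify that the map $X\mapsto\max_k\sum_m\|X_{km}\|$ is a submultiplicative matrix norm on matrices carrying this fixed $H$-by-$H$ partition: the triangle inequality follows blockwise from the triangle inequality for $\|\cdot\|$, and submultiplicativity from $\|(XY)_{km}\|\le\sum_\ell\|X_{k\ell}\|\,\|Y_{\ell m}\|$ after interchanging the order of the double sum over $m$ and $\ell$. One then invokes the standard fact that the spectral radius is bounded above by every submultiplicative matrix norm, together with the elementary observation $\lmax(X)\le|\lambda_{\max}(X)|\le\rho(X)$.

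I do not anticipate any genuine obstacle here — the statement is essentially a bookkeeping exercise. The only points deserving a moment of care are that $\|v_{k^\star}\|$ is strictly positive, so that the division producing the final bound is legitimate, and, in the alternative argument, making sure the double-sum rearrangement in the submultiplicativity check is carried out in the right order (summing over the inner index $m$ first).
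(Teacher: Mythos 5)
Your main argument (pick an eigenvector for $\lmax(X)$, look at the block row indexed by $k^\star\in\argmax_k\|v_k\|$, apply the triangle inequality and divide by $\|v_{k^\star}\|>0$) is exactly the proof given in the paper, and it is correct. The alternative route via a submultiplicative block $\ell_\infty$-type norm is fine but unnecessary here.
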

\begin{proof}
Any eigenvalue $\lambda$ and its corresponding eigenvector $v = (v_k)$ satisfies
$$Xv = \lambda v \quad\Rightarrow\quad \lambda v_k = \sum_{m=1}^H X_{km}v_m,~\forall 1\le k\le H.$$
Choosing $k^*$ such that $\|v_{k^*}\| = \max_m \|v_m\|$ gives that 
\begin{align*}
    &\quad |\lambda|\|v_{k^*}\|\le\sum_{m=1}^H\|X_{k^*m}\|\|v_m\|\le \sum_{m=1}^H\|X_{k^*m}\|\|v_{k^*}\|\\
    &\le \|v_{k^*}\| \max_k\sum_{m=1}^H \|X_{km}\|,
\end{align*}
which gives $|\lambda|\le \max_k\sum_{m=1}^H \|X_{km}\|$ for any eigenvalue $\lambda$. In specific, it holds for $\lambda=\lmax(X).$
\end{proof}

\begin{lemma}\label{lemma:matrix-M}
For any $H\ge0$, it holds that
\begin{equation}\label{eq:matrix-SK}
    \LambdaK = \sum_{t=1}^{+\infty}\mathbf{A}_t^{(H+1)}{}^\top Q\mathbf{A}_t^{(H+1)}.
\end{equation}
\end{lemma}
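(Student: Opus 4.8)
The plan is to verify the identity block by block. Both sides of \eqref{eq:matrix-SK} are $(H+1)\times(H+1)$ block matrices with $n_x\times n_x$ blocks indexed by $1\le k,m\le H+1$, so it suffices to match the $(k,m)$ blocks. The only analytic input needed is Assumption~\ref{assump:A-exponential-stable}: it guarantees that $G=\sum_{t\ge0}(A^t)^\top Q A^t$ converges absolutely and, by the same geometric bound, that the series on the right-hand side of \eqref{eq:matrix-SK} converges absolutely; I would record this first so that all the rearrangements below are legitimate.

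Next I would unpack the definition of $\mathbf{A}_t^{(H+1)}$. Reading off its two cases, its $k$-th block equals $A^{t-k}$ when $1\le k\le t$ and equals $0$ when $k>t$, uniformly over $1\le k\le H+1$. Hence the $(k,m)$ block of $\mathbf{A}_t^{(H+1)\top}Q\,\mathbf{A}_t^{(H+1)}$ equals $(A^{t-k})^\top Q A^{t-m}$ when $t\ge\max\{k,m\}$ and vanishes otherwise, so the $(k,m)$ block of the right-hand side of \eqref{eq:matrix-SK} is $\sum_{t\ge\max\{k,m\}}(A^{t-k})^\top Q A^{t-m}$.

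It then remains to evaluate this sum and match it against $\Lambda_{km}$ from \eqref{eq:def-Lambda}. If $k\ge m$, the substitution $s=t-k\ge0$ gives $\sum_{s\ge0}(A^s)^\top Q A^{s+(k-m)}=\big(\sum_{s\ge0}(A^s)^\top Q A^s\big)A^{k-m}=GA^{k-m}=\Lambda_{km}$. If $k<m$, the substitution $s=t-m\ge0$ gives $\sum_{s\ge0}(A^{s+(m-k)})^\top Q A^{s}=(A^{m-k})^\top\big(\sum_{s\ge0}(A^s)^\top Q A^s\big)=(A^{m-k})^\top G=\Lambda_{km}$. Since this holds for every block, $\LambdaK=\sum_{t\ge1}\mathbf{A}_t^{(H+1)\top}Q\,\mathbf{A}_t^{(H+1)}$.

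There is no real obstacle here: the argument is a direct computation, and the only thing to be careful about is the index bookkeeping --- tracking which blocks of $\mathbf{A}_t^{(H+1)}$ vanish so that the $t$-sum effectively begins at $\max\{k,m\}$, and pulling the common power $A^{k-m}$ (respectively $(A^{m-k})^\top$) out of the series to recognize $G$. It is also worth checking that the two cases agree on the diagonal $k=m$, where both reduce to $G=GA^0=(A^0)^\top G$, matching $\Lambda_{kk}$.
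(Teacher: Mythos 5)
Your proposal is correct and follows essentially the same route as the paper's proof: a block-by-block computation showing the $(k,m)$ block of the right-hand side telescopes (after reindexing) to $GA^{k-m}$ for $k\ge m$, hence equals $\Lambda_{km}$, with the $k<m$ case obtained symmetrically. Your version merely adds the (welcome but routine) bookkeeping of where the blocks of $\mathbf{A}_t^{(H+1)}$ vanish and the convergence remark via Assumption~\ref{assump:A-exponential-stable}.
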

\begin{proof} 
Recall the definition of $\mathbf{A}_t^{(H+1)}$ is 
\begin{align*}
    \mathbf{A}_t^{(H+1)} &:= 
    \begin{cases}
        [A^{t-1}, A^{t-2},\dots, A, I, 0,\dots, 0], & k< K+1 \\
        \left[A^{t-1},A^{t-2},\dots,A^{t-K-1}\right], & k\ge K+1
    \end{cases}.
\end{align*}
Then for $1\le m\le k \le H+1$, the $(H,m)$-th block of the right-hand side of \eqref{eq:matrix-SK} is
\begin{align*}
    \sum_{t=0}^\infty (A^t)^{\!\top}\! Q A^{t\!+\!k\!-\!m} \!= \!\!\left(\!\sum_{t=0}^\infty (A^t)^\top\! Q A^{t}\!\right)A^{k\!-\!m} \!=\! GA^{k\!-\!m} \!=\! \Lambda_{km},
\end{align*}
where we used the definitions of $G$ and $\Lambda_{km}$ in \eqref{eq:def-G} and \eqref{eq:def-Lambda}, respectively. As for $k<m$, the symmetry gives the equality.
\end{proof}

\begin{lemma}\label{lemma:bound-AB}
Suppose $X\in \mathbb{R}^{n'\times n''}, Y\in \mathbb{R}^{n''\times n'''}$ (where $n', n'', n'''=n_x$ or $n_u$) are $(x,\gamma)$ and $(y,\gamma)$-SED, respectively. Then $XY$ is $(Nxy, \gamma)$-SED. Namely, if
\begin{align*}
    \|[X]_{ij}\|&\le xe^{-\gamma \dist(i,j)},\\ 
    \|[Y]_{ij}\|&\le ye^{-\gamma \dist(i,j)},
\end{align*}
then
\begin{equation*}
    \|[XY]_{ij}\| \le Nxye^{-\gamma \dist(i,j)}.
\end{equation*}
\end{lemma}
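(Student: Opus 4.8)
The plan is to expand the $(i,j)$-block of the product and apply the triangle inequality twice --- once for the induced matrix norm and once for the distance function $\dist$. First I would observe that block matrix multiplication with respect to the agent partition gives $[XY]_{ij} = \sum_{k=1}^N [X]_{ik}[Y]_{kj}$; this identity is valid because the shared inner dimension $n''$ is partitioned into the same $N$ agent blocks, so the column blocks of $X$ are conformable with the row blocks of $Y$.

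Next, by the triangle inequality and submultiplicativity of the induced $\ell_2$ norm, $\|[XY]_{ij}\| \le \sum_{k=1}^N \|[X]_{ik}\|\,\|[Y]_{kj}\|$. Invoking the SED hypotheses $\|[X]_{ik}\|\le x\,e^{-\gamma\dist(i,k)}$ and $\|[Y]_{kj}\|\le y\,e^{-\gamma\dist(k,j)}$, this is bounded by
\begin{equation*}
xy\sum_{k=1}^N e^{-\gamma(\dist(i,k)+\dist(k,j))}.
\end{equation*}
The final step uses the triangle inequality for the distance, $\dist(i,k)+\dist(k,j)\ge \dist(i,j)$ for every $k$, together with $\gamma>0$, so that each summand is at most $e^{-\gamma\dist(i,j)}$; since the sum has $N$ terms we obtain $\|[XY]_{ij}\|\le Nxy\,e^{-\gamma\dist(i,j)}$, which is precisely the claim.

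There is essentially no hard part here: the argument is a direct computation. The only points requiring care are bookkeeping --- ensuring the block indices are conformable (handled by the common agent partition of $n''$) and applying the distance triangle inequality in the correct direction. I would also remark that the factor $N$ is genuinely lost at each multiplication and therefore compounds under repeated products, which is exactly why the SED constants in the later results (e.g. Lemma~\ref{lemma:bound-AB-main-text} and the recursion for $L^{(H),t}$ in the proof of Theorem~\ref{theorem:spatially-decay-L}) accumulate powers of $N$ and $H$.
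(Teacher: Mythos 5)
Your proposal is correct and follows essentially the same argument as the paper's proof: expand the block of the product, bound it by $\sum_k \|[X]_{ik}\|\|[Y]_{kj}\|$, invoke the SED bounds, and use the triangle inequality $\dist(i,k)+\dist(k,j)\ge\dist(i,j)$ to collect the factor $N$. Your closing remark about the $N$ factor compounding under repeated products also matches how the constants accumulate in the paper's later induction.
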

\begin{proof}
\begin{align*}
    \|[XY]_{ij}\| &= \|\sum_{k=1}^N [X]_{ik}[Y]_{kj}\| \le \sum_{k=1}^N \|[X]_{ik}\|\|[Y]_{kj}\|\\
    & \le \sum_{k=1}^N xye^{-\gamma_0 \left(\dist(i,k)+\dist(H,j)\right)}\\
    &\le \sum_{k=1}^N xye^{-\gamma_0 \dist(i,j)} = Nxy e^{-\gamma_0 \dist(i,j)},
\end{align*}
where the last inequality follows from the triangular inequality of the distance.
\end{proof}

\subsection{Why SED?}\label{apdx:why-SED}
 One important motivation of studying SED systems arises from practical implementation/discretization of the continuous time network LQR control considered in vast literature (e.g. \cite{Bamieh02}):
\begin{align*}
\min_{\{u(t)\}} &\int_0^{+\infty} x(t)^\top Q_cx(t) + u(t)^\top R_cu(t)\\
&\dot{x} = A_c x + B_c u,
\end{align*}
In many applications, such as thermal dynamics (see Example \ref{example:circle} below), $A_c, B_c$ are sparse matrices with $[A_c]_{ij}, [B_c]_{ij} = 0$ if $\dist(i,j) >1$, i.e., the corresponding matrices entries will be zero if $i,j$ are not neighbors of each other. 

In order to practically solve the continuous time control problem, discretization is needed. Standard result shows that
\begin{align*}
    x(t+\Delta t) = e^{\Delta t A_c}x(t) + \left(\int_0^{\Delta t}e^{(t+\Delta t - s)A_c}B_cu(s)ds\right),
\end{align*}
thus we could apply the following discretization scheme .
\begin{align}\label{eq:discretization}
    A \!=\! e^{\Delta t A_c}, ~~B\!=\! \left(\!\int_0^{\Delta t}\!\!\!e^{sA_c}ds\!\right) B_c, ~~Q \!=\! \Delta t Q_c,~~ R \!=\! \Delta t R_c
\end{align}
Note that this discretization scheme is more accurate compared with doing forward Euler scheme (c.f. \cite{ascher1998computer}). However, for discretization scheme \eqref{eq:discretization} the sparsity structure is not preserved for $A,B$. Fortunately, it can be shown that although $A,B$ are not sparse, they are SED with respect to the original graph distance (Proposition \ref{prop:discretize-SED} ). Thus, in order to handle this type of problems, we consider a broader setting of SED systems rather than sparse network systems in this paper.

\begin{prop}\label{prop:discretize-SED}
For matrices $A_c, B_c$ that are sparse, i.e., $[A_c]_{ij}, [B_c]_{ij} = 0$ if $\dist(i,j) >1$, and consider discretization scheme \ref{eq:discretization}, then $A$ is  $(e^{\Delta t\|A_c\|},-\ln(\Delta t \|A_c\|))$-SED. $B$ is $((\Delta t)^2 \|A_c\|\|B_c\|e^{\Delta t \|A_c\|},-\ln(\Delta t \|A_c\|))$-SED.
\begin{proof}
\begin{align*}
    \|[A]_{ij}\| &= \|[e^{\Delta t A_c}]_{ij}\|= \left\|\left[\sum_{k=0}^{+\infty} \frac{(\Delta t A_c)^k}{k!}\right]_{ij}\right\|\\
    & =  \left\|\left[\sum_{k\ge \dist(i,j)}\frac{(\Delta t A_c)^k}{k!}\right]\right\|\\
    &\le \sum_{k\ge \dist(i,j)} \frac{(\Delta t \|A_c\|)^k}{k!}\\
    &\le (\Delta t \|A_c\|)^{\dist(i,j)}\sum_{k\ge \dist(i,j)} \frac{(\Delta t \|A_c\|)^{k-\dist(i,j)}}{k!}\\
    &\le (\Delta t \|A_c\|)^{\dist(i,j)}\sum_{k\ge 0} \frac{(\Delta t \|A_c\|)^k}{k!}\\
    &= e^{\Delta t \|A_c\|} e^{\ln(\Delta t \|A_c\|)\dist(i,j)}
\end{align*}
\begin{align*}
    &\quad     \|[B]_{ij}\|= \left\|\left[\left(\!\int_0^{\Delta t}\!\!\!e^{sA_c}ds\!\right) B_c\right]_{ij}\right\|\\
    &= \left\|\left[\left(\!\int_0^{\Delta t}\sum_{k=0}^{+\infty}\frac{(sA_c)^kB_c}{k!}\!\right) \right]_{ij}\right\|\\
    &=\left\|\left[\left(\!\int_0^{\Delta t}\sum_{k\ge \dist(i,j) - 1}\frac{(sA_c)^kB_c}{k!}\!\right) \right]_{ij}\right\|\\
    &\le \int_0^{\Delta t}\sum_{k\ge \dist(i,j) - 1}\frac{(s\|A_c\|)^k\|B_c\|}{k!}\\
    &\le \Delta t \sum_{k\ge \dist(i,j) - 1}\frac{(\Delta t\|A_c\|)^k\|B_c\|}{k!}\\
    &\le \Delta t (\Delta t \|A_c\|)^{-\dist(i,j)+1}\hspace{-10pt}\sum_{k\ge \dist(i,j) - 1}\hspace{-12pt}\frac{(\Delta t\|A_c\|)^{k+1-\dist(i,j)}\|B_c\|}{k!}\\
    &\le  (\Delta t)^2 \|A_c\|\|B_c\|(\Delta t \|A_c\|)^{-\dist(i,j)+1}\sum_{k\ge 0}\frac{(\Delta t\|A_c\|)^k}{k!}\\
    &= (\Delta t)^2 \|A_c\|\|B_c\|e^{\Delta t \|A_c\|} e^{\ln(\Delta t \|A_c\|)\dist(i,j)},
\end{align*}
which completes the proof.
\end{proof}
\end{prop}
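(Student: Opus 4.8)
The plan is to exploit the fact that the discretization scheme \eqref{eq:discretization} touches $A_c$ and $B_c$ only through their matrix powers, combined with the elementary observation that powers of a ``banded'' matrix have a bandwidth that grows only linearly. The first step is an auxiliary claim: $[A_c^k]_{ij}=0$ whenever $\dist(i,j)>k$, and $[A_c^k B_c]_{ij}=0$ whenever $\dist(i,j)>k+1$. Both follow by induction on $k$ from the block identity $[A_c^k]_{ij}=\sum_{\ell\in[N]}[A_c^{k-1}]_{i\ell}[A_c]_{\ell j}$: by the induction hypothesis and sparsity of $A_c$, a nonzero summand forces $\dist(i,\ell)\le k-1$ and $\dist(\ell,j)\le 1$, so $\dist(i,j)\le k$ by the triangle inequality; the statement for $A_c^k B_c$ is the same argument with one extra unit of bandwidth contributed by $B_c$.

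Second, I would expand the matrix exponential as the absolutely convergent series $A=e^{\Delta t A_c}=\sum_{k\ge 0}\frac{(\Delta t)^k}{k!}A_c^k$ and pass to the $(i,j)$ block. By the bandwidth claim only the terms with $k\ge\dist(i,j)$ survive, so
\begin{align*}
\|[A]_{ij}\| &\le \sum_{k\ge\dist(i,j)}\frac{(\Delta t\|A_c\|)^k}{k!} \le (\Delta t\|A_c\|)^{\dist(i,j)}\sum_{k\ge0}\frac{(\Delta t\|A_c\|)^k}{k!}\\
&= e^{\Delta t\|A_c\|}\,(\Delta t\|A_c\|)^{\dist(i,j)}.
\end{align*}
Rewriting $(\Delta t\|A_c\|)^{\dist(i,j)}=e^{-(-\ln(\Delta t\|A_c\|))\dist(i,j)}$ yields the claimed $(e^{\Delta t\|A_c\|},-\ln(\Delta t\|A_c\|))$-SED bound for $A$; here one implicitly uses $\Delta t\|A_c\|<1$ so that the rate $-\ln(\Delta t\|A_c\|)$ is positive.

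Third, for $B=\bigl(\int_0^{\Delta t}e^{sA_c}\,ds\bigr)B_c$, integrating the series term by term gives $\int_0^{\Delta t}e^{sA_c}ds=\sum_{k\ge0}\frac{(\Delta t)^{k+1}}{(k+1)!}A_c^k$, hence $[B]_{ij}=\sum_{k\ge\dist(i,j)-1}\frac{(\Delta t)^{k+1}}{(k+1)!}[A_c^kB_c]_{ij}$ by the second bandwidth claim. Bounding each term by $\frac{(\Delta t)^{k+1}\|A_c\|^k\|B_c\|}{(k+1)!}$ (or, more crudely, majorizing the integrand by its value at $s=\Delta t$), factoring out $(\Delta t\|A_c\|)^{\dist(i,j)}$, and closing the remaining tail with the full exponential series exactly as above reproduces the stated SED estimate for $B$ with the same rate $-\ln(\Delta t\|A_c\|)$ and an absolute constant of the form $(\Delta t)^2\|A_c\|\|B_c\|e^{\Delta t\|A_c\|}$, which I would track explicitly through the index shift $k\mapsto k+1$.

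The only genuinely delicate point is this last bookkeeping: the extra factor $B_c$ shifts the surviving range from $k\ge\dist(i,j)$ to $k\ge\dist(i,j)-1$, which is precisely what produces the $1/\|A_c\|$-type correction in the constant for $B$ relative to the constant for $A$. Everything else is a routine termwise domination of an absolutely convergent matrix series by a scalar exponential series, and the standing assumption $\Delta t\|A_c\|<1$ is what makes the resulting geometric-type decay a meaningful SED bound in the sense of Definition \ref{def:SED}.
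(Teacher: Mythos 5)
Your argument follows the paper's proof essentially verbatim: expand $e^{\Delta t A_c}$ (and the integrated series for $B$), observe that only the terms with $k\ge \dist(i,j)$ (resp.\ $k\ge \dist(i,j)-1$) survive because powers of a banded matrix have linearly growing bandwidth, bound termwise by the scalar exponential series, and factor out the appropriate power of $\Delta t\|A_c\|$. Your write-up is in fact slightly more careful than the paper's, since you prove the bandwidth claim for $A_c^k$ and $A_c^kB_c$ by induction and make the implicit requirement $\Delta t\|A_c\|<1$ explicit.

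One caveat on the $B$ part: the ``explicit bookkeeping through the index shift'' you promise will not deliver the constant $(\Delta t)^2\|A_c\|\|B_c\|e^{\Delta t\|A_c\|}$ stated in the proposition. Carrying out the shift exactly, the surviving range $k\ge \dist(i,j)-1$ gives
\[
\|[B]_{ij}\|\;\le\;\sum_{k\ge \dist(i,j)-1}\frac{(\Delta t)^{k+1}\|A_c\|^{k}\|B_c\|}{(k+1)!}\;=\;\frac{\|B_c\|}{\|A_c\|}\sum_{m\ge \dist(i,j)}\frac{(\Delta t\|A_c\|)^{m}}{m!}\;\le\;\frac{\|B_c\|}{\|A_c\|}\,e^{\Delta t\|A_c\|}\,(\Delta t\|A_c\|)^{\dist(i,j)},
\]
so the natural constant is $\frac{\|B_c\|}{\|A_c\|}e^{\Delta t\|A_c\|}$ (equivalently $\Delta t\|B_c\|e^{\Delta t\|A_c\|}$ with the decay applied to $\dist(i,j)-1$), which exceeds the stated constant by a factor $(\Delta t\|A_c\|)^{-2}$. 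The stated constant is actually too small to be correct: for $\dist(i,j)=1$ and $\Delta t\|A_c\|\ll 1$ one has $[B]_{ij}\approx \Delta t\,[B_c]_{ij}$, which can be of order $\Delta t\|B_c\|$, while the claimed bound is of order $(\Delta t)^3\|A_c\|^2\|B_c\|$; the discrepancy traces to a sign slip in the paper's own factoring step, where $(\Delta t\|A_c\|)^{-\dist(i,j)+1}$ appears in place of $(\Delta t\|A_c\|)^{\dist(i,j)-1}$. So your method is sound; just report the (correct, slightly larger) constant it actually produces rather than trying to match the one printed in the proposition. Nothing downstream changes, since the proposition is only used qualitatively to conclude that $A$ and $B$ are SED with rate $-\ln(\Delta t\|A_c\|)$.
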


\end{document}